\documentclass[11pt]{amsart}
\usepackage{amsmath}
\usepackage{amsfonts}
\usepackage{amssymb}
\usepackage{amscd}
\usepackage[arrow,matrix,curve,cmtip,ps]{xy}
\usepackage{graphicx}

\usepackage{framed,color}
\definecolor{shadecolor}{rgb}{1,0.8,0.3}

\hoffset -1.5cm 
\voffset -1cm 
\textwidth 15.5truecm 
\textheight 22.5truecm 

\numberwithin{equation}{section}

\usepackage{amsthm}

\theoremstyle{plain}
\newtheorem{thm}{Theorem}[section] % first one is for numbering, last one is: section-wise
\newtheorem{theorem}[thm]{Theorem}
\newtheorem{proposition}[thm]{Proposition}
\newtheorem{corollary}[thm]{Corollary}
\newtheorem{lemma}[thm]{Lemma}

%Definition style theorems
\theoremstyle{definition}

\newtheorem{remark}[thm]{Remark}

%Remarks and Notes (these are not numbered)
\theoremstyle{remark}
\newtheorem{example}[thm]{Example}
  %for the remarks without numbers

%\newcommand{\proof}{\noindent {\it {Proof. }}}

\newcommand\id{\mathop{\rm id}}

\newcommand{\cl}[1]{\mathcal{#1}}

\newcommand{\bb}[1]{\mathbb{#1}}

\begin{document}

\title{Relative Weak Injectivity for C*-algebras}
\author{Ali S. Kavruk}
%\date{\today}
\thanks{2010 Mathematics Subject Classification. Primary 46L06, 46L07; Secondary 46L05, 47L25, 47L90.}
\thanks{Key words. operator system, tensor product, relative weak injectivity, Arveson Riesz property, Connes' embedding problem.}
\begin{abstract}

We study the order theoretic properties of relative weak injectivity, w.r.i., in short, in the category of C*-algebras. We prove that Arveson's extension theorem, with additional order assumption on the morphisms, is tightly connected with relative weak injectivity. We prove that (2,3)-Riesz-Arveson property, defined below, is equivalent to w.r.i. Likewise tight Riesz interpolation property yields another characterization of w.r.i. We exhibit, with examples, that these C*-algebraic properties fail in general operator systems. Several order theoretic characterization of Connes' embedding problem is given.

\end{abstract}

\maketitle

C$.$ Lance's weak expectation property characterises C*-algebras for which the maximal tensor product behaves injectively \cite{Lance2}, \cite{Lance}. For von Neumann algebras this property coincides with the injectivity and, as pointed out by Effros and Haagerup, for general C*-algebras, this may be viewed as the approximate injectivity for matrix systems \cite{Effros-Haagerup}. Kirchberg's QWEP conjecture states that every C*-algebra is a quotient of a C*-algebra with WEP, which is equivalent to Connes' embedding problem on the embedding of II$_1$-factors into ultrapowers of hyperfinite II$_1$-factor \cite{Kirchberg94}, \cite{Connes}. A C*-algebra $\cl A$ with WEP is also called {\it weakly injective} C*-algebra as the bidual von Neumann algebra $\cl A^{**}$ of a $\cl A$ is injective relative to $\cl A$ \cite{Kirchberg-Presentation}, \cite{Paulsen WEP}. WEP is a categorical concept and indigenous to non-commutative algebra, in fact, in classical function systems (such as Banach spaces or Kadison spaces) WEP-like properties coincide with the categorical nuclearity. This property extensively studied in operator spaces \cite{pisier_intr}, \cite{Haagerup}, non-selfadjoint algebras \cite{Blecher}, operator systems \cite{KPTT Nuclearity} and Hilbert C*-modules \cite{JIAN LIANG}.

\smallskip

A more general concept, namely relative weak injectivity or weak relative injectivity, w.r.i$.$ in short, is introduced by Kirchberg in \cite{Kirchberg94}. For C*-algebras $\cl A \subseteq \cl B$, $\cl A$ is called {\it w.r.i}$.$ in $\cl B$ if the inclusion of $\cl A$ in $\cl B$ admits a conditional expectation from $\cl B$ into $\cl A^{**}$. (Throughout we assume that all the C*-algebras are unital and all C*-algebraic inclusions are unital.) If the larger object $\cl B$ is injective this property coincides with WEP. W.r.i$.$ is a nuclearity related property and characterizes the pair of C*-algebras $\cl A \subseteq \cl B$ for which the projective tensor product behaves injectively in the sense that $\cl A \otimes_{\max} \cl C \subseteq \cl B \otimes_{\max} \cl C$ for every C*-algebra $\cl C$. Moreover, by the standard Lance trick, the later condition can be solely verified by $\cl C =C^*(\bb F_{\infty})$, the (full) group C*-algebra of free group $\bb F_{\infty}$ generated by countably infinite number of generators.

\smallskip

Our main purpose in this paper is to illuminate the connection between relative weak injectivity and classical order theoretic concepts such as tight Riesz interpolation or Riesz decomposition properties. In addition to the non-commutative order of the underlying operator algebras, there is also a non-commutative order on the morphism. More precisely, for completely bounded self-adjoint maps $\varphi_1$ and $\varphi_2$ we shall write $\varphi_1 \leq \varphi_2$ if $\varphi_2- \varphi_1$ is a completely positive map. Arveson's theory of Radon-Nykodim derivatives prescribes the structure of uniformly dominated completely positive maps \cite{Arveson}. Here we shall be concerned with Arveson's extension theory with additional order preservation assumptions on the morphisms, which in turn, is tightly connected with the relative weak injectivity. As a first step we prove the following:

$ $

\begin{theorem}\label{22 RA Prop} Let $\cl C$ be an injective C*-algebra. For $\cl A \subseteq \cl B$ the following hold:
\begin{enumerate}
\item every completely positive maps $\varphi_1, \varphi_2, \varphi : \cl A \rightarrow \cl C$ with $\varphi_1, \varphi_2 \leq \varphi$ have completely positive extensions
 $\tilde \varphi_1, \tilde \varphi_2, \tilde \varphi : \cl B \rightarrow \cl C$, resp.,  with
 $\tilde \varphi_1, \tilde \varphi_2 \leq \tilde \varphi;$
 
 \smallskip
 
 \item every completely positive maps $\varphi_1, \varphi_2, \varphi_3, \varphi_4: \cl A \rightarrow \cl C$ with $\varphi_1 + \varphi_2 = \varphi_3 + \varphi_4$ admit completely positive extensions 
  $\tilde \varphi_1, \tilde \varphi_2, \tilde \varphi_3, \tilde \varphi_4: \cl B \rightarrow \cl C$, resp., with $\tilde \varphi_1 + \tilde \varphi_2  = \tilde \varphi_3 + \tilde \varphi_4.$
\end{enumerate}
\end{theorem}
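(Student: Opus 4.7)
My plan is to first deduce part (2) from part (1), then prove part (1) by using Arveson's extension theorem in combination with a dominated-extension lemma specific to injective targets.

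For part (2), set $\varphi := \varphi_1 + \varphi_2 = \varphi_3 + \varphi_4$. Each $\varphi_i$ is dominated by $\varphi$, since $\varphi - \varphi_i$ equals one of the other three CP maps and is therefore CP. Applying part (1) to the two dominated maps $\varphi_1, \varphi_3 \leq \varphi$ (after relabelling) produces simultaneous CP extensions $\tilde\varphi, \tilde\varphi_1, \tilde\varphi_3 : \cl B \to \cl C$ with $\tilde\varphi_1, \tilde\varphi_3 \leq \tilde\varphi$. Defining $\tilde\varphi_2 := \tilde\varphi - \tilde\varphi_1$ and $\tilde\varphi_4 := \tilde\varphi - \tilde\varphi_3$ yields CP maps (by domination) that extend $\varphi_2$ and $\varphi_4$ respectively, and the identity $\tilde\varphi_1 + \tilde\varphi_2 = \tilde\varphi = \tilde\varphi_3 + \tilde\varphi_4$ holds by construction.

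For part (1), the single-dominated-map case (i.e.\ only $\varphi_1 \leq \varphi$) is immediate. The map $\Psi : \cl A \to \cl C \oplus \cl C$ given by $\Psi(a) = (\varphi_1(a), \varphi(a) - \varphi_1(a))$ is CP since both coordinates are CP. As $\cl C \oplus \cl C$ is injective, Arveson's extension theorem produces a CP extension $\tilde\Psi = (\tilde\varphi_1, \tilde\psi) : \cl B \to \cl C \oplus \cl C$; setting $\tilde\varphi := \tilde\varphi_1 + \tilde\psi$ yields a CP extension of $\varphi$ with $\tilde\varphi \geq \tilde\varphi_1$.

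The two-dominated-map case is the crux. My approach is to first extend $\varphi$ to some CP map $\tilde\varphi : \cl B \to \cl C$ via Arveson, and then separately extend each $\varphi_i \leq \varphi$ as a CP map dominated by $\tilde\varphi$. This reduces the theorem to the following dominated-extension lemma: for injective $\cl C$ and any CP map $\tilde\varphi : \cl B \to \cl C$, every CP map $\chi \leq \tilde\varphi|_{\cl A}$ admits a CP extension $\tilde\chi : \cl B \to \cl C$ with $\tilde\chi \leq \tilde\varphi$. I would prove this lemma by combining the Stinespring dilation $\tilde\varphi(b) = V^* \pi(b) V$ with Arveson's Radon--Nikodym theorem (which yields $T \in \pi(\cl A)'$, $0 \leq T \leq 1$, with $\chi(a) = V^* T \pi(a) V$), and exploiting the injectivity of $\cl C$ via its faithful realization with a conditional expectation $E : B(H) \to \cl C$. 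The main obstacle is precisely this lemma: the naive candidate $\tilde\chi(b) := V^* T^{1/2}\pi(b) T^{1/2} V$ is CP and extends $\chi$, but is not automatically dominated by $\tilde\varphi$ on $\cl B$ because $T$ fails to commute with $\pi(b)$ for $b \notin \cl A$; resolving this requires using the injective structure of $\cl C$ to replace $T$ by a suitable contraction compatible with all of $\pi(\cl B)$.
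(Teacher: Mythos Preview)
Your reduction of (2) to (1) is correct and matches the paper's own remark that the two statements are equivalent. The gap is in your approach to (1).

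The dominated-extension lemma you isolate is false, even for injective $\cl C$. Take $\cl A = \bb C\cdot 1 \subset \cl B = M_2 = \cl C$ and fix $\tilde\varphi = \id_{M_2}$. The map $\chi(\lambda) = \lambda E_{11}$ is CP and satisfies $\chi \leq \tilde\varphi|_{\bb C}$, since $\tilde\varphi(\lambda)-\chi(\lambda) = \lambda E_{22}$. But the minimal Stinespring dilation of $\id_{M_2}$ has commutant $\bb C I$, so by Arveson's Radon--Nikodym theorem every CP map $\tilde\chi \leq \id_{M_2}$ on $M_2$ is of the form $t\cdot\id$ for some $t\in[0,1]$; in particular $\tilde\chi(1) = tI \neq E_{11}$. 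The obstruction you flag (that $T$ need not commute with $\pi(\cl B)$) is therefore fatal, not a technicality to be massaged away using injectivity of $\cl C$. The moral is that $\tilde\varphi$ cannot be chosen first via Arveson and then have $\tilde\varphi_1,\tilde\varphi_2$ slid underneath it; all three extensions must be produced simultaneously.

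The paper achieves this by encoding the triple $(\varphi,\varphi_1,\varphi_2)$ as a single ucp map $\gamma:\cl{NC}_2 \to CB(\cl A,\cl C)$, where $\cl{NC}_2 = \ell_2^\infty \sqcup \ell_2^\infty$, via $e\mapsto\varphi$, $e_i\mapsto\varphi_i$. By the representation of the maximal tensor product this corresponds to a CP map $\Gamma:\cl A\otimes_{\max}\cl{NC}_2 \to \cl C$. The crucial input is that $\cl{NC}_2$ is C*-nuclear (equivalently, $\bb Z_2*\bb Z_2$ is amenable), which forces $\cl A\otimes_{\max}\cl{NC}_2 \subseteq \cl B\otimes_{\max}\cl{NC}_2$ completely order isomorphically for \emph{every} pair $\cl A\subseteq\cl B$; one then Arveson-extends $\Gamma$ once and reads off $\tilde\varphi,\tilde\varphi_1,\tilde\varphi_2$ from the associated $\tilde\gamma:\cl{NC}_2\to CB(\cl B,\cl C)$. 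Your direct-sum trick for the single-dominated case is the $\ell_2^\infty$ version of this, but the jump to two dominated maps genuinely needs the nuclearity of the larger system $\cl{NC}_2$, not a sequential argument.
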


The condition in (2) will be called \textit{$(2,2)$-Riesz-Arveson property} of the extensions.
It is worth mentioning that these C*-algebraic properties may fail in general operator systems. In example \ref{1. example} we will see that the Namioka and Phelps test subsystem does not possess these properties. 
A simple generalization of the above theorem requires an additional assumption, in the following the property in (3) will be called (2,3)-\textit{Riesz-Arveson property}.

\begin{theorem} The following statements are equivalent for $\cl A \subseteq \cl B$:
\begin{enumerate}
 \item $\cl A$ is w.r.i.\! in $\cl B$;
 
 \smallskip
 
 \item for all $m$, every completely positive maps $\varphi_1, \varphi_2, \varphi_2, \varphi : \cl A \rightarrow M_m$ with $\varphi_1, \varphi_2, \varphi_3 \leq \varphi$ have completely positive extensions 
  $\tilde \varphi_1, \tilde \varphi_2, \tilde \varphi_3, \tilde \varphi : \cl B \rightarrow M_m$ with $\tilde \varphi_1, \tilde \varphi_2, \tilde \varphi_3 \leq \tilde \varphi$;
 
 \smallskip
 
 \item for every $m$, every completely positive maps $\varphi_i : \cl A \rightarrow M_m$, $i=1,...,5$, with 
 $$
 \varphi_1 + \varphi_2 = \varphi_3 + \varphi_4 + \varphi_5
 $$ 
 have completely positive extensions $\tilde \varphi_i : \cl B \rightarrow M_m$, $i=1,...,5$, resp., with 
 $$
 \tilde \varphi_1 + \tilde \varphi_2 = \tilde \varphi_3 + \tilde \varphi_4 + \tilde \varphi_5;
$$

\smallskip

\item for every m, every completely positive maps $\varphi_i: \cl A \rightarrow M_m$, $i=1,...,6$, with
$$
\varphi_1 +\varphi_2 = \varphi_3 + \varphi_4 = \varphi_5 + \varphi_6
$$
have completely positive extensions $\tilde \varphi_i: \cl B \rightarrow M_m$, $i=1,...,6$, resp., with
$$
\tilde \varphi_1 + \tilde \varphi_2 = \tilde \varphi_3 + \tilde \varphi_4 = \tilde \varphi_5 + \tilde \varphi_6.
$$  
\end{enumerate}
\end{theorem}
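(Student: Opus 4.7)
The plan is to establish $(1) \Rightarrow (2), (3), (4)$ uniformly via the conditional expectation, observe $(2) \Leftrightarrow (4)$ by an algebraic substitution, and then tackle the converses via Kirchberg's tensor-product reformulation of w.r.i.\ recalled in the introduction.

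For the forward implications, w.r.i.\ supplies a cp conditional expectation $E\colon \cl{B} \to \cl{A}^{**}$ fixing $\cl{A}$ pointwise. Since $M_m$ is reflexive, each cp map $\varphi_i\colon \cl{A} \to M_m$ has a unique normal cp extension $\varphi_i^{**}\colon \cl{A}^{**} \to M_m$, and setting $\tilde{\varphi}_i := \varphi_i^{**} \circ E$ furnishes cp extensions. Both the bidual lift and composition with $E$ are linear and cp-preserving, so every additive identity (as in (3) or (4)) and every order relation (as in (2)) among the $\varphi_i$ transfers verbatim to the $\tilde{\varphi}_i$.

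The algebraic equivalence $(2) \Leftrightarrow (4)$ is an easy substitution. Given $(2)$-data $\varphi_1, \varphi_2, \varphi_3 \leq \varphi$, the positive residuals $\psi_i := \varphi - \varphi_i$ yield the $(4)$-instance $\varphi_1 + \psi_1 = \varphi_2 + \psi_2 = \varphi_3 + \psi_3$, and the common extension value $\tilde{\varphi}$ produced by $(4)$ dominates every $\tilde{\varphi}_i$. Conversely, a $(4)$-instance $\varphi_1 + \varphi_2 = \varphi_3 + \varphi_4 = \varphi_5 + \varphi_6$ satisfies $\varphi_1, \varphi_3, \varphi_5 \leq \varphi := \varphi_1 + \varphi_2$; applying $(2)$ yields simultaneous dominated extensions, after which $\tilde{\varphi}_2 := \tilde{\varphi} - \tilde{\varphi}_1$, $\tilde{\varphi}_4 := \tilde{\varphi} - \tilde{\varphi}_3$, $\tilde{\varphi}_6 := \tilde{\varphi} - \tilde{\varphi}_5$ recover $(4)$.

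The substantive direction is any one of the converses; I sketch $(3) \Rightarrow (1)$. By Kirchberg's characterization, $\cl{A}$ is w.r.i.\ in $\cl{B}$ iff $\cl{A} \otimes_{\max} \cl{C} \subseteq \cl{B} \otimes_{\max} \cl{C}$ for every C*-algebra $\cl{C}$, and by the Lance trick it suffices to take $\cl{C} = C^*(\bb{F}_\infty)$. A matricial cp map on the max tensor product decomposes into a pair of commuting-range cp maps $\alpha\colon \cl{A} \to M_m$ and $\beta\colon \cl{C} \to M_m$; extending the tensor cp map to $\cl{B} \otimes_{\max} \cl{C}$ amounts to producing $\tilde{\alpha}\colon \cl{B} \to M_m$ whose range still commutes with $\beta(\cl{C})$, i.e., lies in the (generally non-injective) subalgebra $\cl{N} := \beta(\cl{C})' \subseteq M_m$. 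Unpacking this commutation via spectral decomposition of the generators of $\cl{N}'$ translates it into a system of additive identities among cp maps $\cl{A} \to M_m$ of Riesz-Arveson type, precisely the data handled by hypothesis $(3)$. The main obstacle is verifying that three-term decompositions — as opposed to higher $(m,n)$-Riesz-Arveson properties — are expressive enough to encode all the simultaneous commutation constraints; this is exactly why $(3)$ is the minimal hypothesis beyond the always-valid $(2,2)$-Arveson property of Theorem~\ref{22 RA Prop}.
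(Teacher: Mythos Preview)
Your forward implication $(1)\Rightarrow(2),(3),(4)$ via $\tilde\varphi_i:=\varphi_i^{**}\circ E$ is correct and in fact more direct than the paper's argument, which instead routes through the tensor inclusion $\cl A\otimes_{\max}\cl{NC}_3\subseteq\cl B\otimes_{\max}\cl{NC}_3$ and the representation of the max tensor product. Your $(2)\Leftrightarrow(4)$ is the same as the paper's.

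The converse, however, has a genuine gap. Your plan for $(3)\Rightarrow(1)$ is to test the max-tensor inclusion against $\cl C=C^*(\bb F_\infty)$, decompose a cp map on $\cl A\otimes_{\max}\cl C$ into commuting-range pieces $\alpha,\beta$, and then ``unpack the commutation via spectral decomposition'' into $(2,3)$-Riesz--Arveson data. This last step is not justified and, as stated, does not work: the commutant $\cl N=\beta(\cl C)'\subseteq M_m$ is an arbitrary multimatrix algebra $\bigoplus_k M_{n_k}$, and forcing $\tilde\alpha(\cl B)\subseteq\cl N$ imposes a whole family of linear constraints on the block components of $\tilde\alpha$, not a single $(2,3)$ identity. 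There is no mechanism in your sketch that reduces these simultaneous constraints, coming from the infinitely many generators of $\bb F_\infty$, to one instance of hypothesis $(3)$.

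The paper avoids this difficulty by \emph{not} testing against $C^*(\bb F_\infty)$ at all. Its Theorem~\ref{main WRI} shows that w.r.i.\ is already detected by the single finite-dimensional operator system $\ell_2^\infty\sqcup\ell_3^\infty$ (or $\cl{NC}_3$): $\cl A$ is w.r.i.\ in $\cl B$ iff $\cl A\otimes_{\max}(\ell_2^\infty\sqcup\ell_3^\infty)\subseteq\cl B\otimes_{\max}(\ell_2^\infty\sqcup\ell_3^\infty)$ completely order isomorphically. Once this reduction is in hand, the universal property of the coproduct makes the correspondence between cp maps $\ell_2^\infty\sqcup\ell_3^\infty\to CB(\cl A,M_m)$ and $(2,3)$-Riesz--Arveson data exact, and the representation of the max tensor product (Theorem~\ref{rep of max}) converts hypothesis $(3)$ directly into the required $m$-order inclusion for every $m$. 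The missing ingredient in your argument is precisely this replacement of $C^*(\bb F_\infty)$ by the small test system; without it, the ``spectral decomposition'' heuristic does not close the loop.
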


We fix a pair of C*-algebras $\cl A \subseteq \cl B$ where $\cl A$ is a unital C*-algebra of $\cl B$. For self-adjoint elements $a_1, a_2$ we shall write $a_1 < a_2$ if $a_2 - a_1 \geq \delta 1$ for some scalar $\delta>0$. We say that $\cl A$ has the {\it $(n,k)$-tight Riesz interpolation property in} $\cl B$, TR($n,k$)-property in short, if the following holds: for every selection of self-adjoint elements $x_1,...,x_n$ and $y_1,...,y_k$ of $\cl A$, if $x_1,...,x_n$ and $y_1,...,y_k$ interpolate in $\cl B$, meaning that there exists a self-adjoint element $b$ of $\cl B$ such that
$$
x_1,...,x_n < b < y_1,...,y_k,
$$
then $x_1,...,x_n$ and $y_1,...,y_k$ interpolate in $\cl A$ (i.e $x_i < a <y_i$ for all $i=1,...,n$, $j=1,...,k$ for some $a \in \cl A_{sa}$). Likewise we say that $\cl A$ has the {\it complete} TR($n,k$){\it-property in} $\cl B$ if $M_m(\cl A)$ has the TR(n,k)-property in $M_m(\cl B)$ for every $m$.

\smallskip

The above definition should be viewed as a non-commutative version of classical Riesz interpolations considered in ordered groups, Kadison spaces, Riesz spaces etc$.$ \cite{Teller} , \cite{NP}, \cite{AT}. Paulsen's example \cite[Sec$.$ 7]{kavruk2012} indicates that interpolation in a larger object is essential. When the larger object $\cl B$ is injective, several WEP-related formulae in terms of Riesz interpolation are obtained in \cite{kavruk2012}. (See also \cite{GS} for a model theoretic approach.) As a first step we will prove that:

\begin{theorem}For any pair $\cl A \subseteq \cl B$ where $\cl A$ is a unital C*-subalgebra of $\cl B$, $\cl A$ has TR(2,2)-property in $\cl B$.
\end{theorem}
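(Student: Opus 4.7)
We wish to find $a \in \cl A_{sa}$ with $x_1, x_2 < a < y_1, y_2$ in the order-unit sense, i.e.\ with $a - x_i$ and $y_j - a$ uniformly bounded below by a positive scalar multiple of $\mathbf{1}$. The hypothesis provides $b \in \cl B_{sa}$ and $\delta > 0$ with $b - x_i \geq \delta \mathbf{1}$ and $y_j - b \geq \delta \mathbf{1}$; since positivity of $\cl A$-elements is intrinsic, this yields $y_j - x_i \geq 2\delta \mathbf{1}$ already in $\cl A$. I would proceed by contradiction via Hahn--Banach separation.

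Assume no strict interpolator exists in $\cl A$. Then the image of the affine map $F : a \mapsto (a - x_1, a - x_2, y_1 - a, y_2 - a)$ from $\cl A_{sa}$ into $\cl A_{sa}^4$ misses the norm-interior of the positive cone $\cl A_+^4$. A standard Hahn--Banach separation produces a nonzero $4$-tuple of positive linear functionals $(\varphi_1, \varphi_2, \psi_1, \psi_2)$ on $\cl A$ such that the associated $\Lambda(p_1, p_2, q_1, q_2) := \varphi_1(p_1) + \varphi_2(p_2) + \psi_1(q_1) + \psi_2(q_2)$ satisfies $\Lambda(F(a)) \leq 0$ for all $a \in \cl A_{sa}$, while $\Lambda \geq 0$ on $\cl A_+^4$. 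Boundedness in $a$ forces the balancing relation $\varphi_1 + \varphi_2 = \psi_1 + \psi_2 =: \Phi$, and the constant inequality reads $\varphi_1(x_1) + \varphi_2(x_2) \geq \psi_1(y_1) + \psi_2(y_2)$.

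To derive a contradiction, lift this decomposition to $\cl B$. The positive functional $\Phi$ on $\cl A$ extends to a positive functional $\widetilde\Phi$ on $\cl B$ by positive Hahn--Banach. Since $\varphi_1 \leq \Phi$ as positive functionals, the dominated extension theorem (the scalar case of the $(1,1)$-Riesz--Arveson property, a classical Radon--Nikodym statement for states) produces a positive $\widetilde\varphi_1 \leq \widetilde\Phi$ on $\cl B$ extending $\varphi_1$; set $\widetilde\varphi_2 := \widetilde\Phi - \widetilde\varphi_1$, which is positive and extends $\varphi_2$. Proceed analogously for $\widetilde\psi_1, \widetilde\psi_2$. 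Evaluating at $b$ and using $b - x_i, y_j - b \geq \delta \mathbf{1}$ in $\cl B$,
$$ \sum_i \varphi_i(x_i) + \delta\, \Phi(\mathbf{1}) \leq \widetilde\Phi(b) \leq \sum_j \psi_j(y_j) - \delta\, \Phi(\mathbf{1}). $$
Combined with the separation inequality, this forces $2\delta\, \Phi(\mathbf{1}) \leq 0$, so $\Phi = 0$; then all $\varphi_i, \psi_j$ vanish, contradicting nontriviality.

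The main technical obstacle is the consistent lifting step: the individual dominated extension $\widetilde\varphi_1 \leq \widetilde\Phi$ is classical, but the crucial point is that its complement $\widetilde\varphi_2 := \widetilde\Phi - \widetilde\varphi_1$ is automatically a positive extension of $\varphi_2$, so that the decomposition $\Phi = \varphi_1 + \varphi_2$ on $\cl A$ lifts to a compatible decomposition of $\widetilde\Phi$ on $\cl B$. Once this lifting is available, the rest of the argument is a matter of evaluating at $b$ and exploiting the $\delta$-gaps guaranteed by the hypothesis.
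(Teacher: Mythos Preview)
Your separation argument is fine, and the final contradiction would indeed follow \emph{if} you had extensions $\tilde\varphi_i,\tilde\psi_j$ with $\tilde\varphi_1+\tilde\varphi_2=\tilde\psi_1+\tilde\psi_2$. The gap is precisely the ``dominated extension theorem'' you invoke: given a \emph{prescribed} positive extension $\widetilde\Phi$ of $\Phi$ to $\cl B$ and $\varphi_1\le\Phi$ on $\cl A$, there need not exist a positive $\widetilde\varphi_1\le\widetilde\Phi$ on $\cl B$ extending $\varphi_1$. A concrete counterexample: take $\cl A$ to be the diagonal in $\cl B=M_2$ and let $\widetilde\Phi$ be the vector state associated with $\xi=(1,1)/\sqrt2$. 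Since $\widetilde\Phi$ is pure, the order interval $[0,\widetilde\Phi]$ in $M_2^*$ is exactly $\{t\widetilde\Phi:0\le t\le1\}$, and its restriction to $\cl A$ is the one-parameter family $\{t\Phi\}$. But $[0,\Phi]$ in $\cl A^*$ is two-dimensional; for instance $\varphi_1(\mathrm{diag}(a,b))=a/2$ satisfies $0\le\varphi_1\le\Phi$ yet has no extension dominated by $\widetilde\Phi$. So the step ``produce $\widetilde\varphi_1\le\widetilde\Phi$ extending $\varphi_1$'' fails, and with it the device of setting $\widetilde\varphi_2:=\widetilde\Phi-\widetilde\varphi_1$.

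What you actually need is the simultaneous lifting $\varphi_1+\varphi_2=\psi_1+\psi_2\ \Rightarrow\ \tilde\varphi_1+\tilde\varphi_2=\tilde\psi_1+\tilde\psi_2$, i.e.\ the $(2,2)$-Riesz--Arveson property (Theorem~\ref{22 RA Prop}(2)). That statement is true, but its proof in the paper already uses the C*-nuclearity of $\ell_2^\infty\sqcup\ell_2^\infty$ (Proposition~\ref{NP_2}); the paper's proof of the present theorem is nothing but the tensor-product reformulation of the same fact via Lemma~\ref{TR & tensor}. So once the gap is filled your argument is essentially dual to the paper's, not an independent route. The moral is that the ``$(1,1)$'' version you cite only lets you choose \emph{both} $\tilde\varphi_1$ and $\tilde\varphi$; it does not let you prescribe the majorant in advance, and that freedom is exactly what prevents you from matching the $\varphi$-side and the $\psi$-side at a common $\tilde\Phi(b)$.
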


In Example \ref{2. example} we will see that this property may fail for general operator systems. The following improves the results in \cite{kavruk2012}:

\begin{theorem}
The following are equivalent for C*-algebras $\cl A \subseteq \cl B:$

\begin{enumerate}

\item $\cl A$ is w.r.i. in $\cl B$;

\item $\cl A$ has the complete TR(2,3)-property in $\cl B$;

\item $\cl A$ has the complete TR(n,k)-property in $\cl B$ for any $n$ and $k$.

\end{enumerate}

\end{theorem}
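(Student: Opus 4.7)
Plan of proof. We argue along the cycle $(3) \Rightarrow (2) \Rightarrow (1) \Rightarrow (3)$. The implication $(3) \Rightarrow (2)$ is immediate, since complete TR$(2,3)$ is the case $(n,k)=(2,3)$ of complete TR$(n,k)$.

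For $(1) \Rightarrow (3)$, assume $\cl A$ is w.r.i.\ in $\cl B$ and fix a conditional expectation $E \colon \cl B \to \cl A^{**}$. Tensoring with $\id_{M_m}$ yields a conditional expectation $M_m(\cl B) \to M_m(\cl A)^{**}$ that fixes $M_m(\cl A)$, so $M_m(\cl A)$ is w.r.i.\ in $M_m(\cl B)$ for every $m$; it therefore suffices to verify the (non-complete) TR$(n,k)$-property for each $(n,k)$. Given $x_i + \delta\,1 \leq b \leq y_j - \delta\,1$ with $x_i, y_j \in \cl A_{sa}$ and $b \in \cl B_{sa}$, the element $c := E(b) \in \cl A^{**}_{sa}$ satisfies the same strict inequalities in $\cl A^{**}$. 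Setting $z_i := c - x_i - (\delta/2)\,1$ and $w_j := y_j - c - (\delta/2)\,1$, these are positive in $\cl A^{**}$ and satisfy the joint decomposition identity $z_i + w_j = y_j - x_i - \delta\,1 \in \cl A$. A Kaplansky-type density argument for the joint positive cone, exploiting the openness of the strict inequalities, produces positive $\tilde z_i, \tilde w_j \in \cl A^+$ with the same decomposition; the element $a := x_1 + (\delta/2)\,1 + \tilde z_1 \in \cl A_{sa}$ is then a strict $\cl A$-interpolant with slack $\delta/2$.

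For $(2) \Rightarrow (1)$, we invoke the preceding theorem characterising w.r.i.\ via Riesz--Arveson properties, in particular its criterion identifying w.r.i.\ with the $(2,3)$-Riesz--Arveson property: every c.p.\ identity $\varphi_1+\varphi_2 = \varphi_3+\varphi_4+\varphi_5$ of maps $\cl A \to M_m$ extends to the same identity on $\cl B$. Given such data, first extend $\psi := \varphi_1+\varphi_2 = \varphi_3+\varphi_4+\varphi_5$ to a c.p.\ map $\tilde\psi \colon \cl B \to M_m$ via Arveson's extension theorem. Arveson's Radon--Nikodym correspondence encodes c.p.\ maps on $\cl B$ dominated by $\tilde\psi$ by positive contractions in the commutant of the Stinespring dilation of $\tilde\psi$, and the prescribed restrictions to $\cl A$ translate, in that commutant, into two lower and three upper bounds on a self-adjoint element. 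This is a TR$(2,3)$-interpolation problem at the appropriate matrix level; the complete TR$(2,3)$-hypothesis provides the required interpolant, and transferring it back through the Radon--Nikodym correspondence yields the desired c.p.\ extensions.

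The main obstacle is the implication $(2) \Rightarrow (1)$: converting an interpolation of elements via complete TR$(2,3)$ into a $(2,3)$-Riesz--Arveson decomposition of c.p.\ maps. The match of the numerical parameters $(2,3)$ on both sides is not accidental---two lower and three upper bounds correspond exactly to two and three summands in the c.p.\ identity. The delicate point is arranging the Arveson--Radon--Nikodym correspondence so that the restrictions to $\cl A$ of the chosen extensions reproduce the prescribed $\varphi_i$, and it is precisely there that the \textit{complete} strength of the TR-hypothesis---interpolation at every matrix level---is indispensable.
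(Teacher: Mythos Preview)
Both nontrivial implications in your cycle contain real gaps. For $(1)\Rightarrow(3)$, pushing $b$ to $c=E(b)\in\cl A^{**}$ is fine, but the ``Kaplansky-type density'' step is the entire content of the claim and is not justified. Your reformulation via $z_i,w_j$ is only a change of variables: the constraints $\tilde z_i,\tilde w_j\ge 0$ with $\tilde z_i+\tilde w_j=y_j-x_i-\delta\,1$ unwind (using $\tilde z_i-\tilde z_1=x_1-x_i$) to asking for $a:=\tilde z_1+x_1\in\cl A_{sa}$ with $x_i\le a\le y_j-\delta\,1$, which is the original interpolation problem again. Kaplansky density yields a bounded net $a_\lambda\to c$ strongly, but strong convergence never forces $a_\lambda$ into a closed order interval (take $a_\lambda=1-2p_\lambda$ with rank-one $p_\lambda\to 0$ strongly). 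More seriously, for $(2)\Rightarrow(1)$ the Radon--Nikodym translation does not produce a TR$(2,3)$ problem. After dilating $\tilde\psi(b)=V^*\pi(b)V$, you must find, for each $i$ separately, an element $T_i\in\pi(\cl B)'\cap[0,1]$ lying in a prescribed \emph{affine coset} of $\pi(\cl A)'$ determined by $\varphi_i$; there is no single self-adjoint unknown bounded below by two elements and above by three, the problems for different $i$ are uncoupled, and in any case the commutant inclusion $\pi(\cl B)'\subseteq\pi(\cl A)'$ runs opposite to $\cl A\subseteq\cl B$ and carries no TR hypothesis. The coincidence of the label ``$(2,3)$'' on both sides is not a mechanism.

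The paper's route bypasses both difficulties by working tensorially. Lemma~\ref{TR & tensor} identifies the complete TR$(n,k)$-property with the complete order embedding
\[
\cl A\otimes_{\max}(\ell_n^\infty\sqcup\ell_k^\infty)\subseteq\cl B\otimes_{\max}(\ell_n^\infty\sqcup\ell_k^\infty),
\]
and Theorem~\ref{main WRI} identifies w.r.i.\ with exactly this same embedding for any $n,k\ge2$ with $n+k\ge5$. With those two lemmas in hand, all three conditions coincide immediately, with no Radon--Nikodym correspondence and no descent from $\cl A^{**}$ needed.
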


Unfortunately we are unable to show if ``complete" can be removed in (2) or (3). However, there are special cases where TR$(n,k)$-property implies complete TR$(n,k)$-property. For example, if we fix an embedding of $C^*(\bb F_{\infty}) $ into a $ B(\cl H)$ then $C^*(\bb F_{\infty})$ has TR(2,3)-property in $B(\cl H)$ if and only if it has the complete TR(2,3)-property in 
$B(\cl H)$. In the following equivalence (1) and (2) improves results in \cite{kavruk2012}:

\begin{theorem} We fix a representation $C^*(\bb F_\infty) \subset B(\cl H)$.  The following are equivalent:

\begin{enumerate} 

\item Connes' embedding problem has an affirmative solution;

\smallskip

\item $C^*(\bb F_\infty)$ has  TR(2,3)-property in $B(\cl H)$;

\smallskip

\item  every positive linear functionals $\varphi_1, \varphi_2, \varphi_3, \varphi : C^*(\bb F_\infty) \rightarrow \bb C$ with $\varphi_1, \varphi_2, \varphi_3 \leq \varphi$ have positive extensions 
  $\tilde \varphi_1, \tilde \varphi_2, \tilde \varphi_3, \tilde \varphi : B(\cl H) \rightarrow \bb C$ with $\tilde \varphi_1, \tilde \varphi_2, \tilde \varphi_3 \leq \tilde \varphi$;

\smallskip

\item every positive linear functionals $\varphi_i : C^*(\bb F_\infty) \rightarrow \bb C$, $i=1,...,5$, with 
$$
\varphi_1 + \varphi_2 = \varphi_3 + \varphi_4 + \varphi_5
$$ 
have positive extensions  $\tilde \varphi_i: B(\cl H) \rightarrow \bb C$, $i=1,...,5$, resp., with $$
\tilde\varphi_1 + \tilde\varphi_2 = \tilde\varphi_3 + \tilde\varphi_4 + \tilde\varphi_5;
$$ 

\item every positive linear functionals $\varphi_i : C^*(\bb F_\infty) \rightarrow \bb C$, $i=1,...,6$, with 
$$
\varphi_1 + \varphi_2 = \varphi_3 + \varphi_4 = \varphi_5 + \varphi_6
$$ 
have positive extensions $\tilde \varphi_i: B(\cl H) \rightarrow \bb C$, $i=1,...,6$, resp., with $$
\tilde\varphi_1 + \tilde\varphi_2 = \tilde\varphi_3 + \tilde\varphi_4 = \tilde\varphi_5 + \tilde\varphi_6;
$$
\item every cp maps $\varphi, \psi : C^*(\bb F_\infty) \rightarrow M_3$ with $tr\circ \varphi = tr \circ \psi$ admit cp extensions $\tilde \varphi$, $\tilde \psi$ on $B(\cl H)$ with $tr\circ \tilde\varphi = tr \circ \tilde\psi$, where $tr$ denotes the trace.
\end{enumerate}
\end{theorem}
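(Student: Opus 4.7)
The plan is to combine Kirchberg's theorem on Connes' embedding problem with the order-theoretic equivalences of w.r.i.\ established in the preceding theorems. First I would reduce (1) to w.r.i.\ of $C^*(\bb F_\infty) \subseteq B(\cl H)$: Kirchberg's theorem asserts that Connes' embedding problem has an affirmative solution if and only if $C^*(\bb F_\infty)$ has WEP, and since $B(\cl H)$ is injective this coincides with w.r.i.\ of $C^*(\bb F_\infty)$ in $B(\cl H)$ for any embedding. This reduction is independent of the chosen representation.

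Granted this, the direction (1) $\Rightarrow$ (2)--(6) is immediate from the complete TR$(2,3)$ and complete $(2,3)$-Riesz--Arveson characterizations of w.r.i.\ already proved in this paper: specialize $m=1$ in (3), (4), (5), and $m=3$ in (6), while (2) is even weaker than its complete form. The substance lies in the converse, proving that each of the scalar (or $M_3$-valued) conditions (2)--(6) already forces w.r.i., i.e., upgrades to the complete matrix form.

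The plan for the converse is to exploit the universal structure of $C^*(\bb F_\infty)$. Since $\bb F_\infty \cong \bb F_\infty \ast \bb F_k$ for every $k$, the algebra absorbs free products with $C^*(\bb F_k)$, and since each $M_m$ is generated by finitely many unitaries there is a unital surjection $C^*(\bb F_k) \twoheadrightarrow M_m$ for $k$ large enough. Using the natural correspondence between cp maps $\varphi : C^*(\bb F_\infty) \to M_m$ and positive functionals on the matrix amplification $M_m(C^*(\bb F_\infty))$, I would encode any matrix Riesz configuration of cp maps $\varphi_i : C^*(\bb F_\infty) \to M_m$ as a scalar Riesz configuration of positive functionals on an enlarged copy of $C^*(\bb F_\infty)$, apply the scalar hypothesis to extend these functionals, and then decode back to matrix cp extensions on $B(\cl H)$ using Arveson's extension theorem together with the injectivity of $B(\cl H)$.

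The main obstacle is the coherent execution of this encode--extend--decode procedure with full preservation of the linear (Riesz) relations. One must arrange the auxiliary representation and embeddings so that the encoding is positive and preserves sums and domination, the scalar extensions descend to matrix cp extensions on the original $B(\cl H)$ and not merely on the enlarged ambient, and the argument is uniform across all Riesz relations appearing in (2)--(6), in particular covering the single equal-trace constraint $tr\circ\varphi = tr\circ\psi$ of (6), which is a Riesz--Arveson linear relation on a pair of cp maps into $M_3$. Once these bookkeeping points are settled, the equivalence of (1)--(6) follows.
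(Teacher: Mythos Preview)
Your reduction of (1) to w.r.i.\ and the easy direction (1) $\Rightarrow$ (2)--(6) are correct. The gap is exactly where you flag it: the decode step of your encode--extend--decode scheme does not go through. Concretely, you pull back the positive functionals on $M_m(C^*(\bb F_\infty))$ through a surjection $\pi: C^*(\bb F_\infty) \twoheadrightarrow M_m(C^*(\bb F_\infty))$, extend on $B(\cl H)$ by the scalar hypothesis, and then need to push forward to get functionals on $M_m(B(\cl H))$ (equivalently, cp maps $B(\cl H)\to M_m$). But $\pi$ has no extension to a map $B(\cl H)\to M_m(B(\cl H))$, and the extended functionals on $B(\cl H)$ need not factor through $M_m(B(\cl H))$ in any way that remembers the original $\varphi_i$. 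The absorption $\bb F_\infty \cong \bb F_\infty * \bb F_k$ by itself does not repair this: it only moves the problem to a different copy of $C^*(\bb F_\infty)$ inside a different $B(\cl H')$, not the one in the hypothesis.

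The paper avoids working directly with maps and instead passes to tensor products. Each of (2)--(6) is rewritten (via the pullback/pushout duality $(\sqcap_i \ell_{n_i}^\infty)^* \cong \sqcup_i \ell_{n_i}^\infty$ and the representation of $\otimes_{\max}$) as the statement that the canonical map $C^*(\bb F_\infty)\otimes_{\max}\cl S_0 \to B(\cl H)\otimes_{\max}\cl S_0$ is an \emph{order} embedding, where $\cl S_0$ is one of $\ell_2^\infty\sqcup\ell_3^\infty$, $\sqcup_{i=1}^3\ell_2^\infty$, or $\ell_3^\infty\sqcup\ell_3^\infty$ (for (6) one first compresses $M_3\sqcap M_3$ to $\ell_3^\infty\sqcap\ell_3^\infty$ via the diagonal expectation). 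Since each $\cl S_0$ has the lifting property, $B(\cl H)\otimes_{\max}\cl S_0 = B(\cl H)\otimes_{\min}\cl S_0$, so the order embedding becomes $C^*(\bb F_\infty)\otimes_{\min}\cl S_0 = C^*(\bb F_\infty)\otimes_{\max}\cl S_0$ at the order level. The upgrade from order to complete order is then done by a separate proposition: if $\cl S\otimes_{\min} C^*(\bb F_\infty) = \cl S\otimes_{\max} C^*(\bb F_\infty)$ order isomorphically, then the equality is complete. Its proof does use the surjection $\pi: C^*(\bb F_\infty)\twoheadrightarrow M_n(C^*(\bb F_\infty))$ together with the local lifting property of $M_n(C^*(\bb F_\infty))$, but inside the tensor product: a finite-dimensional $\cl T_0\subset M_n(C^*(\bb F_\infty))$ is lifted through $\pi$, pushed through the order-level equality, and then mapped forward by $\id\otimes\pi$ on the $\max$ side. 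This is precisely the mechanism your scheme is missing; the lifting happens on the $C^*(\bb F_\infty)$ tensor factor, not on $B(\cl H)$, so no extension of $\pi$ to $B(\cl H)$ is needed.
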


Since the above formulations are independent of a particular embedding if, in particular, we fix a representation  $C^*(\bb F_\infty) \subset B(\cl H)$ in the sense of
Blackadar \cite{Blackadar} (so the inclusion admits a weak expectation on $B(\cl H)$ into the bi-commutant of $C^*(\bb F_\infty)$) then for every 
$$x_1,x_2 < b < y_1,y_2,y_3,
$$
where $x_i, y_j \in C^*(\bb F_\infty)$ for $i=1,2$, $j=1,2,3$ and $b\in B(\cl H)$ and for every finite dimensional Hilbert subspace $\cl H_0 \subset \cl H$ there exists a self-adjoint $a \in  C^*(\bb F_\infty)$ such that that
$$
P_{\cl H_0} x_i P_{\cl H_0} <
P_{\cl H_0} a P_{\cl H_0} < P_{\cl H_0} y_j P_{\cl H_0}, \;\; i=1,2, \; j=1,2,3,
$$
where $P_{\cl H_0}$ is the orthogonal projection onto $\cl H_0$.
\smallskip

\textit{Remark:} Consider the inclusions 
$C^*(\bb F_2) \subset B(\cl H)$, $C^*(\bb F_2) \subset \Pi M_{n(k)}$, $C^*(\bb F_\infty) \subset \Pi M_{n(k)}$ (where second and third inclusions follow from free groups being residually finite dimensional \cite[Chp$.$ 7]{BO}), $C^*(SL(2,\bb Z)) \subset B(\cl H)$, $C^*(G_1 * G_2) \subset B(\cl H)$, where $G_1$, $G_2$ are discrete abelian groups satisfying $|G_1|\geq 2$, $|G_2|\geq 2$ with $|G_1| + |G_2| \geq 5$. Then Connes' embedding problem is affirmative if and only if one of these inclusions have TR(2,3)-property (if and only if all the inclusions have the complete TR(2,3)-property).
Likewise Connes' embedding problem has an affirmative solution if and only one of these inclusions satisfies one of conditions in the above theorem.

\smallskip

An operator subsystem $\cl S \subset \cl B$, where $\cl B$ is a  C*-algebra, is said to have \textit{C*TR(n,k)-property} in $\cl B$ if every selection of $n$ and $k$ self-adjoint elements in $\cl S$ that interpolate in $\cl B$ also interpolate by a self-adjoint  element in $C^*\{\cl S\}$, the C*-subalgebra generated by $\cl S$ in $\cl B$. We say that $\cl S$ has  \textit{complete C*TR(n,k)-property} in $\cl B$ if $M_m(\cl S) \subset M_m(\cl B)$ has \textit{C*TR(n,k)-property} for every $m$.

\begin{theorem} Let $\cl S$ be an operator subsystem of the C*-algebra $\cl B$. Assume that $\cl S$ is a unitary operator subsystem of  $C^*\{\cl S\}$, the C*-algebra generated by $\cl S$ in $\cl B$. Then $\cl S$ has the complete C*TR(n,k)-property in $\cl B$ if and only if $C^*\{\cl S\}$ has complete TR(n,k)-property in $\cl B$.
\end{theorem}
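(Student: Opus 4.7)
The easy direction, that complete TR$(n,k)$ for $C^*\{\cl S\}$ in $\cl B$ implies complete C*TR$(n,k)$ for $\cl S$ in $\cl B$, is immediate: any tuple in $M_m(\cl S)_{sa}$ lies in $M_m(C^*\{\cl S\})_{sa}$, and the TR-interpolator produced by the hypothesis already sits in $M_m(C^*\{\cl S\})$, which is exactly what C*TR$(n,k)$ requires.

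For the converse, my plan is to dualize both interpolation properties, via Hahn--Banach separation, into extension conditions on families of completely positive maps into matrix algebras---the Riesz--Arveson conditions developed in Theorems 1.1--1.4. In the form relevant here, complete TR$(n,k)$ for $C^*\{\cl S\}$ in $\cl B$ is equivalent to: every family $\tilde\varphi_i: C^*\{\cl S\}\to M_m$ satisfying the appropriate $(n,k)$-summation/domination relation admits cp extensions $\tilde\psi_i:\cl B\to M_m$ preserving that relation. The same Hahn--Banach argument, applied to the pair $\cl S\subseteq\cl B$ with the interpolator required to lie only in $C^*\{\cl S\}$, dualizes complete C*TR$(n,k)$ for $\cl S$ in $\cl B$ into the corresponding condition for cp families $\varphi_i:\cl S\to M_m$---where, by Arveson's extension theorem, every cp extension from $\cl S$ to $\cl B$ may in any case be taken to factor through $C^*\{\cl S\}$.

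Given these dual characterizations, the converse direction reduces to a lifting argument. Starting with a Riesz--Arveson family $\tilde\varphi_i: C^*\{\cl S\}\to M_m$, restrict each map to $\cl S$ to obtain a family $\varphi_i: \cl S\to M_m$ satisfying the same relation. By the complete C*TR$(n,k)$-hypothesis (in its dual form), this restricted family extends to a cp family $\psi_i: \cl B\to M_m$ preserving the relation. It remains to adjust the extensions so that their restrictions to $C^*\{\cl S\}$ recover the original maps $\tilde\varphi_i$, which then gives the desired Riesz--Arveson extension from $C^*\{\cl S\}$ to $\cl B$ and hence the complete TR$(n,k)$-property.

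The main obstacle is this last matching step, since Arveson extensions from $\cl S$ to $C^*\{\cl S\}$ are generally non-unique. I plan to overcome it by a convex, weak-$*$ compactness argument within the set of all cp families $\cl B\to M_m$ that extend $\varphi_i$ and satisfy the Riesz--Arveson relation, selecting a point whose restriction to $C^*\{\cl S\}$ coincides with $\tilde\varphi_i$; the feasibility of this selection is itself a Hahn--Banach statement that uses essentially that $\cl S$ unitally generates $C^*\{\cl S\}$ (so that values on $C^*\{\cl S\}$ are controlled by values on $\cl S$ via the relation). This is the technical heart of the proof, and parallels the extension/averaging arguments implicit in the proofs of Theorems 1.1 and 1.2.
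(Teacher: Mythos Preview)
Your easy direction is fine and matches the paper. The gap is in the converse, precisely at the ``matching step'' you flag as the technical heart.

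The dual form of complete C*TR$(n,k)$ is \emph{not} a Riesz--Arveson condition for the pair $\cl S\subseteq\cl B$ alone. Unwinding Proposition~\ref{System TR}, complete C*TR$(n,k)$ says that the induced structures $\tau_1$ (from $\cl B\otimes_{\max}$) and $\tau_2$ (from $C^*\{\cl S\}\otimes_{\max}$) on $\cl S\otimes(\ell_n^\infty\sqcup\ell_k^\infty)$ agree. Dually this says: every cp family $\tilde\varphi_i:C^*\{\cl S\}\to M_m$ satisfying the relation, \emph{after restriction to $\cl S$}, extends to a cp family on $\cl B$ with the relation. It does \emph{not} say that the original $\tilde\varphi_i$ themselves extend. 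So your matching step is exactly the whole content of the implication, not a residual adjustment.

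Your proposed fix---a convex/Hahn--Banach selection, justified by ``$\cl S$ unitally generates $C^*\{\cl S\}$, so values on $C^*\{\cl S\}$ are controlled by values on $\cl S$''---does not work. For completely positive maps this control is simply false: a ucp map $\varphi:C^*\{\cl S\}\to M_m$ sends a unitary $u\in\cl S$ to a contraction, not a unitary, so $\varphi$ is not determined on products by its restriction to $\cl S$. There is no Hahn--Banach or compactness argument that recovers the missing information. The unitarity hypothesis buys rigidity only at the level of $*$-homomorphisms, and that is how the paper uses it.

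Concretely, the paper passes from $\ell_n^\infty\sqcup\ell_k^\infty$ to the C*-algebra $\ell_n^\infty*\ell_k^\infty$ (Theorem~\ref{essentialunitary}), observes that $\cl S\otimes_{\tau_2}(\ell_n^\infty\sqcup\ell_k^\infty)$ is a unitary operator subsystem of $C^*\{\cl S\}\otimes_{\max}(\ell_n^\infty*\ell_k^\infty)$, and invokes the fact (\cite[Prop.~5.6]{kavruk2011}) that a unitary operator subsystem has the ambient C*-algebra as its enveloping C*-algebra. The \emph{rigidity} of the C*-envelope then forces the $*$-homomorphism $C^*\{\cl S\}\otimes_{\max}(\ell_n^\infty*\ell_k^\infty)\to\cl B\otimes_{\max}(\ell_n^\infty*\ell_k^\infty)$ to be injective, which via essentiality and Lemma~\ref{TR & tensor} gives complete TR$(n,k)$. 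This boundary/Hamana-type rigidity is the genuine input replacing your matching step; without it (or an equivalent), the argument does not close.
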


We consider the category of operator systems with fixed states, denoted by $(\cl S,w)$, which may be viewed as non-commutative probability spaces. Our proof uses categorical pullback of these objects. As in the C*-algebra category \cite{pedersen} we define the \textit{pullback} of a collection $\{(\cl S_i, w_i)\}_{i\in I}$ as the operator system
$$
\sqcap_{i} \cl S_i  = \{ (s_i ) \in \Pi_{i} S_i: w_i(s_i) = w_j(s_j) \mbox{ for all } i,j \in I\}
$$
along with the fixed state $w((s_i)) = w_i(s_i)$ for some $i \in I$. The pullback $(\sqcap_{i} \cl S_i,w)$ has the universal property that for every operator system $\cl T$ and completely positive maps $\varphi_i: \cl T \rightarrow \cl S_i$, $i\in I$, with $w_i \circ \varphi = w_j \circ \varphi$ for all $i,j \in I$, there is a unique ucp map $\varphi: \cl T \rightarrow \sqcap \cl S_i$ such that $\varphi \circ Q_i= \varphi_i$ where $Q_i$ is the canonical quotient map from $\sqcap \cl S_i$ onto $\cl S_i$. In particular, for $I = \{1,2\}$ we have the commuting diagram
$$
\xymatrix{
 \cl T \ar[rdd]_{\varphi_2} \ar[rrd]^{\varphi_1}  \ar@{.>}[rd]|-{\varphi} & &   \\
  & \cl S_1\sqcap \cl S_2 \ar[rd]|-{w} \ar@{->>}[r]_{Q_1} \ar@{->>}[d]^{Q_2}   & \cl S_1 \ar[d]^{w_1} \\
 & \cl S_2 \ar[r]_{w_2} & \bb C
}.
$$

Categorical pullback is the dual notion of categorical pushout, or coproducts, of operator systems and have many nuclearity-related stability properties. If we consider the inclusion of $M_2 \sqcap M_3 \subset M_2 \oplus M_3$ via normalized tracial states, then the question whether or not every ucp map from $M_2 \sqcap M_3$ to $C^*(\bb F_\infty)$ has an approximate ucp extension on $M_2 \oplus M_3$ would be equivalent to Connes' embedding problem (follows from Theorem 0.6 of \cite{Kavruk NP}).

\smallskip

We shall be particularly interested in the operator system $\ell_n^{\infty}$ with the fixed state $w$ given by
$
w((a_1,...,a_n)) = (a_1+\cdots + a_n)/n. 
$
Note that $\ell_n^\infty \sqcap \ell_n^\infty$ can be identified with  the operator subsystem
$$
\cl V = \{ (a_1,a_2,...,a_{2n}): \sum_{i=1}^n a_i = \sum_{i=n+1}^{2n} a_i \} \subseteq \ell_{2n}^\infty.
$$
If $\varphi_i $ and $ \psi_i$, $i=1,...,n$ are positive linear functionals on a C*-algebra $\cl A$ with the property that
$$
\varphi_1 + \cdots +\varphi_n = \psi_1 + \cdots +\psi_n
$$
then we obtain a well-defined completely positive map from $\cl A$ to 
$
\ell_n^\infty \sqcap \ell_n^\infty
$
given by $$a\mapsto (\varphi_1(a), ...., \varphi_n(a), \psi_1(a),...,\psi_n(a)).$$
Conversely every completely positive map $\varphi: \cl A \rightarrow \ell_n^\infty \sqcap \ell_n^\infty$ yields positive linear functionals with the above property when evaluated component-wise. We shall use this argument repeatedly in the proofs.

$ $

We start with a brief introduction to operator systems and tensor products. We then outline the duality correspondence of pullbacks and pushouts. We prove, for unital C*-algebras $\{\cl A_i\}$, that the canonical inclusion of pushout $\sqcup_i \cl A_i \subseteq *_i \cl A_i$ constitutes an essential unitary operator system.  (In this paper we use the notation $\sqcup$ rather than $\oplus_1$ for pushouts.) We recall w.r.i$.$ characterizations of Kirchberg then obtain further formulations. In the later section we focus on the proof of the main results and some examples in operator systems.

%\begin{theorem} Let $\cl A$ be a unital C*-subalgebra of $\cl B$. The following are equivalent:
%\begin{enumerate}
%\item $\cl A $ is w.r.i. in $\cl B$;
%\item for all finite dimensional operator system $\cl S$ and for all completely positive (cp) map $\varphi: \cl A \rightarrow \cl S$, $\varphi$ extends to a cp map on $\cl B$;
%\item for every $n$, every cp map $\varphi: M_n \otimes \cl A \rightarrow \ell_2^\infty \sqcap \ell_3^\infty$ extends to a cp map on $M_n \otimes \cl B$.
%\end{enumerate}
%\end{theorem}

\section{Preliminaries}

In this section we establish the terminology and state basic definitions and results that
shall be used throughout the paper. An operator system can be defined concretely as a unital $*$-closed subspace of $B(\cl H)$, bounded linear
transformations acting on a Hilbert space $\cl H$. We refer the reader to \cite{PaulsenBook} for
the abstract characterization of these objects via compatible, strict collection of matricial cones
along with an Archimedean matrix order unit \cite{ChoiEffros77}. A map between operator systems
$ \varphi: \cl S \rightarrow \cl T$ is called completely positive, cp in short, if the $n^{\rm th}$-amplification
$id_n \otimes \varphi: M_n \otimes \cl S \rightarrow M_n \otimes \cl T$ is positive for all $n$.
If $\varphi$ is also unital, i.e. $\varphi(1_{\cl S}) = 1_{\cl T}$, we will say that $\varphi$ is a ucp map.
$\mathfrak{S}_n(\cl S) = \{ \varphi: \cl S \rightarrow M_n: \; \varphi \mbox{ is ucp } \}$ denotes the $n^{\rm th}$ matricial state space of $\cl S$ and CP($\cl S, \cl T$) denotes the cp maps from $\cl S$ to $\cl T$.

\subsection{Duality} The topological dual $\cl S^*$ of an operator system $\cl S$ has a canonical matricial
order structure: first by defining the self-adjoint idempotent $*$ via $f^*(s) = \overline{f(s^*)}$, we declare
$$
(f_{ij}) \in M_n(\cl S^*) \mbox{ is positive if } \cl S \ni s \longmapsto (f_{ij}(s)) \in M_n \mbox{ is a cp map}. 
$$
The collection of the cones of the positive elements $\{M_n(\cl S^*)^+\}_{n=1}^\infty$ forms a strict, compatible matricial order structure on $\cl S^*$.
In general an Archimedean matrix order may fail to exist for this matricially ordered space. If dim$(\cl S) < \infty$ then a faithful state $w$ on $\cl S$ can be assigned as an Archimedean order unit for $\cl S^*$ \cite{ChoiEffros77}.

\smallskip

\textit{Remark.} There are many infinite dimensional operator system $\cl S$ such that $\cl S^*$ has a structure of an operator system, in other words, there exists a state $w$ on $\cl S$ that uniformly dominates all the other positive linear functionals. Recall that an operator space $X$ can be defined concretely as a subspace of $B(\cl H)$ along with the matricial norm structure. Associated to $X$ let $\cl P_X$ be the universal Paulsen system, which can be described concretely by
$$
\cl P_X  = \mbox{span}\{ \left[\begin{array}{ccccccc} 
\ddots & \ddots & \ddots &&&& \\
& y^* & \lambda I & x & 0 & 0 & \cdots \\ 
\cdots & 0 & y^* &  \underline{\lambda I} & x  & 0 & \cdots \\
\cdots  & 0 & 0 & y^* & \lambda I & x  &  \\
 &  & &  & \ddots & \ddots & \ddots
\end{array} \right] : \lambda \in \bb C,\; x, y \in X \} \subseteq B\left(\bigoplus_{i\in \bb Z} \cl H\right)
$$
Note that the super-diagonal inclusion $\xymatrix{
X \ar@{^{(}->}[r]^{i}  & \cl P_X}$ is a complete isometry.  $\cl P_X$ has the following universal property: for every operator system $\cl T$, for every completely contractive map $\varphi: X \rightarrow \cl T$ there exists a unique ucp map $\tilde \varphi: \cl P_X \rightarrow \cl T $ such that $\tilde \varphi \circ i = \varphi$.  (Note that the universal property of $\cl P_X$ is also sufficient to define itself.) We claim that matricially ordered space $\cl P_X^*$ is an operator system if we declare the state $w: \cl P_X \rightarrow \bb C$ that extends $X\ni x \mapsto 0 \in \bb C$ an Archimedean order unit. In fact given a state $\varphi $ on $ \cl P_X$, let $\varphi_0$ be its restriction to $X$. Note that $\varphi_0$ and $-\varphi_0$ are both c.c. maps and that $\varphi + \widetilde{-\varphi_0} = 2 w$, where $\widetilde{-\varphi_0}$ is the state extending $-\varphi_0$. This, in particular, means that $2w$ dominates $\varphi$. So $\cl P_X^*$ is an operator system with unit $w$.

\subsection{Quotients} A subspace $J \subset \cl S$ is called a \textit{kernel} if $J$
is kernel of a ucp map defined from $\cl S$ (equivalently kernel of a cp map).
A kernel is typically a non-unital $*$-closed subspace but these properties, in general, do not characterise a kernel.
A matricial order structure on the algebraic quotient $\cl S/J$ can be defined
by
$$
Q_n = \{ (s_{ij} + J): (s_{ij}) \in M_n(\cl S)^+) \}.
$$
The Archimedeanization process, in other words, completion of the
cones $\{Q_n\}$ relative to order topology induced by $(e +J) \otimes I_n$ (see \cite{Paulsen-Tomforde}, \cite{KPTT Tensor})
yields the operator system quotient $\cl S/J$.
The universal property of the quotient ensures that if $\varphi: \cl S \rightarrow \cl T$
is a ucp map then the induced map $\dot{\varphi}: \cl S/{\rm ker}(\varphi) \rightarrow \cl T$ is again a ucp map \cite{farenick--paulsen2011}.
$\varphi$ is called a \textit{quotient}  (resp.,\textit{ complete quotient}) map if $\dot{\varphi}$ is an order (resp. a complete order)  inclusion.
In particular a surjective ucp map $\varphi$ is completely quotient if and only if the adjoint $\varphi^\dag: \cl T^* \rightarrow \cl S^*$ is a complete order inclusion. We finally remark that a closed, $*$-closed subspace $J$ of an operator system which does not include any positive elements other than $0$ is called a \textit{null-subspace}. In [25] it was proven that every finite dimensional null-subspace is a kernel. (Interestingly we don't know if every null-subspace is a kernel or not.) Consider the operator spaces $X\subseteq Y$. It follows that $Y+Y^*$ is a kernel in $\cl P_X$, the universal Paulsen system of $X$, such that we have canonical complete order isomorphism $\cl P_X / (Y+Y^*) = \cl P_{X/Y}$.

\subsection{Minimal tensor product} For operator systems $\cl S$ and $\cl T$ we define
$$
C_n^{\min} = \{ [x_{ij}] \in M_n(\cl S \otimes \cl T): \; [(\phi \otimes \psi)(x_{ij})] \geq 0 \;
\forall \phi \in \mathfrak{S}_p(\cl S),\; \psi \in \mathfrak{S}_q(\cl T),\; \forall\; p,q  \}.
$$
The collection of cones $\{C_n^{\min}\}_{n=1}^\infty$ forms a strict compatible matricial ordering for the algebraic tensor $\cl S \otimes \cl T$.  
Moreover, $1_{\cl S} \otimes 1_{\cl T}$ is a  Archimedean matricial order unit. Therefore the triplet
$
(\cl S \otimes \cl T, \{C_n^{\min}\}_{n=1}^\infty, 1_{\cl S} \otimes 1_{\cl T})
$
forms an operator system which we call the minimal tensor product of $\cl S$ and $\cl T$ and denote by $\cl S\otimes_{\min} \cl T$. 
We refer the reader to \cite{KPTT Tensor} for details. The minimal tensor product is spatial, injective and functorial. By the representation
of the minimal tensor we mean
CP$(\cl S, \cl T) \cong (\cl S^*\otimes_{\min} \cl T)^+
$
for any operator systems $\cl S$ and $\cl T$ with dim$(\cl S)<\infty$ \cite{kavruk2011}.

\subsection{Maximal tensor product} Let $\cl S$ and $\cl T$ be two operator systems. We define
$$
D_n^{\max} = \{ X^* (S \otimes T) X:  \; S \in M_p(\cl S)^+, \; T \in M_q(\cl T)^+,\; X \mbox{ is }pq\times n \mbox{ matrix}, \; p,q \in \bb N \}.
$$
The collection of the cones $\{D_n^{\max}\}_{n=1}^\infty$ are strict and compatible. Moreover, $1\otimes 1$ is a matricial order unit
for the matrix ordered space $(\cl S \otimes \cl T, \{D_n^{\max}\})$. Nonetheless $1\otimes 1$ may fail to be Archimedean, which can be resolved by
Archimedeanization process. We define
$$
C_n^{\max} = \{  X \in M_n(\cl S \otimes \cl T): X + \epsilon (1\otimes 1)_n \in D_n^{\max} \mbox{ for all } \epsilon >0    \}.
$$
The collection $\{C_n^{\max} \}$ forms a strict, compatible matrix ordering on $\cl S\otimes \cl T$
for which $1\otimes 1$ is an Archimedean matrix order unit. We let $\cl S\otimes_{\max} \cl T$
denote the resulting tensor product. max is functorial and projective \cite{KPTT Tensor}, \cite{Han}.
By the representation of the maximal tensor product we mean the canonical identification
CP$  (\cl S \otimes_{\max} \cl T, \bb C ) \cong$CP$(\cl S, \cl T^*)$. We will need a more general form of this representation. For operator systems $\cl S$ and $\cl T$, $CB(\cl S, \cl T)$ has a matricial order structure: $(\varphi_{ij}) \in M_n(CB(\cl S, \cl T))$ is called \textit{positive} if
$$
\cl S \ni s \; \mapsto \; (\varphi_{ij}(s) ) \in M_n(\cl T) \mbox{ is a cp map}. 
$$
For a linear map $\varphi: \cl S \otimes \cl T \rightarrow \cl R$ we define $\gamma_\varphi : \cl S \rightarrow L(\cl T, \cl R)$ by $\gamma_\varphi(s) (t) = \varphi(s\otimes t)$ where $L(\cl T, \cl R)$ denotes the linear maps from $\cl T$ to $\cl R$.

\begin{theorem}\label{rep of max}
A linear map $\varphi: \cl S \otimes_{\max} \cl T \rightarrow \cl R$ is completely positive if and only if the associated map $\gamma_\varphi: \cl S \rightarrow CB(\cl T, \cl R)$ is completely positive.
\end{theorem}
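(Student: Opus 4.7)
The plan is to unpack the two positivity structures, compare them on pure tensors, and then extend by linearity and Archimedeanization. By definition $(\psi_{ij}) \in M_p(CB(\cl T, \cl R))^+$ says precisely that the assembled map $t \mapsto (\psi_{ij}(t))$ is cp from $\cl T$ into $M_p(\cl R)$. On the other side, $M_n(\cl S \otimes_{\max} \cl T)^+$ is the Archimedean closure of the cone $D_n^{\max}$ generated by elements $X^*(S \otimes T)X$ with $S \in M_p(\cl S)^+$ and $T \in M_q(\cl T)^+$. The bridge between the two descriptions is the identity
\begin{equation*}
\varphi_n\bigl(X^*(S\otimes T)X\bigr) \;=\; X^*\, \bigl(\varphi(s_{ij}\otimes t_{kl})\bigr)_{(i,k),(j,l)}\, X,
\end{equation*}
so in both directions the content reduces to positivity of the matrix $(\varphi(s_{ij}\otimes t_{kl}))$ in $M_{pq}(\cl R)$ whenever $S=(s_{ij})$ and $T=(t_{kl})$ are positive.

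For the forward direction, assume $\varphi$ is cp. Fix $(s_{ij}) \in M_p(\cl S)^+$; to see $(\gamma_\varphi(s_{ij})) \in M_p(CB(\cl T, \cl R))^+$ I need the map $\cl T \ni t \mapsto (\gamma_\varphi(s_{ij})(t)) = (\varphi(s_{ij} \otimes t))$ to be cp. For any $(t_{kl}) \in M_q(\cl T)^+$, the element $S \otimes T = (s_{ij} \otimes t_{kl})_{(i,k),(j,l)}$, viewed in $M_{pq}(\cl S \otimes_{\max} \cl T)$, is a generator of $D_{pq}^{\max}$ (take $X = I_{pq}$). Hence $\varphi_{pq}(S \otimes T) \in M_{pq}(\cl R)^+$, which is exactly the positivity needed, and $\gamma_\varphi$ is cp.

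For the reverse, assume $\gamma_\varphi$ is cp. By the displayed identity, checking that $\varphi_n$ sends $D_n^{\max}$ into $M_n(\cl R)^+$ reduces to positivity of $(\varphi(s_{ij}\otimes t_{kl}))$ in $M_{pq}(\cl R)$ for positive $S$ and $T$; but this is precisely the unpacked statement that $(\gamma_\varphi(s_{ij})) \in M_p(CB(\cl T, \cl R))^+$ evaluated at the positive element $(t_{kl})$. To pass from $D_n^{\max}$ to $C_n^{\max}$, I would use the defining property: for $X \in C_n^{\max}$ one has $X + \epsilon(1\otimes 1)_n \in D_n^{\max}$ for every $\epsilon>0$, so $\varphi_n(X) + \epsilon\,\varphi_n((1\otimes 1)_n) \in M_n(\cl R)^+$. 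Since $\varphi_n((1\otimes 1)_n) = I_n \otimes \gamma_\varphi(1_{\cl S})(1_{\cl T})$ is a fixed positive element, dominated by a scalar multiple of the Archimedean unit of $M_n(\cl R)$, the Archimedean axiom in $\cl R$ forces $\varphi_n(X) \in M_n(\cl R)^+$. The only technical care needed is careful bookkeeping of the block indices $(i,k),(j,l)$ in the key identity and the identification $M_p \otimes M_q \cong M_{pq}$; beyond this I expect no deeper obstacle.
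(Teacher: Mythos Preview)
Your proof is correct and follows essentially the same approach as the paper. The forward direction matches the paper's argument almost verbatim; for the reverse direction, which the paper dismisses as ``similar'' and leaves to the reader, you actually supply the details, including the Archimedeanization step passing from $D_n^{\max}$ to $C_n^{\max}$.
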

\begin{proof} $(\Rightarrow)$ Let $(s_{ij}) \in M_n(\cl S)$ be positive. We wish to show that $(\gamma_\varphi(s_{ij}))$ is a completely positive map, that is, $t \mapsto (\gamma_\varphi(s_{ij})(t)) = (\varphi(s_{ij} \otimes t))  $ is a completely positive map. Let $(t_{pq}) \in M_k(\cl T)^+$ be given. We must show that $((\varphi(s_{ij}\otimes t_{pq}))_{i,j=1}^n)_{p,q=1}^k$ is a positive element of $M_{nk}(\cl R)$. This follows from the definition of maximal tensor product, in fact, $(s_{ij}) \otimes (t_{pq})$ is a positive element of $M_{nk}(\cl S \otimes_{\max} \cl T)$, hence, its image under the ${nk}^{th}$-amplification of $\varphi$ is positive.

The proof of the reverse direction is similar, so we leave it the the reader.
\end{proof}

\subsection{Commuting tensor product} We construct the commuting tensor product $\cl S \otimes_{\rm c} \cl T$ of two operator systems $\cl S$ and $\cl T$ via ucp maps with commuting ranges, that is, the collection of matricial positive cones are defined by
$$
C^{\rm com}_n = \{ X \in M_n(\cl S \otimes \cl T): \mbox{ for all Hilbert spaces } \cl H \mbox{ and for all ucp maps }
$$
$$
\hspace{3cm} \varphi: \cl S \rightarrow B(\cl H), \; \psi: \cl T \rightarrow B(\cl H)  \mbox{ with commuting ranges we have }
$$
$$
(\varphi \otimes \psi)^n (X) \geq 0 \}.
$$
We refer \cite{KPTT Tensor} for basic properties of this tensor product and recall that if one of the tensorant has a structure of a C*-algebra then c and max coincide, that is, for an operator system $\cl S$ and a C*-algebra $\cl A$ we have a canonical complete order isomorphism $\cl S \otimes_{\rm c} \cl A = \cl S \otimes_{\max} \cl A$.

\subsection{Pullback and Pushout of operator systems}
Recall that the pushout, or coproducts, of operator systems are introduced independently in \cite{kerrli} and \cite{fritz}: for a collection of operator systems $\{S_i\}_{i\in I}$ we defined the pushout as an operator system $\sqcup_i S_i$ along with the unital complete order embeddings $j_i: \cl S_i \hookrightarrow \sqcup_i \cl S_i$, $i \in I$, such that the following holds: for every operator system $\cl T$ and for every ucp maps $\varphi_i : \cl S_i \rightarrow \cl T$, $i\in I$, there exists unique ucp map $\varphi: \sqcup_i \cl S_i \rightarrow \cl T $ such that $\varphi_i = \varphi \circ j_i$ for all $i$.

We defined pullback of operator systems $\{(S_i,w_i)\}$ at the introduction. If $\cl S^*$ has a structure of an operator system (for example if dim$(\cl S) < \infty$) then we shall consider $\cl S^*$ a probability space with the faithful state $\hat e$ given by $\hat e (f) = f(e)$.

\begin{theorem}\label{duality} Let $\{\cl S_i\}_{i\in I}$ be a collection of operator systems such that for each $i\in I$, $\cl S_i^*$ is an operator system. Then we have a canonical complete order inclusion
$$
\sqcup_i \cl S_i \hookrightarrow \left( \sqcap_i \cl S_i^*   \right)^* 
$$
where the pullback $\sqcap_i \cl S_i^* $ is performed via states $\hat e_i$. If $I$ is a finite set and $dim(\cl S_i)<\infty$ for all $i$ then $\sqcup_i \cl S_i \cong \left( \sqcap_i \cl S_i^*   \right)^* $ canonically. 
\end{theorem}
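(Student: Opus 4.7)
The plan is to construct $\Phi$ by the universal property of the pushout, verify it is a complete order embedding by dualizing, and then upgrade this to an isomorphism in finite dimensions via a direct dimension count.

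\emph{Construction.} For each $i \in I$, let $Q_i : \sqcap_j \cl S_j^* \to \cl S_i^*$ denote the canonical coordinate projection; this is ucp as the restriction to the pullback of the coordinate projection from the direct sum $\bigoplus_j \cl S_j^*$. Define
$$
\eta_i : \cl S_i \longrightarrow (\sqcap_j \cl S_j^*)^*, \qquad \eta_i(s)((f_j)_j) = f_i(s),
$$
equivalently $\eta_i = Q_i^* \circ \iota_i$, where $\iota_i : \cl S_i \hookrightarrow \cl S_i^{**}$ is the canonical unital complete order embedding (available since $\cl S_i^*$ is an operator system, so at every matrix level a positive element $(s_{kl}) \in M_n(\cl S_i)^+$ yields the cp evaluation map $\cl S_i^* \to M_n$, $f \mapsto (f(s_{kl}))$). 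The pullback constraint $f_i(1_{\cl S_i}) = \hat{e}((f_j))$ shows that $\eta_i$ is unital, and complete positivity is immediate from the composition. Applying the universal property of the pushout \cite{kerrli,fritz} then gives a unique ucp map $\Phi : \sqcup_i \cl S_i \to (\sqcap_j \cl S_j^*)^*$ with $\Phi \circ j_i = \eta_i$ for every $i \in I$.

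\emph{Complete order embedding.} This is the main obstacle. I would realize the pushout concretely as an operator system quotient $\sqcup_i \cl S_i = \bigoplus_i \cl S_i / J$ with $J = \mathrm{span}\{1_{\cl S_i} - 1_{\cl S_j} : i,j \in I\}$, and observe that the annihilator $J^\perp$ inside $\bigoplus_i \cl S_i^*$ coincides linearly with $\sqcap_i \cl S_i^*$, since the defining conditions $f(1_{\cl S_i} - 1_{\cl S_j}) = 0$ are exactly the pullback relations $\hat{e}_i(f_i) = \hat{e}_j(f_j)$. The duality between operator system quotients and operator subsystems (Paulsen--Tomforde, and the dual pairing developed in \cite{KPTT Tensor}) then identifies cp maps $\sqcup_i \cl S_i \to M_n$ with cp maps out of $\bigoplus_i \cl S_i$ that annihilate $J$, which under the dual pairing correspond precisely to positive elements of $M_n(\sqcap_i \cl S_i^*)$. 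Consequently the matrix cone $M_n(\sqcup_i \cl S_i)^+$ is exactly what is detected by positive pairings with $\sqcap_i \cl S_i^*$, which is equivalent to $\Phi$ being a complete order embedding. The delicate point, which is where the main obstacle lies, is controlling the Archimedeanization step in the quotient construction so that the detected cones coincide with the genuine Archimedeanized ones rather than merely contain them.

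\emph{Finite-dimensional case.} Assume $|I| < \infty$ and $\dim \cl S_i < \infty$ for all $i \in I$. The quotient description above yields $\dim(\sqcup_i \cl S_i) = \sum_i \dim \cl S_i - (|I|-1)$, while the pullback, being cut out of $\bigoplus_j \cl S_j^*$ by $|I|-1$ independent linear constraints, satisfies $\dim(\sqcap_j \cl S_j^*) = \sum_j \dim \cl S_j^* - (|I|-1)$. Since $\dim \cl S_i = \dim \cl S_i^*$ in finite dimensions, the two sides agree, so $\Phi$ is a bijective complete order embedding, and hence a complete order isomorphism.
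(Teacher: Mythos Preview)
Your construction of $\Phi$ and the dimension count in the finite case match the paper exactly. The divergence is in the complete order embedding step, and here you have left a genuine gap that you yourself flag: the Archimedeanization of the quotient cones. Your argument identifies $J^\perp$ with the pullback and then wants to conclude that the matricial cones of $\sqcup_i \cl S_i$ are precisely those detected by pairing against $\sqcap_i \cl S_i^*$; but the quotient cones are by definition the Archimedean closures of the image cones, and you have not shown that this closure does not strictly enlarge what the pairing detects. A second, smaller issue is that the quotient realization $\bigoplus_i \cl S_i / J$ you invoke is only established in the literature (and in this paper) for finite $I$, so for arbitrary $I$ your argument would need additional justification of what $\bigoplus_i \cl S_i$ even means as an operator system.

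The paper bypasses both problems by a different and cleaner route: rather than computing cones, it shows that the \emph{image} of $\Phi$ inside $(\sqcap_i \cl S_i^*)^*$ itself satisfies the universal property of the pushout, tested against finite-dimensional $\cl T$ (which suffices). Given ucp maps $\varphi_i:\cl S_i\to\cl T$, one takes adjoints $\varphi_i^\dag:\cl T^*\to\cl S_i^*$, observes they satisfy the pullback constraint $\hat e_i\circ\varphi_i^\dag=\hat e_j\circ\varphi_j^\dag$, assembles them into a single cp map $\cl T^*\to\sqcap_i\cl S_i^*$ via the pullback's universal property, and dualizes once more to obtain the required map out of the image of $\Phi$. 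Uniqueness of the pushout then forces $\Phi$ to be a complete order embedding. This argument never touches the quotient description or its Archimedeanization, and works uniformly in $I$.
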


\begin{proof} Let $Q_i : \sqcap_i \cl S_i^* \rightarrow \cl S_i^*$ be the canonical quotient map, and $Q_i^\dag : \cl S_i^{**} \hookrightarrow  (\sqcap_i \cl S_i^*)^* $ be its adjoint, which is a complete order embedding. We let $Q_{i,0}^\dag$ denote the restriction of $Q_i^\dag$ on $\cl S_i$. Note that $ Q_{i,0}^\dag  $ is ``unital" in the sense that $Q_{i,0}^\dag(e_i)$ is the state $\hat e$ of $\sqcap_i \cl S_i^*$ (formed via amalgamation of the states $\{\hat e_i\}_i$). Consider the Effros system
$$
[\hat e] = \mbox{span}\{ g: \mbox{ $g$ is a positive linear functional with }  g\leq \hat e   \} \subseteq (\sqcap_i \cl S_i^*)^* .
$$
Clearly the image of $Q_{i,0}^\dag$ lies in $[\hat e]$ for all $i$. The universal property of the pushout (or coproducts) ensure that we have a ucp map
$$
\gamma: \sqcup_i \cl S_i \rightarrow [\hat e] \subseteq (\sqcap_i \cl S_i^*)^*.
$$
In order to prove that $\gamma$ is a complete order embedding we will prove that its image in $ (\sqcap_i \cl S_i^*)^*$ has the same universal property that pushout has. Let $\cl T$ be a finite dimensional operator system and, for each $i$, $\varphi_i : \cl S_i \rightarrow \cl T$ be ucp maps. The adjoint map $\varphi_i^\dag: \cl T^* \rightarrow \cl S_i^*$ is a completely positive map such that for all $i$ and $j$ we have $\hat e_i \circ \varphi_i^\dag = \hat e_j \circ \varphi_j^\dag  $. The universal property of the pullback ensures that we have a completely positive map $\varphi: \cl T^* \rightarrow \sqcap_i \cl S_i^*$. If we consider the adjoint $ \varphi^\dag : (\sqcap_i \cl S_i^*)^* \rightarrow \cl T^{**} \cong \cl T $ then we obtained the desired map once we restrict $\varphi^\dag$ on the image of $\gamma$. Since the universal property of pushout can be solely verified by finite dimensional objects we obtain the proof of the first part. If $I$ is a finite set and dim$(\cl S_i)< \infty$ for all $i\in I$ then a dimension count argument proves the last argument in the theorem.
\end{proof}

\subsection{Representation of Pullback and Pushout}

We assume familiarity with universal and enveloping C*-algebra of operator systems, denoted by C*$_u(\cl S)$ and C*$_e(\cl S)$, resp., for an operator system $\cl S$ (the reader may refer to \cite{KW} and \cite{PaulsenBook}). We let $*$ denote the full free product of C*-algebras amalgamated over the unit. For a family of operator systems $\{\cl S_i\}$ we have a canonical C*-algebra isomorphism of 
$$
C^*_u(\sqcup_{i} \cl S_i) \cong *_{i} C_u^*(\cl S_i).
$$
The reader may easily verify this by using the universal property of the pushout and unital free product. Here we wish to prove that a similar inclusion also holds for the enveloping C*-algebras. As the result heavily depends on a result of F$.$ Boca \cite{Boca91}, we repeat his argument: let $\cl C$ and the family $\{\cl A_i\}$ be unital C*-algebras with a common unital C*-subalgebra $\cl B$ and  let, for each $i$,  $E_i : \cl A_i \rightarrow \cl B$ be a positive idempotent. We shall write $\cl A_i = \cl B \oplus \cl A_i^0$, where $\cl A_i^0 =$ker$(E_i)$ and $\oplus$ is the vector space direct sum. Let  $\varphi_i: \cl A_i \rightarrow \cl C $ be completely positive $\cl B$-module map for each $i$. Then there exists a completely positive $\cl B$-module map $\varphi: *_{\cl B} \cl A_i \rightarrow \cl C$ such that $\varphi(a_{i_1} a_{i_2} ...a_{i_n}) = \varphi_{i_1}(a_{i_1}) \varphi_{i_2}(a_{i_2}) \cdots \varphi_{i_n}(a_{i_n}) $ with $a_{i_j} \in \cl A_{i_j}^0$, $j = 1,2,...,n$, $n\in N$, $i_1 \neq i_2 \neq \cdots \neq i_n$.

\begin{proposition} \label{prop enveloping} For a family of operator systems $\{\cl S_i\}_i$ we have a canonical complete order inclusion
$$
\sqcup_i \cl S_i \subseteq *_i C^*_e(\cl S_i).
$$
In particular, for a family of C*-algebras $\{\cl A_i\}$ we have a canonical inclusion $
\sqcup_i \cl A_i \subseteq *_i \cl A_i.
$
\end{proposition}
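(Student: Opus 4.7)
The plan is to first produce the desired ucp map $\gamma : \sqcup_i \cl S_i \to *_i C^*_e(\cl S_i)$ via the universal property of the pushout, and then verify it is a complete order embedding by extending arbitrary ucp maps on the pushout to ucp maps on the free product. For each $i$ the composition $\cl S_i \hookrightarrow C^*_e(\cl S_i) \hookrightarrow *_i C^*_e(\cl S_i)$ is a complete order embedding, hence in particular a ucp map, so the universal property of $\sqcup_i \cl S_i$ produces a canonical ucp map $\gamma$ extending these inclusions.

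To prove that $\gamma$ is a complete order embedding it suffices to show that every ucp map $\varphi : \sqcup_i \cl S_i \to B(\cl H)$ admits a ucp extension $\tilde\varphi : *_i C^*_e(\cl S_i) \to B(\cl H)$ with $\tilde\varphi \circ \gamma = \varphi$. Indeed, granting this, if $[x_{kl}] \in M_n(\sqcup_i \cl S_i)$ satisfies $\gamma^{(n)}([x_{kl}]) \geq 0$ in $M_n(*_i C^*_e(\cl S_i))$, then every $\varphi^{(n)}([x_{kl}]) = \tilde\varphi^{(n)}(\gamma^{(n)}([x_{kl}])) \geq 0$, and since positivity in $M_n(\sqcup_i \cl S_i)$ is determined by evaluation against ucp maps into $B(\cl H)$ this forces $[x_{kl}] \geq 0$. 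To construct $\tilde\varphi$, restrict $\varphi$ to each $\cl S_i \subseteq \sqcup_i \cl S_i$ and apply Arveson's extension theorem on $\cl S_i \subseteq C^*_e(\cl S_i)$ to obtain ucp maps $\varphi_i : C^*_e(\cl S_i) \to B(\cl H)$. Then invoke the Boca result recalled above with $\cl B = \bb C$, $\cl A_i = C^*_e(\cl S_i)$, $\cl C = B(\cl H)$, and positive idempotents $E_i : C^*_e(\cl S_i) \to \bb C$ taken to be any states (the $\bb C$-module property being automatic) to obtain a cp map $\tilde\varphi : *_i C^*_e(\cl S_i) \to B(\cl H)$ whose restriction to each $C^*_e(\cl S_i)$ equals $\varphi_i$. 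By construction $\tilde\varphi \circ \gamma$ coincides with $\varphi$ on every $\cl S_i$, so the uniqueness clause of the pushout's universal property yields $\tilde\varphi \circ \gamma = \varphi$.

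The main subtle point is verifying that Boca's construction delivers a unital map that really does restrict to $\varphi_i$ on each $C^*_e(\cl S_i)$: unitality follows because $\tilde\varphi$ is the identity on the amalgamated subalgebra $\cl B = \bb C$, and the restriction to each $\cl A_i = \bb C \oplus \ker E_i$ reproduces $\varphi_i$ by Boca's defining formula together with linearity. The second statement, the C*-algebra case, is then immediate since $C^*_e(\cl A) = \cl A$ for any unital C*-algebra $\cl A$.
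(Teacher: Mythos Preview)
Your argument is correct and follows essentially the same route as the paper: obtain the canonical map via the pushout universal property, then show it is a complete order embedding by extending an arbitrary ucp map $\varphi:\sqcup_i\cl S_i\to B(\cl H)$ to $*_i C^*_e(\cl S_i)$ using Arveson's extension on each factor followed by Boca's free-product extension with states as the idempotents. Your write-up is in fact slightly more careful than the paper's in justifying why the extension property forces a complete order embedding and in checking that Boca's map is unital and restricts correctly.
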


\begin{proof} Let $j_i: \cl S_i \rightarrow *_i C^*_e(\cl S_i) $ be the canonical inclusion for each $i$ and let $j: \sqcup_i \cl S_i \rightarrow *_i C^*_e(\cl S_i) $ be the ucp map that appears in the definition of pushout. We wish to prove that $j$ is a complete order embedding. Let $\varphi: \sqcup_i \cl S_i \rightarrow B(\cl H)$ be a ucp map. It is sufficient to show that $\varphi$ extends to a ucp map on $ *_i C^*_e(\cl S_i)$. Let $\varphi_i$ be the restriction of $\varphi$ on $\cl S_i$ for each $i$. By Arveson's extension theorem \cite{Arveson}, let $\tilde \varphi_i$ be a ucp extension of $\varphi_i$ on $C_e^*(\cl S_i)$. We can fix a state $f_i$ on $C_e^*(\cl S_i)$, for each $i$, which plays the role of self-adjoint idempotent as in Boca's result. Now by Boca's extension theorem there exists a ucp map $\tilde \varphi : *_i C^*_e(\cl S_i) \rightarrow B(\cl H)$. It is clear that $\tilde \varphi$ is a ucp extension of $\varphi$. This proves our claim.
\end{proof}

We recall couple of facts from \cite{FKPT-discrete}. An operator subsystem $\cl S \subseteq \cl A$ of a C*-algebra $\cl A$ is called \textit{essential} if for C*-algebra $\cl B$ we have a canonical complete order embedding
$$
\cl S \otimes_{\max} \cl B \subseteq \cl A \otimes_{\max} \cl B.
$$
Every operator system is essential in its universal C*-algebra. This notion arises naturally in the study of essential tensor product. We leave the proof of the following fact to the reader:
\begin{proposition} An operator subsystem $\cl S$ of a C*-algebra $\cl A$ is essential if and only if for every operator system $\cl T$ we have a canonical inclusion
$
\cl S \otimes_{\rm c} \cl T \subseteq \cl A \otimes_{\max} \cl T.
$
\end{proposition}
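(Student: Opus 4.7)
The reverse direction $(\Leftarrow)$ is immediate: specialising the hypothesis to $\cl T = \cl B$ an arbitrary C*-algebra and invoking the identification $\cl S \otimes_{\rm c} \cl B = \cl S \otimes_{\max} \cl B$ (and likewise for $\cl A$) that holds whenever one tensorand is a C*-algebra, the inclusion $\cl S \otimes_{\rm c} \cl B \subseteq \cl A \otimes_{\max} \cl B$ becomes $\cl S \otimes_{\max} \cl B \subseteq \cl A \otimes_{\max} \cl B$, which is exactly essentiality.

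For $(\Rightarrow)$, my plan is to factor the desired inclusion through the universal C*-algebra $C^*_u(\cl T)$ using $\cl T \subseteq C^*_u(\cl T)$. The main step is the following lemma: for any operator systems $\cl R$ and $\cl T$, the canonical map
\[
\cl R \otimes_{\rm c} \cl T \;\longrightarrow\; \cl R \otimes_{\max} C^*_u(\cl T),
\]
induced by $\cl T \hookrightarrow C^*_u(\cl T)$, is a complete order embedding. Positivity in $\cl R \otimes_{\rm c} \cl T$ is tested by pairs of ucp maps $\varphi\colon \cl R \to B(\cl H)$ and $\psi\colon \cl T \to B(\cl H)$ with commuting ranges, while positivity in $\cl R \otimes_{\max} C^*_u(\cl T) = \cl R \otimes_{\rm c} C^*_u(\cl T)$ corresponds to pairs $(\varphi, \tilde\psi)$ where $\tilde\psi$ is a $*$-representation of $C^*_u(\cl T)$. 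The universal property of $C^*_u(\cl T)$ gives a bijective correspondence $\psi \leftrightarrow \tilde\psi$, and $\psi(\cl T) \subseteq \varphi(\cl R)'$ upgrades to $\tilde\psi(C^*_u(\cl T)) \subseteq \varphi(\cl R)'$ because the commutant is a von Neumann algebra and therefore contains the C*-algebra generated by $\psi(\cl T)$. Matching cones on the image of $\cl R \otimes \cl T$ then yields the embedding (the matrix levels being handled analogously with $\cl R$ replaced by $M_n(\cl R)$).

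With the lemma in hand, the forward implication assembles as follows. Let $X \in M_n(\cl S \otimes \cl T)$ be positive in $M_n(\cl A \otimes_{\max} \cl T)$. Functoriality of the maximal tensor product applied to $\cl T \hookrightarrow C^*_u(\cl T)$ places $X$ in the positive cone of $M_n(\cl A \otimes_{\max} C^*_u(\cl T))$. Essentiality of $\cl S \subseteq \cl A$, specialised to the C*-algebra $C^*_u(\cl T)$, yields that $X$ is positive in $M_n(\cl S \otimes_{\max} C^*_u(\cl T))$. Finally the lemma, applied with $\cl R = \cl S$, shows that $X$ is positive in $M_n(\cl S \otimes_{\rm c} \cl T)$. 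Together with the elementary fact that the inclusion $\cl S \otimes_{\rm c} \cl T \to \cl A \otimes_{\max} \cl T$ is completely positive (by functoriality of $\otimes_{\rm c}$ together with the coincidence $\otimes_{\rm c} = \otimes_{\max}$ for the C*-algebra $\cl A$), this establishes the complete order embedding.

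The main obstacle is the lemma — more precisely, verifying that the universal extension $\tilde\psi$ of $\psi$ still has range commuting with $\varphi(\cl R)$. Once this commutant observation is in place, everything else is either functoriality of the tensor products or the identification $\otimes_{\rm c} = \otimes_{\max}$ when one tensorand is a C*-algebra, both of which are recorded in the preliminaries.
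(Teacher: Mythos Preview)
The paper does not supply a proof of this proposition --- it explicitly leaves it to the reader --- so there is nothing to compare against. Your argument is correct. The reverse direction is exactly the specialisation you describe, and for the forward direction the factorisation through $C^*_u(\cl T)$ works as written: the lemma you isolate is precisely the fact (recorded in \cite{KPTT Tensor}) that $\cl R \otimes_{\rm c} \cl T$ sits completely order isomorphically inside $C^*_u(\cl R) \otimes_{\max} C^*_u(\cl T)$, and hence inside $\cl R \otimes_{\max} C^*_u(\cl T)$ once one uses that $\cl R$ is essential in its own universal C*-algebra. Your commutant observation --- that the unique $*$-homomorphic extension $\tilde\psi$ of a ucp $\psi\colon \cl T \to B(\cl H)$ still lands in $\varphi(\cl R)'$ because the commutant is a C*-algebra containing $\psi(\cl T)$ --- is exactly the right mechanism, and is all that is needed for the ``if'' half of the lemma (the ``only if'' half being functoriality). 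One minor remark: you phrase the correspondence $\psi \leftrightarrow \tilde\psi$ as a bijection, but you only use (and only need) the surjection from $*$-representations of $C^*_u(\cl T)$ onto ucp maps on $\cl T$; there is no need to realise every ucp map on $C^*_u(\cl T)$ as such an extension.
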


Following \cite{kavruk2011} we say that an operator subsystem $\cl S \subseteq \cl A$ of a C*-algebra $\cl A$ is \textit{unitary} if C$^*\{ \cl S \cap \cl U(\cl A)  \} = \cl A$, that is, the unitaries of $\cl A$ that belongs to $\cl S$ topologically generates $\cl A$ as a C*-algebra.

\begin{theorem}\label{essentialunitary} For a family of unital C*-algebras $\{\cl A_i\}$ the canonical inclusion  $\sqcup_i \cl A_i \subseteq *_i \cl A_i$
 constitutes an essential, unitary operator subsystem. 
 \end{theorem}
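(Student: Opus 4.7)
The plan is to split the theorem into three parts: the complete order inclusion $\sqcup_i \cl A_i \subseteq *_i \cl A_i$, the unitary property, and essentiality. The first part is immediate from Proposition \ref{prop enveloping} applied with $\cl S_i = \cl A_i$ (since $C^*_e(\cl A_i) = \cl A_i$), so I only need to verify the remaining two.

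The unitary property is essentially a book-keeping exercise. Each unital C*-algebra $\cl A_i$ equals the closed linear span of its unitary group, and the canonical embedding $j_i : \cl A_i \hookrightarrow \sqcup_i \cl A_i$ is unital, so every unitary of $\cl A_i$ is sent to a unitary of $*_i \cl A_i$ that lies in $\sqcup_i \cl A_i$. The C*-algebra generated in $*_i \cl A_i$ by these unitaries therefore contains each $j_i(\cl A_i)$, which is a generating family of the free product, and so coincides with $*_i \cl A_i$.

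For essentiality I will invoke the characterisation in the preceding proposition: it suffices to produce, for every operator system $\cl T$, a canonical complete order inclusion
$$\sqcup_i \cl A_i \otimes_{\rm c} \cl T \;\hookrightarrow\; *_i \cl A_i \otimes_{\max} \cl T.$$
This in turn reduces to an Arveson-type extension statement: every ucp map $\Phi:\sqcup_i \cl A_i \otimes_{\rm c} \cl T \rightarrow B(\cl H)$ must extend to a ucp map on $*_i \cl A_i \otimes_{\max} \cl T$, and running this over all $\cl H$ (or equivalently all $M_m$) recovers the complete order embedding. Unpacking the commuting tensor product, $\Phi$ corresponds to a pair of ucp maps $\pi : \sqcup_i \cl A_i \rightarrow B(\cl H)$ and $\psi : \cl T \rightarrow B(\cl H)$ with commuting ranges; by the universal property of the pushout, $\pi$ restricts to ucp maps $\pi_i = \pi \circ j_i : \cl A_i \rightarrow B(\cl H)$.

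To extend $\pi$ across the free product I will apply Boca's theorem (recalled just above Proposition \ref{prop enveloping}) with base algebra $\cl B = \bb C$, choosing an arbitrary state $f_i$ on $\cl A_i$ as the positive idempotent so that $\cl A_i^0 = \ker f_i$. Boca produces a ucp map $\tilde\pi : *_i \cl A_i \rightarrow B(\cl H)$ with the explicit multiplicative formula $\tilde\pi(a_{i_1}\cdots a_{i_n}) = \pi_{i_1}(a_{i_1})\cdots \pi_{i_n}(a_{i_n})$ on reduced words $a_{i_j} \in \ker f_{i_j}$; writing a generic $a \in \cl A_i$ as $f_i(a)\cdot 1 + (a - f_i(a)\cdot 1)$ shows that $\tilde\pi|_{\cl A_i} = \pi_i$, so $\tilde\pi$ does extend $\pi$. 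The crucial observation is that $\tilde\pi(*_i \cl A_i)$ still commutes with $\psi(\cl T)$: each factor $\pi_{i_j}(a_{i_j})$ lies in the range of $\pi$ and therefore already commutes with every element of $\psi(\cl T)$, so the entire product does, and by linearity and density the full range of $\tilde\pi$ commutes with $\psi(\cl T)$. Since $*_i \cl A_i$ is a C*-algebra, the commuting and maximal tensor products agree on $*_i \cl A_i \otimes \cl T$, and the pair $(\tilde\pi,\psi)$ thus assembles into the desired ucp extension $\tilde\Phi : *_i \cl A_i \otimes_{\max} \cl T \rightarrow B(\cl H)$ of $\Phi$. The only delicate point in this scheme is precisely preserving the commuting-ranges hypothesis through Boca's construction, but the explicit word-multiplicative formula makes this transparent.
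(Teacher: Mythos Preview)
Your argument is essentially correct and takes a different, more concrete route than the paper. The paper verifies essentiality straight from the definition: for a unital C*-algebra $\cl B$ it shows that every state on $(\sqcup_i \cl A_i)\otimes_{\max}\cl B$ extends to $(*_i \cl A_i)\otimes_{\max}\cl B$ by passing, via the representation of $\max$, to a cp map $\sqcup_i \cl A_i \to \cl B^{*}$, landing in the Effros system $[w]\cong \pi_w(\cl B)'$ (a von Neumann algebra), and then applying Boca inside that commutant. You instead invoke the $\otimes_{\rm c}$ characterisation of essentiality and apply Boca directly in $B(\cl H)$, using the explicit word formula to check that the extended map still commutes with $\psi(\cl T)$. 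Both proofs hinge on the same mechanism (Boca preserves a commutation hypothesis), but yours avoids the detour through dual spaces and Effros systems, at the cost of needing the auxiliary Proposition on $\otimes_{\rm c}$.

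One point deserves tightening. You write that a ucp map $\Phi:\sqcup_i\cl A_i\otimes_{\rm c}\cl T\to B(\cl H)$ ``corresponds to a pair of ucp maps $\pi,\psi$ with commuting ranges.'' That is not true for an arbitrary $\Phi$ (already for $M_2\otimes_{\rm c} M_2\cong M_4$ most ucp maps into $M_2$ are not of product type). Fortunately you do not need it: by the very definition of the cones $C_n^{\rm com}$, an element $X\in M_n(\sqcup_i\cl A_i\otimes\cl T)$ is positive for $\otimes_{\rm c}$ if and only if $(\varphi\cdot\psi)^{(n)}(X)\ge 0$ for every commuting pair $(\varphi,\psi)$. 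So to prove the complete order embedding it suffices to extend each such $\varphi$ to $\tilde\varphi$ on $*_i\cl A_i$ with range still commuting with $\psi(\cl T)$, exactly what your Boca step accomplishes, and then observe $(\tilde\varphi\cdot\psi)^{(n)}(X)=(\varphi\cdot\psi)^{(n)}(X)$. Rephrase the reduction in those terms and the proof is clean.
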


 \begin{proof} Let $\cl B$ be a unital C*-algebra. Note that we have a canonical ucp map $\left( \sqcup_i \cl A_i  \right) \otimes_{\max} \cl B \rightarrow \left(*_i \cl A_i  \right) \otimes_{\max} \cl B$. This inclusion is an order embedding if and only if every state $f$ on  $\left( \sqcup_i \cl A_i  \right) \otimes_{\max} \cl B$ extends to a state $\tilde f$ on $\left( *_i \cl A_i  \right) \otimes_{\max} \cl B$. By using the representation of the maximal tensor product this is same as saying that every cp map $\varphi : \sqcup_i \cl A_i \rightarrow \cl B^*$ extends to a cp map $\tilde \varphi *i \cl A_i \rightarrow \cl B^*$. (Note that the topological dual $\cl B^*$ is viewed as a matricially ordered space.) By rescaling, if necessary, we may suppose that $\varphi(1)$ is a state in $B^*$, say $w$. Let 
 $$
 [w] = {\rm span}\{ g: 0 \leq g \leq w   \} \subset \cl B^* 
 $$
 be the corresponding Effros system. Note that $[w]$ has a structure of a von Neumann algebra, in fact $[w]$ can be identified with $\pi_w(\cl B)'$ where $\pi_w$ is the unital $*$-homomorphism that appears in the Stinespring representation of $w$. The map $\varphi : \sqcup_i \cl A_i \rightarrow [w]$ may be viewed as a collection of ucp maps $\varphi_i : \cl A_i \rightarrow [w]$. Now, by fixing states $f_i$ on each $\cl A_i$ (which plays the role of self-adjoint idempotens), Boca's result ensures that exists a ucp map $\tilde \varphi : *_i \cl A_i \rightarrow [w]$ such $\tilde \varphi|_{\cl A_i} = \varphi_i$. It is elementary to see that $\tilde \varphi$ is an extension of $\varphi$. This proves that the embedding of $\left( \sqcup_i \cl A_i  \right) \otimes_{\max} \cl B $ in $\left(*_i \cl A_i  \right) \otimes_{\max} \cl B$ is positive. If we replace $\cl B$ by $M_n(\cl B)$ then it follows that we have a complete order embedding. Since the inclusion of $\cl A_i$ in $*_i \cl A_i$ is a C*-algebra morphism  $\sqcup_i \cl A_i \subseteq *_i \cl A$ is a unitary operator subsystem.
\end{proof}

\begin{theorem}[Farenick, Paulsen] $\ell^\infty_n$ is a self-dual operator system, that is, $(\ell^\infty_n)^* \cong \ell^\infty_n$ is unitally completely order isomorphically.
\end{theorem}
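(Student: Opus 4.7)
The plan is to use the canonical duality pairing on $\ell_n^\infty$ to exhibit the isomorphism explicitly and then match matricial cones by direct unwinding of definitions.

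First I would define $\Phi : \ell_n^\infty \to (\ell_n^\infty)^*$ by $\Phi(a)(b) = \sum_{i=1}^n a_i b_i$, which is clearly a linear bijection. Writing $\{e_i\}$ for the standard basis of $\ell_n^\infty$ and $\{e_i^*\}$ for its dual basis, we have $\Phi(1) = \sum_i e_i^* =: w_0$, a faithful positive linear functional on $\ell_n^\infty$. I would take $w_0$ as the Archimedean matrix order unit on $(\ell_n^\infty)^*$; with this choice $\Phi$ is unital by construction.

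The substantive step is matching the matricial cones. Fix $k$ and let $(a^{ij}) \in M_k(\ell_n^\infty)$ with coordinates $a^{ij} = (a^{ij}_1, \ldots, a^{ij}_n)$. The componentwise identification $M_k(\ell_n^\infty) \cong \bigoplus_{l=1}^n M_k$ shows that $(a^{ij}) \geq 0$ if and only if $(a^{ij}_l)_{i,j} \in M_k^+$ for each $l$. On the dual side, by the definition of the matricial order on $\cl S^*$ recalled in Section 1.1, $\Phi_k((a^{ij}))$ is positive in $M_k((\ell_n^\infty)^*)$ if and only if the induced map $b \mapsto (\Phi(a^{ij})(b))_{i,j}$ from $\ell_n^\infty$ to $M_k$ is completely positive. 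Since $\ell_n^\infty$ is an abelian C*-algebra spanned by the minimal projections $\{e_l\}$, positive linear maps out of $\ell_n^\infty$ are automatically cp, and such a map is positive iff it sends each $e_l$ to a positive matrix. Computing $\Phi(a^{ij})(e_l) = a^{ij}_l$, this reduces to the same cone condition, so $\Phi$ is a unital complete order isomorphism.

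The calculation is essentially a tautology once the correct order unit on the dual is fixed; the only conceptual input is the standard fact that positive linear maps out of an abelian C*-algebra are automatically completely positive, and there is no real obstacle beyond careful bookkeeping of the pairing.
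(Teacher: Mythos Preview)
Your argument is correct: the map $\Phi(a)(b)=\sum_i a_ib_i$ is a linear bijection, your computation of the matricial cones via the identification $M_k(\ell_n^\infty)\cong\bigoplus_l M_k$ is accurate, and the reduction to checking positivity on the projections $e_l$ using that positive maps out of commutative C*-algebras are automatically completely positive is exactly the right mechanism. One minor point of convention: the paper elsewhere takes the Archimedean order unit on a finite-dimensional dual to be a faithful \emph{state} (cf.\ Section~1.1 and the later use of $w((a_i))=\frac{1}{n}\sum a_i$), whereas your $w_0=\sum_i e_i^*$ has $w_0(1)=n$; this is harmless, since replacing $\Phi$ by $\frac{1}{n}\Phi$ matches that convention and changes nothing in the complete order isomorphism.

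As for comparison with the paper: there is nothing to compare against. The theorem is simply stated and attributed to Farenick and Paulsen with no proof supplied, so your direct verification stands on its own and fills in what the paper takes as known.
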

So roughly speaking, considering $\ell^\infty_n$ as an operator system rather than an operator space, the classical dual $\ell_n^1$ of functionals is again an operator system which is completely order isomorphic to its predual.

\begin{theorem} We have a complete order embedding of
$$
\left( \ell^\infty_n \sqcap \ell^\infty_k \right)^* \cong \ell^\infty_n \sqcup \ell^\infty_k  \hookrightarrow {\ell^\infty_n}  *  {\ell^\infty_k} \cong C^*(\bb Z_n ) *  C^*(\bb Z_k ) \cong C^*(\bb Z_n * \bb Z_k).
$$
Moreover, the inclusion of $\left( \ell^\infty_n \sqcap \ell^\infty_k \right)^*$ in $C^*(\bb Z_n * \bb Z_k)$ constitutes a unitary operator subsytem.
\end{theorem}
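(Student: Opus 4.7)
The plan is to chain together four canonical identifications, drawing on Theorem \ref{duality}, Theorem \ref{essentialunitary}, and two classical facts about commutative and free-product group C*-algebras.

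The first step is to extract the identification
$$
\left(\ell^\infty_n \sqcap \ell^\infty_k\right)^* \;\cong\; \ell^\infty_n \sqcup \ell^\infty_k
$$
from Theorem \ref{duality} applied to the pair $\{\ell^\infty_n, \ell^\infty_k\}$. The hypothesis applies because $I = \{1,2\}$ is finite, both operator systems are finite-dimensional, and each is self-dual by Farenick--Paulsen. The only point requiring verification is that the Farenick--Paulsen duality $(\ell^\infty_n)^* \cong \ell^\infty_n$ carries the canonical dual state $\hat e_n$ on $(\ell^\infty_n)^*$ (which is what Theorem \ref{duality} uses to form the pullback on the dual side) to the normalized trace $w_n$ on $\ell^\infty_n$ (which is what the paper uses to form $\ell^\infty_n \sqcap \ell^\infty_k$). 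This is a direct computation with the self-dual pairing $\langle a, b \rangle = \tfrac{1}{n}\sum_i a_i b_i$: evaluation of the corresponding functional at the unit $(1,\ldots,1)$ returns $\tfrac{1}{n}\sum_i a_i = w_n(a)$, so $\hat e_n$ pulls back to $w_n$ as required.

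The second step is to invoke Theorem \ref{essentialunitary} with the pair of unital C*-algebras $\ell^\infty_n, \ell^\infty_k$. This simultaneously produces the complete order embedding $\ell^\infty_n \sqcup \ell^\infty_k \hookrightarrow \ell^\infty_n * \ell^\infty_k$ and the conclusion that the image is an essential, unitary operator subsystem of the free product. The codomain is then rewritten using two classical facts: the Fourier/Pontryagin identification $\ell^\infty_n \cong C^*(\bb Z_n)$ for finite abelian groups, and the universal property of full group C*-algebras
$$
C^*(\bb Z_n) * C^*(\bb Z_k) \;\cong\; C^*(\bb Z_n * \bb Z_k),
$$
which follows from the bijective correspondence between unitary representations of $\bb Z_n * \bb Z_k$ and pairs of compatible unitary representations of the factors.

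The unitary-subsystem conclusion transfers across these last identifications automatically, since $*$-isomorphisms of C*-algebras preserve unitaries and preserve the C*-subalgebra generated by a set. There is no serious obstacle in this argument; it is an assembly of Theorem \ref{duality}, Theorem \ref{essentialunitary} and the two classical identifications, and the only subtle bookkeeping is the state-matching in the first step, which ensures that the two natural descriptions of the pullback coincide.
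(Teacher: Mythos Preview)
Your argument is correct and follows essentially the same route as the paper: Theorem \ref{duality} together with Farenick--Paulsen self-duality for the first isomorphism, the inclusion $\sqcup_i \cl A_i \subseteq *_i \cl A_i$ for the embedding, and the two classical group-C*-algebra identifications. The only cosmetic difference is that the paper cites Proposition \ref{prop enveloping} for the complete order embedding and then separately notes the unitary-subsystem property, whereas you pull both from Theorem \ref{essentialunitary}; your added state-matching check (that $\hat e_n$ corresponds to $w_n$ under the self-duality) is a detail the paper leaves implicit.
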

\begin{proof} The first isomorphism follows from Theorem \ref{duality} and the above mentioned result of Farenick and Paulsen. Second inclusion is direct consequence of Proposition \ref{prop enveloping}. It is also well-known that the full group C*-algebra of $\bb Z_n$ is $\ell_n^\infty$. The final identification is a property of group C*-algebras and the reader may refer to \cite[page 149]{pisier_intr}. Finally note that the inclusion $ \ell^\infty_n \sqcup \ell^\infty_k  \hookrightarrow {\ell^\infty_n}  *  {\ell^\infty_k}$ constitute a unitary operator subsystem.
\end{proof}

The following result when $n_i = 2$ was proven in \cite{farenick--paulsen2011}.
 
\begin{corollary} $\sqcup_i \ell^\infty_{n_i} \subseteq *_i \ell_{n_i}^\infty \cong *_i C^*(\bb Z_{n_i}) \cong C^*(*_i \bb Z_{n_i})$ is an essential operator subsystem.
\end{corollary}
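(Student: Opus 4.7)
The plan is to observe that this corollary is essentially an immediate specialization of Theorem \ref{essentialunitary} combined with two standard C*-algebraic identifications, so no substantive new argument is needed.

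First I would note that each $\ell^\infty_{n_i}$ is a unital commutative C*-algebra, so the family $\{\ell^\infty_{n_i}\}_i$ fits the hypotheses of Theorem \ref{essentialunitary}. Applying that theorem directly yields that the canonical inclusion
\[
\sqcup_i \ell^\infty_{n_i} \;\subseteq\; *_i \ell^\infty_{n_i}
\]
is both essential and unitary as an operator subsystem. This is the only nontrivial content; the rest is a pair of identifications on the right-hand side.

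Next I would identify $\ell^\infty_{n_i} \cong C^*(\bb Z_{n_i})$. This is the standard Pontryagin-dual fact that for a finite abelian group $G$, the full group C*-algebra $C^*(G)$ is isomorphic to $C(\widehat G)$, and $\widehat{\bb Z_{n_i}} \cong \bb Z_{n_i}$ has cardinality $n_i$. Finally, I would invoke the standard identification $*_i C^*(G_i) \cong C^*(*_i G_i)$ for discrete groups $G_i$, which follows from the universal property: unital $*$-representations of $*_i C^*(G_i)$ correspond to families of unital $*$-representations of each $C^*(G_i)$, hence to families of unitary representations of each $G_i$, hence to unitary representations of the free product group $*_i \bb Z_{n_i}$. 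Chaining these two isomorphisms gives
\[
*_i \ell^\infty_{n_i} \;\cong\; *_i C^*(\bb Z_{n_i}) \;\cong\; C^*(*_i \bb Z_{n_i}),
\]
which completes the chain of identifications in the statement.

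There is no real obstacle: the substantive work was already absorbed into Theorem \ref{essentialunitary}, whose proof relied on Boca's extension theorem to push ucp maps on $\sqcup_i \cl A_i$ up to the free product, and the remaining identifications are classical. The only small point worth remarking is that the essentiality is with respect to the operator system structure, not just the C*-algebra structure; but this is exactly the content provided by Theorem \ref{essentialunitary}, so no further verification is needed.
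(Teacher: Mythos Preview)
Your proposal is correct and matches the paper's approach: the corollary is stated without proof in the paper precisely because it is an immediate specialization of Theorem \ref{essentialunitary} to the family $\{\ell^\infty_{n_i}\}$, together with the standard identifications $\ell^\infty_n \cong C^*(\bb Z_n)$ and $*_i C^*(G_i) \cong C^*(*_i G_i)$ that the paper records in the preceding theorem.
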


The following result indicates that the maxc tensor product of \cite{JNPPSW} actually coincides with the commuting tensor product.

\begin{corollary} We have a canonical complete order embedding
$$
\sqcup_i \ell^\infty_{n_i} \otimes_{\rm c} \sqcup_i \ell^\infty_{n_i} \subseteq *_i \ell_{n_i}^\infty  \otimes_{\max} *_i \ell_{n_i}^\infty 
$$
\end{corollary}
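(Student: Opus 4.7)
The plan is to chain two applications of the essentiality characterization stated just before Theorem \ref{essentialunitary}, together with the identification $\cl X \otimes_{\rm c} \cl A = \cl X \otimes_{\max} \cl A$ for any operator system $\cl X$ and C*-algebra $\cl A$ (recalled in Section 2.5). The crucial input is the preceding corollary, which tells us that $\sqcup_i \ell^\infty_{n_i}$ sits as an essential operator subsystem of the C*-algebra $*_i \ell_{n_i}^\infty$.

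First I apply the essentiality proposition to enlarge the left tensor factor. Taking $\cl T = \sqcup_i \ell^\infty_{n_i}$ in that proposition produces a complete order embedding
$$
\sqcup_i \ell^\infty_{n_i} \otimes_{\rm c} \sqcup_i \ell^\infty_{n_i} \hookrightarrow *_i \ell_{n_i}^\infty \otimes_{\max} \sqcup_i \ell^\infty_{n_i}.
$$
Since the first tensorand on the right is now a C*-algebra, the symmetry of $\otimes_{\max}$ combined with the coincidence of $\otimes_{\max}$ and $\otimes_{\rm c}$ whenever one factor is a C*-algebra gives a complete order isomorphism
$$
*_i \ell_{n_i}^\infty \otimes_{\max} \sqcup_i \ell^\infty_{n_i} \;\cong\; \sqcup_i \ell^\infty_{n_i} \otimes_{\rm c} *_i \ell_{n_i}^\infty.
$$
Now I apply the essentiality proposition a second time, this time with $\cl T = *_i \ell_{n_i}^\infty$, to enlarge the remaining pushout factor:
$$
\sqcup_i \ell^\infty_{n_i} \otimes_{\rm c} *_i \ell_{n_i}^\infty \hookrightarrow *_i \ell_{n_i}^\infty \otimes_{\max} *_i \ell_{n_i}^\infty.
$$
Composing these two complete order embeddings with the intermediate isomorphism yields the claimed inclusion.

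I do not anticipate any genuine obstacle: the argument is a bookkeeping consequence of already established machinery, and complete order embeddings compose to complete order embeddings so the chain is legitimate. The only point worth verifying explicitly is that the symmetry/identification in the middle is truly a complete order isomorphism, which is immediate from the functoriality and symmetry of both $\otimes_{\rm c}$ and $\otimes_{\max}$.
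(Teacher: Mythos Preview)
Your proof is correct and follows essentially the same approach as the paper's: apply essentiality of $\sqcup_i \ell^\infty_{n_i} \subseteq *_i \ell_{n_i}^\infty$ twice to obtain the chain $\sqcup \otimes_{\rm c} \sqcup \subseteq * \otimes_{\max} \sqcup \subseteq * \otimes_{\max} *$. The paper's second inclusion uses the definition of essential directly (with the C*-algebra $*_i \ell_{n_i}^\infty$ as the other factor), whereas you route through the symmetry isomorphism and invoke the Proposition characterization again; since $\otimes_{\rm c}$ and $\otimes_{\max}$ agree when one tensorand is a C*-algebra, these amount to the same step.
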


\begin{proof} As $\sqcup_i \ell^\infty_{n_i} \subseteq *_i \ell_{n_i}^\infty$ is an essential operator subsystem we have the following inclusions:
$$
\sqcup_i \ell^\infty_{n_i} \otimes_{\rm c} \sqcup_i \ell^\infty_{n_i} \subseteq 
*_i \ell_{n_i}^\infty  \otimes_{\max}  \sqcup_i \ell^\infty_{n_i} 
\subseteq 
*_i \ell_{n_i}^\infty  \otimes_{\max} *_i \ell_{n_i}^\infty .
$$
\end{proof}

\begin{theorem}\label{S vs A} Let $\cl S \subseteq \cl A$ be an essential, unitary operator subsystem of a C*-algebra $\cl A$. Then for every C*-algebras $\cl B_1 \subseteq \cl B_2$ we have
$$
\cl S \otimes_{\max} \cl B_1 \subseteq \cl S \otimes_{\max} \cl B_2 \;\; \Longleftrightarrow \;\;  \cl A \otimes_{\max} \cl B_1 \subseteq \cl A \otimes_{\max} \cl B_2.
$$
\end{theorem}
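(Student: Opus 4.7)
The plan is to prove the two implications separately. For the backward direction, I would use essentiality: if $\beta: \cl A \otimes_{\max} \cl B_1 \hookrightarrow \cl A \otimes_{\max} \cl B_2$ is a complete order embedding, then in the commutative square with canonical map $\alpha: \cl S \otimes_{\max} \cl B_1 \to \cl S \otimes_{\max} \cl B_2$ and the vertical essentiality embeddings $\iota_i : \cl S \otimes_{\max} \cl B_i \hookrightarrow \cl A \otimes_{\max} \cl B_i$, the composition $\iota_2 \circ \alpha = \beta \circ \iota_1$ is a complete order embedding; since $\iota_2$ is completely positive, this forces $\alpha$ itself to be a complete order embedding.

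For the forward direction the plan is to show that every ucp map $\Phi: \cl A \otimes_{\max} \cl B_1 \to B(\cl H)$ extends to a ucp map $\tilde\Phi: \cl A \otimes_{\max} \cl B_2 \to B(\cl H)$, which is equivalent to the sought complete order embedding since $\cl H$ is arbitrary. First I would Stinespring-dilate to write $\Phi = V^* \pi V$ for a unital $*$-representation $\pi: \cl A \otimes_{\max} \cl B_1 \to B(\cl K)$; since this is the maximal tensor product of two C*-algebras, $\pi$ decomposes into a pair of commuting $*$-representations $\pi_A: \cl A \to B(\cl K)$ and $\pi_{B_1}: \cl B_1 \to B(\cl K)$. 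The restriction $\pi|_{\cl S \otimes \cl B_1}$ is then a ucp map $\cl S \otimes_{\max} \cl B_1 \to B(\cl K)$, which by the hypothesis combined with Arveson's extension theorem lifts to a ucp map $\Lambda: \cl S \otimes_{\max} \cl B_2 \to B(\cl K)$.

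The key step is to invoke unitarity. Because $\cl S \otimes_{\max} \cl B_2 = \cl S \otimes_{\rm c} \cl B_2$ (the commuting and maximal tensor products coincide when one tensorand is a C*-algebra), $\Lambda$ is determined by a pair of ucp maps $\sigma : \cl S \to B(\cl K)$ and $\rho : \cl B_2 \to B(\cl K)$ with commuting ranges, and evaluating on $s \otimes 1$ and $1 \otimes b$ forces $\sigma = \pi_A|_{\cl S}$ and $\rho|_{\cl B_1} = \pi_{B_1}$. Since $\cl S$ generates $\cl A$ as a C*-algebra via its unitaries, $\pi_A(\cl A) = C^*\{\pi_A(\cl S)\}$ and hence $\pi_A(\cl S)' = \pi_A(\cl A)'$ in $B(\cl K)$; consequently $\rho(\cl B_2)$, which commutes with $\sigma(\cl S) = \pi_A(\cl S)$, commutes with all of $\pi_A(\cl A)$. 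The pair $(\pi_A, \rho)$ thus has commuting ranges and, again by max $=$ c, induces a ucp map $\Psi: \cl A \otimes_{\max} \cl B_2 \to B(\cl K)$ whose restriction to $\cl A \otimes \cl B_1$ is $\pi$; setting $\tilde\Phi(\cdot) = V^* \Psi(\cdot) V$ then yields the desired ucp extension of $\Phi$.

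The main obstacle in this plan is that the extension $\rho$ of $\pi_{B_1}$ is merely ucp rather than a $*$-representation, so a priori it cannot be paired with the $*$-representation $\pi_A$ to define a map on the maximal tensor product of $\cl A$ and $\cl B_2$. The unitary hypothesis is precisely what resolves this, upgrading commutation with $\pi_A(\cl S)$ to commutation with all of $\pi_A(\cl A)$, which is exactly what the max-commuting tensor characterization over the C*-algebra $\cl A$ requires.
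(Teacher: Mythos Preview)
Your backward direction is fine, and your overall architecture for the forward direction is sound, but there is a genuine gap in the key step. The identity $\cl S \otimes_{\max} \cl B_2 = \cl S \otimes_{\rm c} \cl B_2$ does \emph{not} imply that an arbitrary ucp map $\Lambda$ on this tensor product decomposes as $\sigma \cdot \rho$ with commuting ranges; that equality only says the positive cones agree, so every commuting pair \emph{induces} a ucp map on $\otimes_{\max}$, not conversely. The slice maps $\sigma(s)=\Lambda(s\otimes 1)$ and $\rho(b)=\Lambda(1\otimes b)$ need not commute in general (already for a compression $M_2\otimes_{\max} M_2 \to M_3$ one can produce noncommuting slices). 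So your sentence ``$\rho(\cl B_2)$, which commutes with $\sigma(\cl S)$'' is exactly the unjustified step, and the obstacle you flag in your last paragraph is not the real one.

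The gap is repairable, and the repair is where the unitary hypothesis actually enters. Since $\Lambda$ restricts to the $*$-representation $\pi$ on $\cl S\otimes\cl B_1$, for every unitary $u\in\cl S\cap\cl U(\cl A)$ we have $\Lambda(u\otimes 1)=\pi_A(u)$, a unitary in $B(\cl K)$; hence $u\otimes 1$ lies in the multiplicative domain of $\Lambda$. Then
\[
\pi_A(u)\,\rho(b)=\Lambda\bigl((u\otimes 1)(1\otimes b)\bigr)=\Lambda\bigl((1\otimes b)(u\otimes 1)\bigr)=\rho(b)\,\pi_A(u),
\]
so $\rho(\cl B_2)\subseteq\{\pi_A(u):u\in\cl S\cap\cl U(\cl A)\}'=\pi_A(\cl A)'$. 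Now $(\pi_A,\rho)$ is a $*$-representation paired with a ucp map into its commutant, which does give a ucp map on $\cl A\otimes_{\max}\cl B_2$ extending $\pi$, and your compression by $V$ finishes the job.

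With this correction your route is genuinely different from the paper's. The paper never dilates: it introduces the induced tensor norm $\tau$ on $\cl A\otimes\cl B_1$ coming from $\cl A\otimes_{\max}\cl B_2$, observes that $\cl S\otimes_{\max}\cl B_1$ sits as a unitary operator subsystem inside both $\cl A\otimes_\tau\cl B_1$ and $\cl A\otimes_{\max}\cl B_1$, and then invokes uniqueness and rigidity of the C*-envelope (via \cite{kavruk2011}) to force $\tau=\max$. Your argument is more hands-on and avoids the C*-envelope machinery entirely, trading it for Stinespring plus the multiplicative-domain trick; the paper's argument is shorter but leans on an external structural result.
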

\begin{proof}
We shall only prove the non-trivial direction $\Rightarrow$. Let $\tau$ be the C*-algebra structure on $\cl A \otimes \cl B_1$ arising from the inclusion $\cl A \otimes_{\max} \cl B_2$. Since 
$$
\cl S \otimes_{\max} \cl B_1 \subseteq \cl S \otimes_{\max} \cl B_2 \subseteq \cl A \otimes_{\max} \cl B_2 \supseteq \cl A \otimes_{\tau} \cl B_1
$$
we have a canonical complete order embedding $\cl S \otimes_{\max} \cl B_1 \subseteq \cl A \otimes_{\tau} \cl B_1$. It is not hard to see that  $\cl S \otimes_{\max} \cl B_1 $ is a unitary operator subsystem of $\cl A \otimes_{\tau} \cl B_1$. By Proposition 5.6 of \cite{kavruk2011} it follows that $\cl A \otimes_{\tau} \cl B_1$ must the enveloping C*-algebra of  $\cl S \otimes_{\max} \cl B_1 $. On the other hand the canonical inclusion $\cl S \otimes_{\max} \cl B_1 \subseteq \cl A \otimes_{\max} \cl B_1$ also forms a unitary operator system. Therefore the equivalence of $\tau$ and $\max$ on $\cl A \otimes \cl B_1$ follows from the uniqueness and rigidity of the enveloping C*-algebra. This proves our claim.
\end{proof}

\subsection{Non-commutative polygones, Namioka-Phelps test systems} The pullback and pushout of operator systems we discuss in the previous sections exhibit familiar objects: by a \textit{non-commutative polygon} with $k$-input, $n$-output, denoted by $\cl {NP}_{n,k}$, we mean pushout of $k$-copies of $\ell^\infty_n$. Likewise a \textit{non-commutative cube} with $k$-input (and binary output), denoted by $\cl {NC}_{k}$, is pushout of $k$-copies of $\ell_2^\infty$:
$$
\cl {NP}_{n,k} : = \bigsqcup_{i=1}^k \ell_n^\infty
\;\;\;\;\;\;\;\; 
\cl {NC}_{k} : = \bigsqcup_{i=1}^k \ell_2^\infty.
$$

Note that for any operator systems $\cl S$ and for any positive elements $\{s_x^a\}_{x=1}^n$ with $\sum_x s_x^a = 1$, where the upper index $a$ ranges from 1 to $k$, we have a ucp map $\varphi : \cl {NP}_{n,k} \rightarrow \cl S$ given by $e_x^a \mapsto s_x^a $ where $e_x^a$ is the $x^{th}$ canonical projection that appears in the $a^{th}$ copy of $\ell_n^\infty$.
Regarding $\cl {NC}_{k}$ we identify $e_1 = e_1^1$, $e_2 = e_1^2$,...,$e_k = e_1^k$. Therefore for any operator system $\cl S$ and $0\leq s_1,...,s_k \leq 1$ we have a ucp map from $\cl {NC}_k$ to $\cl S$ given by $e_i \mapsto s_i$, $i=1,..,k$, and $e\mapsto 1$.

\smallskip

The pullback $\cl V = \ell_2^\infty \sqcap \ell_2^\infty \cong \{(a,b,c,d): a+b = c+d\} \subset \ell_4^\infty 
$ is referred as the standard Namioka-Phelps test system \cite{NP}. A generalized version can be given by
$$
\cl V_{n,k} := \rotatebox[origin=c]{180}{$\displaystyle \bigsqcup$}_{i=1}^k \ell_n^\infty.
$$
By the Theorem \ref{duality} and the above mentioned result of Paulsen and Farenick we have complete order isomorphism
$$
\cl {NP}_{n,k}^* \cong \cl V_{n,k}.
$$
These objects naturally appears in the study of nuclearity related properties in positivity and non-commutative order theory, joint quantum probability distributions in Quantum Information Theory \cite{NP}, \cite{Effros}, \cite{JNPPSW}, \cite{FKPT-discrete}  etc. 

\subsection{W.R.I$.$ and nuclearity} Given a pair of C*-algebras $\cl A \subseteq \cl B$, we say that $\cl A$ is \textit{relatively weakly injective} in $\cl B$ (or $\cl A$ has the weak relative injectivity in $\cl B$), $\cl A$ is w.r.i$.$ in $\cl B$ in short, if the canonical inclusion of $\cl A$ into bidual von Neuman algebra $\cl A^{**}$ extends to ucp map on $\cl B$:
$$
\xymatrix{
\cl A \ar@{^{(}->}[rr]^{\hat c} \ar@{_{(}->}[d]_i & & \cl A^{**} \\
\cl B \ar[rru]_{\varphi} & &
}.
$$

We begin with the following characterization:

\begin{theorem}[Kirchberg, \cite{Kirchberg94}] The following statements are equivalent for $\cl A \subseteq B$:
\begin{enumerate}
\item $\cl A$ is w.r.i$.$ in $\cl B$;
\item for every C*-algebra $\cl C$ we have a canonical inclusion $\cl A \otimes_{\max} \cl C \subseteq \cl B \otimes_{\max} \cl C $;
\item we have canonical C*-algebra inclusion $\cl A \otimes_{\max} C^*(\bb F_\infty) \subseteq \cl B \otimes_{\max} C^*(\bb F_\infty)$.
\end{enumerate}
\end{theorem}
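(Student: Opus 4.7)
The overall plan is to close the cycle (1) $\Rightarrow$ (2) $\Rightarrow$ (3) $\Rightarrow$ (1). Only (1) $\Rightarrow$ (2) and (3) $\Rightarrow$ (1) require genuine argument; (2) $\Rightarrow$ (3) is a trivial specialization to the single C*-algebra $\cl C = C^*(\bb F_\infty)$.

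For (1) $\Rightarrow$ (2), I would start from a cp extension $E : \cl B \to \cl A^{**}$ of the inclusion $\cl A \hookrightarrow \cl A^{**}$ supplied by (1). Functoriality of $\otimes_{\max}$ produces a cp map $E \otimes \id_{\cl C} : \cl B \otimes_{\max} \cl C \to \cl A^{**} \otimes_{\max} \cl C$, and precomposing with the canonical map $\iota : \cl A \otimes_{\max} \cl C \to \cl B \otimes_{\max} \cl C$ recovers the canonical map $j : \cl A \otimes_{\max} \cl C \to \cl A^{**} \otimes_{\max} \cl C$. I would then invoke the classical fact that $j$ is a complete order embedding: every commuting-representation pair realizing $\cl A \otimes_{\max} \cl C$ on some $B(\cl K)$ normally extends on the first factor to $\cl A^{**}$, and weak-$*$ continuity preserves commutation, so every $*$-representation of $\cl A \otimes_{\max} \cl C$ lifts to $\cl A^{**} \otimes_{\max} \cl C$. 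Since $(E \otimes \id_{\cl C}) \circ \iota = j$ is a complete order embedding and both factors are cp, $\iota$ itself is a complete order embedding (if $\iota(x)\ge 0$ then $j(x)\ge 0$, whence $x\ge 0$).

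The main work is (3) $\Rightarrow$ (1), and my plan here is a commuting-representation construction. Fix a faithful normal representation $\cl A^{**} \subseteq B(\cl H)$ (e.g., the universal one of $\cl A$) and set $\cl D := (\cl A^{**})' \subseteq B(\cl H)$. Elementwise commutation of $\cl A$ and $\cl D$ gives a $*$-homomorphism $\mu : \cl A \otimes_{\max} \cl D \to B(\cl H)$, $a \otimes d \mapsto ad$. Choose a surjection $q : C^*(\bb F_I) \twoheadrightarrow \cl D$ from a sufficiently large free group C*-algebra and form $\nu := \mu \circ (\id_{\cl A} \otimes q) : \cl A \otimes_{\max} C^*(\bb F_I) \to B(\cl H)$, which extends the inclusion of $\cl A$ on the first tensorand. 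A Lance-trick reduction replaces $\bb F_I$ by $\bb F_\infty$, after which hypothesis (3) combined with Arveson's extension theorem (via injectivity of $B(\cl H)$) yields a cp extension $\Phi : \cl B \otimes_{\max} C^*(\bb F_\infty) \to B(\cl H)$ of $\nu$. By Theorem \ref{rep of max}, $\Phi$ corresponds to commuting-range ucp maps $\rho : \cl B \to B(\cl H)$ and $\sigma : C^*(\bb F_\infty) \to B(\cl H)$. The restriction $\rho|_{\cl A}$ coincides with the inclusion, and since $\sigma(C^*(\bb F_\infty))$ generates $\cl D$ (as a strong-operator closure of its image under $q$), the range of $\rho$ commutes with $\cl D$ and thus lies in $\cl D' = \cl A^{**}$; this $\rho$ is the desired weak expectation.

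The principal obstacle will be the Lance-trick bookkeeping: justifying the reduction from an arbitrary $\bb F_I$ to the fixed $\bb F_\infty$ appearing in (3), and ensuring that the commutation with $\cl D$ survives the reduction. I would handle this locally---only finitely many elements of $\cl D$ are tested at any moment, and each such finite set lifts through a copy of $C^*(\bb F_\infty)$---and then extract a weak-$*$ cluster point of the resulting cp maps $\cl B \to B(\cl H)$ using weak-$*$ compactness of the unit ball of $\mathrm{CP}(\cl B, B(\cl H))$. A further care is needed to check that the commutation $[\rho(\cl B), \cl D] = 0$ is preserved in the limit, which follows because the commutator relations are testable on finitely many vectors and hence are stable under pointwise weak-$*$ convergence on bounded nets.
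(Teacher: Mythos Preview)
The paper does not give its own proof of this theorem: it is quoted as a result of Kirchberg, and the proof environment immediately following belongs to the subsequent Remark (replacing $C^*(\bb F_\infty)$ by other group C*-algebras), not to the theorem. Your (1) $\Rightarrow$ (2) is correct and is precisely the argument the paper later uses for the stronger implication (1) $\Rightarrow$ (2) of Theorem~\ref{main WRI}, invoking the complete order embedding $\cl A\otimes_{\max}\cl C\subseteq \cl A^{**}\otimes_{\max}\cl C$ and factoring through it.

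Your outline for (3) $\Rightarrow$ (1) is the standard Lance commutant trick and is essentially sound, but there is a genuine gap at one step. You assert that ``by Theorem~\ref{rep of max}, $\Phi$ corresponds to commuting-range ucp maps $\rho:\cl B\to B(\cl H)$ and $\sigma:C^*(\bb F_\infty)\to B(\cl H)$''. Theorem~\ref{rep of max} does not say this: it only identifies cp maps out of $\cl S\otimes_{\max}\cl T$ with cp maps $\cl S\to CB(\cl T,\cl R)$, and a generic ucp map on a max tensor product of C*-algebras does \emph{not} split as a product of commuting-range ucp maps (consider a partial trace $M_2\otimes M_2\to M_2$). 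What rescues your argument is a feature you have not invoked: since $1\in\cl A$, the slice $1\otimes C^*(\bb F_\infty)$ lies in the domain of $\nu$, so $\sigma(c):=\Phi(1\otimes c)=\nu(1\otimes c)$ is the $*$-homomorphism $q$ followed by the inclusion $\cl D\hookrightarrow B(\cl H)$. Hence $1\otimes C^*(\bb F_\infty)$ sits in the multiplicative domain of the ucp map $\Phi$, and Choi's multiplicative-domain theorem yields $\Phi(b\otimes c)=\Phi(b\otimes 1)\,\sigma(c)=\sigma(c)\,\Phi(b\otimes 1)$, giving the commutation $[\rho(\cl B),\cl D]=0$ you need. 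With this correction the remainder of your plan (the finite-subset reduction from $\bb F_I$ to $\bb F_\infty$ and the weak-$*$ cluster-point extraction) goes through.
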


\begin{remark} In the above theorem $C^*(\bb F_\infty)$ can be replaced by 
\begin{enumerate}
\item $C^*(\bb F_n)$ where $n\geq 2$;
\item $C^*(\bb Z_n * \bb Z_k) \cong \ell_n^\infty * \ell_k^\infty$, for any $n,k \geq 2$ with $n+k\geq 5$,
\item  $C^*(*_{i\in I} \bb Z_{n_i}  ) \cong *_{i\in I} \ell^\infty_{n_i}$ with $|I|\geq 3$ and $n_i \geq 2$ for at least three distinct $i$.
\end{enumerate}
\end{remark}

\begin{proof} We let $G$ be the set of the discrete groups $\bb F_n$, $n\geq 2$, $\bb Z_n * \bb Z_k$ with $n,k \geq 2$ provided $n+k\geq 5$ and $*_{i\in I}\bb Z_{n_i}$ provided $n_i\geq 2$ for at least three distinct values of $i$. It is well known that for any group $\bb G \in G$ we have a group inclusion $\bb F_\infty \hookrightarrow \bb G$. (For example the reader may refer to \cite{Harpe} for an excellent source.) This implies, by Section 8 of \cite{pisier_intr}, that we have a canonical C*-algebra inclusion $i:C^*(\bb F_\infty) \hookrightarrow C^*(\bb G)$ with a ucp inverse, say $\varphi$. Now given a pair of C*-algebras $\cl A \subseteq \cl B$ if we have a C*-algebra inclusion  $\cl A \otimes_{\max} C^*(\bb G) \subseteq \cl B \otimes_{\max} C^*(\bb G)$ then it follows that  $\cl A \otimes_{\max} C^*(\bb F_\infty) \subseteq \cl B \otimes_{\max} C^*(\bb F_\infty)$. In fact the following composition
$$
\cl A \otimes_{\max} C^*(\bb F_\infty) \xrightarrow{id \otimes i}  \cl A \otimes_{\max} C^*(\bb G) \xrightarrow{id \otimes \varphi}  \cl A \otimes_{\max} C^*(\bb F_\infty),
$$
which is identity, implies that the first map is a C*-algebra inclusion (recall that if the composition of two ucp maps is a complete order embedding then this is the case for the first map). Likewise we have a C*-algebra inclusion $\cl B \otimes_{\max} C^*(\bb F_\infty) \subseteq  \cl B \otimes_{\max} C^*(\bb G) $ given by $id\otimes i$. Since $\cl A \otimes_{\max} C^*(\bb G) \subseteq \cl B \otimes_{\max} C^*(\bb G)$ it follows that $\cl A \otimes_{\max} C^*(\bb F_\infty) \subseteq \cl B \otimes_{\max} C^*(\bb F_\infty)$.
\end{proof}

\begin{theorem}\label{main WRI} Let $\mathfrak S$ be the set of all the operator systems of the form $\ell_n^\infty \sqcup \ell^\infty_k$ with $n,k\geq 2$ provided $n+k\geq 5$ and $\sqcup_{i\in I} \ell^\infty_{n_i}$ with $|I|\geq 3$ and $n_i\geq 2$ for at least three distinct $i$. Let $\cl A \subseteq \cl B$ be given. The following are equivalent:
\begin{enumerate}
\item $\cl A$ is w.r.i$.$ in $\cl B$;
\item for all operator system  $\cl S$ we have a canonical inclusion $\cl A \otimes_{\max} \cl S \subseteq \cl B \otimes_{\max} \cl S $;
\item there exists an operator system $\cl S_0 \in \mathfrak S$ such that we have complete order inclusion $$
\cl A \otimes_{\max} \cl S_0 \subseteq \cl B \otimes_{\max} \cl S_0.
$$
\end{enumerate}
\end{theorem}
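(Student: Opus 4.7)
The plan is to reduce all three conditions to Kirchberg's C*-algebraic characterization of w.r.i.\ and the remark following it. For $(1) \Rightarrow (2)$ I will use the essential embedding of an arbitrary operator system into its universal C*-algebra; for $(3) \Rightarrow (1)$ I will use Theorem \ref{essentialunitary} together with Theorem \ref{S vs A} to transfer the tensorial inclusion from $\cl S_0 \in \mathfrak S$ up to the full free product C*-algebra, where Kirchberg's remark applies directly. The implication $(2) \Rightarrow (3)$ will be immediate.

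For $(1) \Rightarrow (2)$, fix an operator system $\cl S$. Since $\cl S \subseteq C^*_u(\cl S)$ is essential, one obtains the complete order embeddings $\cl A \otimes_{\max} \cl S \hookrightarrow \cl A \otimes_{\max} C^*_u(\cl S)$ and $\cl B \otimes_{\max} \cl S \hookrightarrow \cl B \otimes_{\max} C^*_u(\cl S)$. By Kirchberg's theorem, hypothesis $(1)$ yields the complete order embedding $\cl A \otimes_{\max} C^*_u(\cl S) \hookrightarrow \cl B \otimes_{\max} C^*_u(\cl S)$. Functoriality of $\otimes_{\max}$ assembles these into the commutative square
$$
\xymatrix{
\cl A \otimes_{\max} \cl S \ar@{^{(}->}[r] \ar[d] & \cl A \otimes_{\max} C^*_u(\cl S) \ar@{^{(}->}[d] \\
\cl B \otimes_{\max} \cl S \ar@{^{(}->}[r] & \cl B \otimes_{\max} C^*_u(\cl S)
}
$$
in which the top, right and bottom arrows are complete order embeddings and the left arrow is cp. The composition top-then-right is a complete order embedding, hence so is the composition left-then-bottom; combined with the fact that the bottom arrow is itself a complete order embedding, this forces the left vertical arrow to be a complete order embedding as well (if an element $x$ in some matrix level of $\cl A \otimes_{\max} \cl S$ becomes positive under the left map, then it becomes positive under the whole composition and so was already positive). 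This is exactly $(2)$.

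The implication $(2) \Rightarrow (3)$ follows by specializing $\cl S$ to any chosen $\cl S_0 \in \mathfrak S$. For $(3) \Rightarrow (1)$, write $\cl S_0 = \sqcup_i \ell^\infty_{n_i}$ with the index set and the $n_i$ as in the definition of $\mathfrak S$. By Theorem \ref{essentialunitary} the inclusion $\sqcup_i \ell^\infty_{n_i} \subseteq *_i \ell^\infty_{n_i}$ is essential and unitary, so Theorem \ref{S vs A} applied with $\cl B_1 = \cl A$ and $\cl B_2 = \cl B$ gives
$$
\cl S_0 \otimes_{\max} \cl A \subseteq \cl S_0 \otimes_{\max} \cl B \;\; \Longleftrightarrow \;\; *_i \ell^\infty_{n_i} \otimes_{\max} \cl A \subseteq *_i \ell^\infty_{n_i} \otimes_{\max} \cl B.
$$
Since $*_i \ell^\infty_{n_i} \cong C^*(*_i \bb Z_{n_i})$ and the numerical constraints imposed in the definition of $\mathfrak S$ are precisely those listed in the remark following Kirchberg's theorem, the right-hand C*-algebraic inclusion is equivalent to $\cl A$ being w.r.i.\ in $\cl B$, closing the cycle.

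The substantive content lies in the two essentiality sandwiches: in $(1) \Rightarrow (2)$ the essentiality of $\cl S$ in $C^*_u(\cl S)$ converts an operator-system tensor factor into a C*-algebra tensor factor so that Kirchberg's theorem can be applied, while in $(3) \Rightarrow (1)$ the essentiality and unitarity of $\sqcup_i \ell^\infty_{n_i}$ inside $*_i \ell^\infty_{n_i}$ goes the other direction via Theorem \ref{S vs A}. The main item to verify carefully is the diagram-chase in $(1) \Rightarrow (2)$, namely that the functorial square commutes at every matrix level and that positivity of the left map together with the complete-order-embedding of the composition really does force the left map itself to be a complete order embedding. Beyond this I do not anticipate further obstacles, since every auxiliary ingredient has already been established in the preliminary section.
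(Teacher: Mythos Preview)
Your proof is correct. The implications $(2)\Rightarrow(3)$ and $(3)\Rightarrow(1)$ match the paper's argument essentially verbatim. For $(1)\Rightarrow(2)$ you take a genuinely different route: the paper works directly from the definition of w.r.i., using the conditional expectation $\varphi:\cl B\to\cl A^{**}$ together with the embedding $\cl A\otimes_{\max}\cl S\subseteq\cl A^{**}\otimes_{\max}\cl S$ from \cite[Lemma~6.5]{KPTT Nuclearity}, and factors the canonical map $\cl A\otimes_{\max}\cl S\to\cl A^{**}\otimes_{\max}\cl S$ as $(\varphi\otimes\id)\circ(i\otimes\id)$. You instead replace the operator-system tensor factor by a C*-algebra via the essential inclusion $\cl S\subseteq C^*_u(\cl S)$ and then invoke the C*-algebraic Kirchberg characterization. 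Your approach has the advantage of using only facts already established in the preliminaries and avoids the external citation; the paper's approach is slightly more economical in that it never leaves the given $\cl S$. One minor remark: in your diagram chase you do not actually need that the bottom arrow is a complete order embedding---once the composition left-then-bottom is a complete order embedding, the first factor automatically is.
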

\begin{proof}  (1) $\Rightarrow$ (2): Let the operator system $\cl S$ be given. We use \cite[Lemma 6.5]{KPTT Nuclearity}: $\cl A \otimes_{\max} \cl S \subseteq \cl A^{**} \otimes_{\max} \cl S$ completely order isomorphically, that is, $\hat c \otimes id$ is a unital complete order embedding from $\cl A \otimes_{\max} \cl S$ to $\cl A^{**} \otimes_{\max} \cl S$ where $\hat c$ denotes the canonical embedding of $\cl A$ in to $\cl A^{**}$ .  Letting $\varphi: \cl B \rightarrow \cl A^{**}$ be the ucp map extending $\hat c$, the following composition of ucp maps
$$
\cl A \otimes_{\max} \cl S \xrightarrow{i \otimes id} \cl B \otimes_{\max} \cl S  \xrightarrow{\varphi \otimes id} \cl A^{**} \otimes_{\max} \cl S
$$
is a complete order embedding. So this is the case for the first map, which proves our claim.

\smallskip

(2) $\Rightarrow $ (3) is clear.

\smallskip

(3) $\Rightarrow$ (1): Let $\cl S_0 \in \mathfrak S$ be given. We will suppose that $\cl S_0 = \ell_n^\infty \sqcup \ell^\infty_k$ with $n,k\geq 2$ provided $n+k\geq 5$. In fact the other case can be treated identically. We know by Theorem \ref{essentialunitary} that $\cl S_0 \subseteq \ell_n^\infty * \ell^\infty_k $ forms an essential, unitary operator subsystem. Since $\cl A \otimes_{\max} \cl S_0 \subseteq \cl B \otimes_{\max} \cl S_0$, by Theorem \ref{S vs A}, we have a C*-algebra inclusion
$$
\cl A \otimes_{\max} \left( \ell_n^\infty * \ell^\infty_k \right) \subseteq \cl B \otimes_{\max}  \left( \ell_n^\infty * \ell^\infty_k \right).
$$
Now by the above theorem of Kirchberg (and the subsequent remark) $\cl A$ is w.r.i$.$ in $\cl B$.
\end{proof}

Recall that an operator system $\cl S$ is said to be \textit{C*-nuclear} if for every C*-algebra $\cl A$ we have a canonical complete order isomorphism
$$
\cl S \otimes_{\min} \cl A = \cl S \otimes_{\max}  \cl A.
$$
If $\cl S$ is C*-nuclear then for every C*-algebras $\cl A \subseteq \cl B$ we have a canonical complete order inclusion $\cl A \otimes_{\max} \cl S \subseteq \cl B \otimes_{\max} \cl S $, in fact one can replace max by min which is injective. The following is logically equivalent to amenability of  $\bb Z_2 * \bb Z_2$:

\begin{proposition}[Cor$.$ 6.13 of \cite{kavruk2012}]\label{NP_2} $\cl {NP}_2 = \ell_2^\infty\sqcup \ell^\infty_2$ is C*-nuclear.
\end{proposition}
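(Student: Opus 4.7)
The idea is to realize $\cl{NP}_2$ as an essential operator subsystem of a nuclear C*-algebra and then sandwich the minimal and maximal tensor norms. By Theorem \ref{essentialunitary} applied to $\cl A_1 = \cl A_2 = \ell_2^\infty \cong C^*(\bb Z_2)$, the canonical embedding
$$
\cl{NP}_2 = \ell_2^\infty \sqcup \ell_2^\infty \;\subseteq\; \ell_2^\infty * \ell_2^\infty \;\cong\; C^*(\bb Z_2) * C^*(\bb Z_2) \;\cong\; C^*(\bb Z_2 * \bb Z_2)
$$
is an essential operator subsystem. The external input I plan to use is that the infinite dihedral group $\bb Z_2 * \bb Z_2$ is amenable --- the product of the two generating involutions has infinite order and generates an index-two subgroup isomorphic to $\bb Z$, so the group is virtually abelian --- and therefore $C^*(\bb Z_2 * \bb Z_2)$ is a nuclear C*-algebra.

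With these two ingredients in place, I fix an arbitrary C*-algebra $\cl A$ and chase the diagram
$$
\cl{NP}_2 \otimes_{\max} \cl A \;\hookrightarrow\; C^*(\bb Z_2 * \bb Z_2) \otimes_{\max} \cl A \;=\; C^*(\bb Z_2 * \bb Z_2) \otimes_{\min} \cl A \;\hookleftarrow\; \cl{NP}_2 \otimes_{\min} \cl A,
$$
in which the leftmost arrow is a complete order embedding by essentiality, the middle equality is nuclearity of $C^*(\bb Z_2 * \bb Z_2)$, and the rightmost arrow is a complete order embedding by injectivity of $\otimes_{\min}$. A positive element of $\cl{NP}_2 \otimes_{\max} \cl A$ is therefore positive inside $C^*(\bb Z_2 * \bb Z_2) \otimes_{\min} \cl A$, and hence already positive in the operator subsystem $\cl{NP}_2 \otimes_{\min} \cl A$. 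Combined with the always-available ucp map $\cl{NP}_2 \otimes_{\max} \cl A \to \cl{NP}_2 \otimes_{\min} \cl A$, this gives the order isomorphism $\cl{NP}_2 \otimes_{\max} \cl A = \cl{NP}_2 \otimes_{\min} \cl A$. Running the same sandwich after replacing $\cl A$ by $M_m(\cl A)$ upgrades this to a complete order isomorphism, establishing the C*-nuclearity of $\cl{NP}_2$.

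The only non-formal ingredient is the amenability of $\bb Z_2 * \bb Z_2$, i.e.\! the nuclearity of its full group C*-algebra; once that input is accepted, the remainder is a routine diagram chase using only the essentiality provided by Theorem \ref{essentialunitary} and the injectivity of the minimal tensor product. This also accounts for the author's parenthetical remark that the proposition is logically equivalent to amenability of $\bb Z_2 * \bb Z_2$: via Theorem \ref{S vs A} one can transfer tensorial information back and forth between the essential unitary subsystem $\cl{NP}_2$ and its enveloping C*-algebra $C^*(\bb Z_2 * \bb Z_2)$, so that C*-nuclearity on one side corresponds to nuclearity on the other.
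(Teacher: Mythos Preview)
Your argument is correct and is precisely the route the paper points to: the proposition is stated here without proof (it is cited from \cite{kavruk2012}), and the sentence immediately preceding it---``The following is logically equivalent to amenability of $\bb Z_2 * \bb Z_2$''---is exactly the mechanism you have written out, using Theorem~\ref{essentialunitary} for essentiality and the nuclearity of $C^*(\bb Z_2 * \bb Z_2)$ for the sandwich. There is nothing to add.
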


We close this section with a brief summary on the weak expectation property (WEP) which is introduced by C. Lance \cite{Lance}, \cite{Lance2}.  A unital C*-algebra is said to have the \textit{weak expectation property} if the canonical inclusion of $\cl A$ into bidual von Neumann algebra $\cl A^{**}$ decomposes via ucp maps through an injective C*-algebra. In particular if we represent $\cl A$ into a $B(\cl H)$, then $\cl A$ has WEP if and only if the canonical inclusion of $\cl A$ into $\cl A^{**}$ extends to a ucp map on $B(\cl H)$. Therefore a unital C*-algebra $\cl A$ has WEP if and only if it is w.r.i$.$ in $B(\cl H)$ for some representation $\cl A \subseteq B(\cl H)$.   An operator system $\cl S$ is said to have the \textit{local lifting property} if for every C*-algebra $\cl A$ and ideal $I$ in $\cl A$ the following holds: for every ucp map $\varphi: \cl S \rightarrow \cl A/I $ and for every finite dimensional subspace $\cl S_0 \subseteq \cl S$ $\varphi|_{\cl S_0}$ has a ucp (equivalently cp) lift on $\cl A$. If the $\cl S$ is finite dimensional or a global lifting exists we simply use the term \textit{lifting property}. The reader may refer to \cite{KPTT Nuclearity} for more detail on the topic. Extending a well-known result of Kirchberg \cite{Kirchberg94}, an operator system $\cl S$ has the local lifting property if and only if we have
$$
\cl S \otimes_{\min} B(\cl H) = \cl S \otimes_{\max} B(\cl H)
$$
canonically, where $\cl H$ is a separable Hilbert space. Either by using the definition with standard Paulsen trick \cite{PaulsenBook} or the tensor characterization one can easily shows that $\cl S$ has the local lifting property if and only if $M_n(\cl S)$ has the same property for every $n$. We in particular need the fact that $C^*(\bb F_\infty)$ has the local lifting property \cite{Kirchberg94}. Also the reader may verify that if $\{\cl S_i\}$ is family of operator systems with the lifting property then $\sqcup_i \cl S_i$ has the lifting property \cite{kavruk2011}.

\section{Proof of the Main Results}

In this section we will prove the main results we listed in the preface and provide the promised example regarding the behaviour of Arveson-Riesz property in the general operator system category.
\begin{example}\label{1. example}
In Theorem \ref{22 RA Prop} we have seen that (2,2)-Riesz Arveson Property holds in the category of C*-algebras. Here we will show that it fails in general operator systems. Consider the Namioka-Phelps test system
$$
\cl V = \{(a_1,a_2,a_3,a_4): a_1+a_2 = a_3+a_4\} \subset \ell^\infty_4.
$$
Consider the states $\varphi_i: \cl V \rightarrow \bb C$ defined by $\varphi_i(a_1,a_2,a_3,a_4) = a_j$ for $i=1,2,3,4$. Clearly $\varphi_1 + \varphi_2 = \varphi_3 + \varphi_4$. We claim that there are no positive extensions $\tilde \varphi_i$ of $\varphi_i$ on $\ell_4^\infty$, $i=1,2,3,4$, with the property that $\tilde \varphi_1 + \tilde \varphi_2 = \tilde \varphi_3 + \tilde \varphi_4$. Assume for a contradiction that such maps exist. Then we have a well-defined ucp map
$$
\gamma: \ell_4^\infty \rightarrow \ell_4^\infty \mbox{ given by } \gamma(x) = (\tilde \varphi_1(x) , \tilde \varphi_2(x) , \tilde \varphi_3(x) , \tilde \varphi_4(x)).
$$
We first note that the image of $\gamma$ is contained in $\cl V$ and secondly the restriction of $\gamma$ on $\cl V$ is the identity on $\cl V$. We leave the elementary verification of these facts to the reader. Therefore we may regard $\gamma$ as a ucp extension of the identity on $\cl V$
$$
\xymatrix{
\cl V \ar@{^{(}->}[rr]^{id} \ar@{_{(}->}[d]_i & & \cl V \\
\ell_\infty^4 \ar[rru]_{\gamma} & &
}
$$
But then a standard Choi-Effros trick forces $\cl V$ to have a structure of a C*-algebra, which yields a contradiction. In fact there are many ways to see that $\cl V$ cannot be operator system isomorphic to $\ell^\infty_3$ (perhaps the most versatile method is that $\cl V$, as a Kadison space, is a test object to verify nuclearity so cannot be order isomorphic to $\ell_3^\infty$.) 
\end{example}

\begin{proof}[proof of Theorem 0.1] (1)  Let $[\varphi] \subseteq CB(\cl A, \cl C)$ be the Effros system associated with $\varphi$. Consider the map  $\gamma : \cl {NC}_2 \rightarrow [\varphi] \subseteq CB(\cl A, \cl C)$ defined by $1\mapsto \varphi$, $e_1 \mapsto \varphi_1$, $e_2 \mapsto \varphi_2$. The universal property of the non-commutative cube $\cl {NC}_2$ ensures that $\gamma$ is a ucp map. By the representation of the maximal tensor product, $\gamma$ corresponds to a completely positive map $\Gamma: \cl {NC}_2 \otimes_{\max} \cl A \rightarrow \cl C$. Since $\cl {NC}_2$ is C*-nuclear operator system (Proposition \ref{NP_2}) we have an obvious inclusion $\cl {NC}_2 \otimes_{\max} \cl A \subseteq \cl {NC}_2 \otimes_{\max} \cl B$. As $\cl C$ is an injective C*-algebra, $\Gamma$ extends to a completely positive map $\tilde \Gamma: \cl {NC}_2 \otimes_{\max} \cl B \rightarrow \cl C$. Let $\tilde \gamma$ be the corresponding completely positive map from  $\cl {NC}_2$ to $CB(\cl B, \cl C)$. It is elementary to see that $\tilde \gamma(\cdot)|_{\cl A} = \gamma$. Now setting $\tilde \varphi =: \tilde \gamma(1)$, $\tilde \varphi_1 =: \tilde \gamma(e_1)$, $\tilde \varphi_2 =: \tilde \gamma(e_2)$, we obtain the desired extensions.

$ $

(2) Let $\ell^\infty_2 \sqcap \ell_2^\infty =  \cl V = \{(a,b,c,d): a+b = c+ d\} \subset \ell_4^\infty$ be the standard Namioka-Phelp test system. Note that $\cl C \otimes_{\min} \cl V$ can be identified with $\{(c_1,c_2, c_3,c_4: c_1 + c_2 = c_3 + c_4)\} \subset \cl C \otimes \ell_4^\infty$ $\cong$ $ \oplus_{i=1}^4 \cl C $. Consider the map $\gamma: \cl A \rightarrow \cl C \otimes_{\min} \cl V$ given by $a \mapsto (\varphi_1(a), \varphi_2 (a), \varphi_3(a), \varphi_4(a))$, which is a completely positive map. By the representation of the minimal tensor product we have the identification of $CB(\cl V^*, \cl C) \cong \cl C \otimes_{\min} \cl V$. Moreover, by the representation of the maximal tensor product, $\gamma: \cl A \rightarrow CB(\cl V^*, \cl C)$ corresponds a completely positive map $\Gamma: \cl A \otimes_{\max} \cl V^* \rightarrow \cl C$. By using the fact that $\cl V^* \cong \cl {NC}_2$ and $\cl {NC}_2$ is C*-nuclear, we deduce that $\cl A \otimes_{\max} \cl V^* \subseteq \cl B \otimes_{\max} \cl V^*$. As $\cl C$ is an injective C*-algebra, there exists a completely positive map $\tilde \Gamma: \cl B \otimes_{\max} \cl V^* \rightarrow \cl C$ that extends $\Gamma$. Now following the procedures backward we obtain a completely positive map $\tilde \gamma: \cl B  \rightarrow \cl C \otimes_{\min} \cl V $ that extends $\gamma$. Now the components of this map satisfy the desired properties.
\end{proof}

\textit{Remark.} It is not hard to show that the statements of Theorem 0.1 are equivalent. However we prove these statements separately as the proofs rely on duality corresponding of pullbacks and pushout of operator systems.

\begin{proof}[proof of Theorem 0.2] (1) $\Rightarrow $ (2): Let cp maps $\varphi_1, \varphi_2, \varphi_3, \varphi : \cl A \rightarrow M_m$ with $\varphi_1,\varphi_2, \varphi_3 \leq \varphi$ be given. Recall, by Theorem \ref{main WRI}, that $\cl A$ being w.r.i$.$ in $\cl B$ is equivalent to the statement that $\cl A \otimes_{\max} \cl {NC}_3 \subseteq \cl B \otimes_{\max} \cl {NC}_3$ completely order isomorphically, where $\cl {NC}_3 = \ell^\infty_2 \sqcup \ell^\infty_2 \sqcup \ell^\infty_2$ denotes the non-commutative cube. Consider the Effros system $$[\varphi] = {\rm span} \{ \phi: \phi \mbox{ is a cp map with } \phi \leq \varphi \} \subseteq CB(\cl A, M_m).$$ Note that $\gamma: \cl {NC}_3 \rightarrow [\varphi] \subseteq CB(\cl A, M_m)$ given by $e_1 \mapsto \varphi_1$, $e_2 \mapsto \varphi_2$, $e_2 \mapsto \varphi_3$, $e \mapsto \varphi$ declares a ucp map by the universal property of the non-commutative cube. By Theorem \ref{rep of max}, $\gamma$ corresponds to a cp map $\Gamma: \cl A \otimes_{\max} \cl {NC}_3 \rightarrow M_m$. Since $M_m$ is injective, $\Gamma$ extends to a cp map $\tilde \Gamma: \cl B \otimes_{\max} \cl {NC}_3 \rightarrow M_m$. Now if we consider the associated map $\tilde \gamma : \cl {NC}_3 \rightarrow CB(\cl B, M_m)$ we obtain that the maps $\tilde \gamma(e_1)$, $\tilde \gamma(e_2)$, $\tilde \gamma(e_3)$ and $\tilde \gamma(e)$ are the desired extensions of $\varphi_1$, $\varphi_2$, $\varphi_3$ and $\varphi$, respectively.

\smallskip

 (2) $\Rightarrow $ (1): Consider the restriction map $R: CB(\cl B, M_m) \rightarrow CB(\cl A, M_m) $ given by $\theta \mapsto \theta|_{\cl A}$. Note that there is a one-to-one correspondence between cp maps  $\varphi_1, \varphi_2, \varphi_3, \varphi : \cl A \rightarrow M_m$ with $\varphi_1,\varphi_2, \varphi_3 \leq \varphi$ and $ \gamma(e_1)$, $ \gamma(e_2)$, $ \gamma(e_3)$ and $ \gamma(e)$ where $\gamma$ is a cp map from $\cl {NP}_3$ to ${\rm CB}(\cl A, M_m)$. Therefore the statement in (2) is equivalent to every cp $\gamma: \cl {NP}_3 \rightarrow   CB(\cl A, M_m)$ lifts to cp map on $ CB(\cl B, M_m)$:
 $$
\xymatrix{
 \cl {NP}_3 \ar[rr]^{\gamma} \ar[rrd]_{\tilde \gamma}& & CB(\cl A, M_m)   \\
& & CB(\cl B, M_m) \ar[u]_{R}
}.
$$
By the representation of the maximal tensor product this is same as saying that for every cp map $\Gamma :  \cl A \otimes_{\max} \cl {NC}_3 \rightarrow M_m$  there exits a cp map $\tilde \Gamma :  \cl B \otimes_{\max} \cl {NC}_3 \rightarrow M_m$ such that the following composition
$$
 \cl A \otimes_{\max} \cl {NC}_3 \xrightarrow{i\otimes id}  \cl B \otimes_{\max} \cl {NC}_3 \xrightarrow{\;\;\;\;\tilde \Gamma\;\;\;\;} M_m
$$
coincides with $\Gamma$. This shows that the canonical map $\cl A \otimes_{\max} \cl {NC}_3 \xrightarrow{i\otimes id}  \cl B \otimes_{\max} \cl {NC}_3 $ is $m$-order inclusion. Since $m$ is arbitrary it follows that we have a complete order inclusion $\cl A \otimes_{\max} \cl {NC}_3 \subseteq \cl B \otimes_{\max} \cl {NC}_3$. Now by Theorem \ref{main WRI}, $\cl A$ is w.r.i$.$ in $\cl B$.

\smallskip

(2) $\Leftrightarrow$ (4): It's not hard to see that the conditions in (2) and (4) are equivalent. In fact, assuming (2), given $\varphi_i$, $i=1,...,6$, one can proceed by setting $\varphi = \varphi_1 +  \varphi_2$ which dominates $\varphi_1$, $\varphi_3$ and $\varphi_5$. Conversely, assuming $(4)$, given $\varphi_1,\varphi_2, \varphi_3, \varphi$ as in (2), one can proceed by considering the pairs of cp maps $\varphi_1$, $\varphi-\varphi_1$, $\varphi_2$, $\varphi-\varphi_2$ and $\varphi_3$, $\varphi-\varphi_3$ whose sums are equal. We leave the details to the reader.

\smallskip

(1) $\Leftrightarrow$ (3) The steps of the proof follow the same patterns as in (1) $\Leftrightarrow$ (2), in fact one can simply replace $\cl {NP}_3  = \ell_2^\infty \sqcup \ell_2^\infty \sqcup \ell_2^\infty$ by $\ell_2^\infty \sqcup \ell_3^\infty$. For example given $\varphi_i$, $i=1,...,5$ with $\varphi_1 + \varphi_2 = \varphi_3 + \varphi_4+ \varphi_5$ we can define a cp map from $\ell_2^\infty \sqcup \ell_3^\infty $ to $CB(\cl A, M_m)$ by $e_1^{1} = \varphi_1$, $e_2^{1} = \varphi_2$, $e_1^{2} = \varphi_3$, $e_2^{2} = \varphi_4$, $e_3^{2} = \varphi_5$ where $\{e_1^1,e_2^1 \}$ and $\{e_1^2,e_2^2, e_3^2 \}$ are the canonical basises for $\ell_2^\infty$ and $\ell_3^\infty$, respectively. One can proceed by using the fact that $\cl A \subseteq \cl B$ relatively weakly injective if and only if $\cl A \otimes_{\max} \left(\ell_2^\infty \sqcup \ell_3^\infty \right)\subseteq \cl B \otimes_{\max} \left(\ell_2^\infty \sqcup \ell_3^\infty\right) $ completely order isomophically (Theorem \ref{main WRI}). We leave the details to the reader.
\end{proof}

\smallskip

For a pair of C*-algebras $\cl A \subseteq B$ we say that $\cl A$ has the ($n,k$)-\textit{Riesz-Arveson property in} $\cl B$ for for every $m$, every cp maps $\varphi_i:\cl A \rightarrow M_m$, $i=1,...,n+k$, with
$$
\sum_{i=1}^n \varphi_i = \sum_{i=n+1}^{n+k} \varphi_i
$$
have cp extensions $\tilde \varphi_i:\cl B \rightarrow M_n$, $i=1,...,n+k$, resp$.$, with
$$
\sum_{i=1}^n \tilde \varphi_i = \sum_{i=n+1}^{n+k} \tilde \varphi_i.
$$
Simply following the arguments in the above proof one can show that $\cl A$ is w.r.$i$. in $\cl B$ if and only if it has $(n,k)$-Riesz-Arveson property in $\cl B$ for every $n$ and $k$, (equivalently for some $n$ and $k$ satisfying $n\geq 2$, $k\geq 2$ with $n+k\geq 5$). Here the ranges $\{M_m\}_{m=1}^\infty$ can be replaced by $B(\cl H)$ where $\cl H$ is a separable Hilbert space or arbitrary injective objects. It is also possible sharpen (2) of Theorem 0.2:

\begin{theorem} The following statements are equivalent for $\cl A \subseteq \cl B$:
\begin{enumerate}
\item $\cl A$ is w.r.i$.$ in $\cl B$;
\item for any injective object $\cl C$ and cp map $\varphi:\cl A \rightarrow \cl C$ we have a cp extension $\tilde \varphi: \cl B \rightarrow \cl C$ of $\varphi$ such a way that every cp map $\varphi_0:\cl A \rightarrow \cl C$ with $\varphi_0 \leq \varphi$ has a cp extension $\tilde \varphi_0: \cl B \rightarrow \cl C$ with $\tilde \varphi_0 \leq \tilde \varphi$. Moreover this can be achieved such a way that $\varphi_0 \mapsto \tilde \varphi_0$ is a ucp map from $[\varphi] \subseteq CB(\cl A,\cl C)$ to $[\tilde \varphi] \subseteq CB(\cl B,\cl C)$ (with a ucp inverse given by the restriction).
\end{enumerate}
\end{theorem}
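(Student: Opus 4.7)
The plan is to reduce both implications to the tensor-product characterization of w.r.i.\! given by Theorem \ref{main WRI} together with the representation of the maximal tensor product in Theorem \ref{rep of max}, using the Effros system
\[
[\varphi] = \mathrm{span}\{\psi \in CB(\cl A,\cl C) : 0 \leq \psi \leq \varphi\}
\]
as an operator system with $\varphi$ as Archimedean matrix order unit.

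For the direction (2) $\Rightarrow$ (1) I would simply specialize (2) to the injective object $\cl C = M_m$. Given cp maps $\varphi_1,\varphi_2,\varphi_3 \leq \varphi$ from $\cl A$ into $M_m$, (2) produces compatible extensions $\tilde\varphi_1,\tilde\varphi_2,\tilde\varphi_3 \leq \tilde\varphi$ on $\cl B$, which is exactly condition (2) of Theorem 0.2, and hence forces $\cl A$ to be w.r.i.\! in $\cl B$.

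The main direction is (1) $\Rightarrow$ (2). The key step is to encode the whole family of subordinate cp maps into a single tensor product extension problem. The identity inclusion $\gamma : [\varphi] \hookrightarrow CB(\cl A,\cl C)$ is ucp, and by Theorem \ref{rep of max} it corresponds to a cp map
\[
\Gamma : [\varphi] \otimes_{\max} \cl A \longrightarrow \cl C, \qquad \Gamma(\psi \otimes a) = \psi(a).
\]
By Theorem \ref{main WRI}, w.r.i.\! of $\cl A$ in $\cl B$ gives the complete order inclusion $[\varphi] \otimes_{\max} \cl A \subseteq [\varphi] \otimes_{\max} \cl B$, and injectivity of $\cl C$ then extends $\Gamma$ to a cp map $\tilde\Gamma : [\varphi] \otimes_{\max} \cl B \to \cl C$. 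Translating back through Theorem \ref{rep of max} yields a cp map $\tilde\gamma : [\varphi] \to CB(\cl B,\cl C)$ with $\tilde\gamma(\psi)(b) = \tilde\Gamma(\psi \otimes b)$.

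The remaining verifications are routine. Because $\tilde\Gamma$ extends $\Gamma$, one has $\tilde\gamma(\psi)|_{\cl A} = \psi$ for every $\psi \in [\varphi]$, and setting $\tilde\varphi := \tilde\gamma(\varphi)$ together with $\tilde\varphi_0 := \tilde\gamma(\varphi_0)$ for each $\varphi_0 \in [\varphi]^+$ produces the desired cp extensions with $\tilde\varphi_0 \leq \tilde\varphi$ (apply $\tilde\gamma$ to $\varphi - \varphi_0 \geq 0$). The assignment $\varphi_0 \mapsto \tilde\varphi_0$ is then a ucp map $[\varphi] \to [\tilde\varphi]$ sending $\varphi$ to $\tilde\varphi$, while the restriction map $R : [\tilde\varphi] \to [\varphi]$ (well-defined because $\tilde\psi \leq \tilde\varphi$ forces $\tilde\psi|_{\cl A} \leq \varphi$) is visibly ucp and satisfies $R \circ \tilde\gamma = \mathrm{id}_{[\varphi]}$, which is the promised one-sided inverse. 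The one step I would scrutinize most carefully is that $[\varphi]$ genuinely carries an operator system structure with $\varphi$ as Archimedean matrix order unit, so that Theorems \ref{main WRI} and \ref{rep of max} apply; this follows from the fact that every positive element of $[\varphi]$ is dominated by a multiple of $\varphi$ together with closedness of the completely positive cone in $CB(\cl A,\cl C)$, with Archimedeanization invoked if necessary.
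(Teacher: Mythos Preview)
Your proof is correct and follows essentially the same approach as the paper: encode the Effros system $[\varphi]$ via the evaluation map on $[\varphi]\otimes_{\max}\cl A$, use w.r.i.\ (Theorem~\ref{main WRI}) to pass to $[\varphi]\otimes_{\max}\cl B$, extend by injectivity of $\cl C$, and decode via Theorem~\ref{rep of max}. The paper's proof is terser and leaves the routine verifications (including the operator system structure on $[\varphi]$) to the reader, whereas you spell them out.
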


\begin{proof} (2)$\Rightarrow$(1). (2) implies Theorem 0.2 (2) so $\cl A$ is w.r.i$.$ in $\cl B$.
\smallskip

(1)$\Rightarrow$(2): The evaluation map $E: \cl A\otimes_{\max} [\varphi] \rightarrow \cl C$ given by $E(a\otimes \psi) = \psi(a)$ is a cp map. As $\cl A \otimes_{\max} [\varphi] \subseteq \cl B \otimes_{\max} [\varphi]$, we have a cp extension $\tilde E : \cl B \otimes_{\max} [\varphi] \rightarrow \cl C$ of $E$. By the representation of the max $\tilde E$ corresponds to a cp map $\gamma: [\varphi] \rightarrow CB(\cl B,\cl C)$. Setting $\tilde \varphi : = \gamma(\varphi)$ the reader may verify that we have all the above mentioned properties.
\end{proof}

\smallskip

We now turn out attention to tight Riesz interpolation property. Recall that TR(2,2)-property holds for arbitrary pairs of C*-algebras $\cl A\subseteq \cl B$. It fails in general operator systems:

\begin{example} \label{2. example} Consider $\cl V = \{(a,b,c,d): a+b = c+d\} \subseteq \ell^\infty_4$, the standard Namioka-Phelps test system, and the elements
$$
x_1 = (-3,1,-1,-1),\; x_2 = (1,-3,-1,-1) ,\; x_3 = (2,2,4,0),\; x_4 = (2,2,0,4).
$$ Note that $x_1,x_2$ and $x_3,x_4$ interpolate in $\ell_4^\infty$, in fact if we set $y = (3/2, 3/2, -1/2, -1/2)$ then clearly $x_1, x_2 < y < x_3, x_4$. However there exists no $y_0 \in \cl V$ with this property. We leave the elementary verification to the reader.
\end{example}

Before we proceed we recall the quotient characterization of the pushout: for operator systems $\cl S_i$, $i=1,...,n$, we have a canonical complete order isomorphism 
$$
\left(\bigoplus_{i=1}^n \sl S_i\right)/J \cong \bigsqcup_{i=1}^n \sl S_i.
$$
where $J$ is null-subspace given by $J=$span$\{ (e,-e,0,...,0),  (e,0,-e,0,...,0),  (e,0,...,-e)\}$ (see Sec$.$ 5 and Theorem 5.1 of \cite{FKTT Char.WEP}). In particular we have a complete order isomorphism
$$
\ell^\infty_{n} \oplus \ell^\infty_{k}/ {\rm span}\{(e,-e)\} \cong \ell^\infty_{n+k}/ {\rm span}\{ (\underbrace{1,1,..,1}_{n-terms},\underbrace{-1,-1,..,-1}_{k-terms}) \} \cong \ell_n^\infty \sqcup \ell_k^\infty.
$$
Based on the projectivity of the maximal tensor product \cite{Han} we have:

\begin{lemma}\label{Lem TR char} Let $\cl S$ be an operator system and $s_1,s_2,...,s_{n+k}$ are self-adjoint elements of $\cl S$. Then the following statements are equivalent:

\smallskip

\begin{enumerate}

\item $\sum_{i=1}^{n+k} s_i \otimes \dot e_i > 0$ in $\cl S \otimes_{\max}  \ell^\infty_{n+k} / {\rm span}\{(e_n,-e_k)\}$;

\smallskip

\item there exists a self-adjoint element $s \in \cl S$ such that $-s_i < s$ for $i=1,...,n$ and $s< s_j$ for $j = n+1,...,n+k$. 

\end{enumerate}
\end{lemma}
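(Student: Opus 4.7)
The plan is to leverage the projectivity of the maximal tensor product to transport the question into a direct sum of copies of $\cl S$, where strict positivity becomes transparent. The crucial identification is
$$
\cl S \otimes_{\max} \ell^\infty_{n+k} \;\cong\; \bigoplus_{i=1}^{n+k}\cl S,
$$
which holds because $\ell^\infty_{n+k}$ is a nuclear commutative C*-algebra; under this identification a self-adjoint element is strictly positive iff every coordinate is strictly positive in $\cl S$.

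First, I would note that the quotient map $q : \ell^\infty_{n+k} \to \ell^\infty_{n+k}/\mathrm{span}\{(e_n,-e_k)\}$ is a complete quotient map by construction, so by projectivity of $\otimes_{\max}$ (\cite{Han}) its amplification $\mathrm{id}_{\cl S}\otimes q$ is a complete quotient map onto $\cl S \otimes_{\max}\bigl(\ell^\infty_{n+k}/\mathrm{span}\{(e_n,-e_k)\}\bigr)$, with kernel $\cl S\otimes\mathrm{span}\{(e_n,-e_k)\}$. Next, a routine Archimedeanization argument shows that strictly positive elements of the codomain of a complete quotient map always admit strictly positive lifts: if $\dot{y}\geq \delta \dot e$ in the quotient, then unpacking the definition of the quotient cones gives an actual lift $x$ of $y$ with $x\geq (\delta/2)e>0$. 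This is what makes the ``strict'' condition in (1) translatable to a lifting property in $\bigoplus\cl S$.

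Third, I would parametrize the lifts of $\sum_{i=1}^{n+k}s_i\otimes\dot{e}_i$. Since the kernel is $\cl S\otimes\mathrm{span}\{(e_n,-e_k)\}$, the general self-adjoint lift takes the form
$$
\bigl(s_1+s,\;\ldots,\;s_n+s,\;s_{n+1}-s,\;\ldots,\;s_{n+k}-s\bigr)\in\bigoplus_{i=1}^{n+k}\cl S
$$
for some self-adjoint $s\in\cl S$. Coordinate-wise strict positivity of this tuple reads $s_i+s>0$ for $i\le n$ and $s_j-s>0$ for $j>n$, i.e.\ the tight interpolation inequalities $-s_i<s$ for $i\le n$ and $s<s_j$ for $j>n$ required in (2). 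Chaining the three observations gives (1)$\Leftrightarrow$(2) in both directions simultaneously.

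I do not anticipate any genuine obstacle; the only point calling for slight care is the strict lifting step, since strict positivity on the quotient side only asserts $\dot x\geq \delta \dot e$ and one must check that the Archimedeanized quotient cone still yields an \emph{actual} strictly positive lift (rather than an approximate one). Once this is noted, the rest of the argument is a change of variable that parametrizes lifts via an auxiliary self-adjoint element $s\in\cl S$.
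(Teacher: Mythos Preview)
Your proposal is correct and follows essentially the same route as the paper: both arguments use projectivity of $\otimes_{\max}$ (citing \cite{Han}) to identify the quotient with $(\cl S\otimes_{\max}\ell^\infty_{n+k})/(\cl S\otimes\mathrm{span}\{(e_n,-e_k)\})$, use nuclearity of $\ell^\infty_{n+k}$ to pass to $\bigoplus_{i=1}^{n+k}\cl S$, observe that strict positivity in the quotient yields a genuinely positive lift, and then parametrize the self-adjoint lifts by a single $s\in\cl S_{sa}$ to read off the interpolation inequalities coordinate-wise. The strict-lifting subtlety you flag is exactly the point the paper handles by first subtracting $\delta(e\otimes\dot e)$ before lifting.
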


\begin{proof} (1) $\Rightarrow $ (2). Since $\sum_{i=1}^{n+k} s_i \otimes \dot e_i > 0$ we can find $\delta>0$ such that  $\sum_{i=1}^{n+k} s_i \otimes \dot e_i  - \delta (e\otimes \dot{e})> 0$. The projectivity of the maximal tensor product ensures that we have a canonical complete order isomorphism
$$
\cl S \otimes_{\max}  \ell^\infty_{n+k} / {\rm span}\{(e_n,-e_k)\} \cong \left( \cl S \otimes_{\max} \ell_{n+k}^\infty \right) / (\cl S \otimes {\rm span}\{(e_n,-e_k)\}).
$$
Since $\sum_{i=1}^{n+k} s_i \otimes \dot e_i  - \delta e\otimes \dot e$ is a strictly positive element, it must have a positive representative in $\cl S \otimes_{\max} \ell_{n+k}^\infty$, say 
$$
\sum_{i=1}^{n+k} s_i \otimes e_i  - \delta e\otimes  e + s\otimes (e_n, -e_k).
$$
As $ \ell_{n+k}^\infty$ is a nuclear we have the identification   $\cl S \otimes_{\max} \ell_{n+k}^\infty \cong \cl S \otimes_{\min} \ell_{n+k}^\infty \cong \oplus_{i = 1}^{n+k} \cl S$. Now the element above can be written as
\begin{eqnarray}\label{poselement}{
(s_1,s_2,...,s_{n+k}) -\delta (1,1,...,1) +
(\underbrace{s,s,..,s}_{n-terms},\underbrace{-s,-s,..,-s}_{k-terms}) \} \geq 0.
}\end{eqnarray}
Therefore we have $s_i - \delta 1 + s \geq 0$ for $i = 1,...,n$ and $s_i - \delta 1 - s \geq 0$ for $i = n+1,...,n+k$. Now clearly the statement in (2) holds.

\smallskip

(2) $\Rightarrow $ (1). So we can find $\delta > 0$ such that $s_i - \delta 1 + s \geq 0$ for $i = 1,...,n$ and $s_i - \delta 1 - s \geq 0$ for $i = n+1,...,n+k$. This implies that the element that appears in (2.1) is strictly positive in $ \oplus_{i = 1}^{n+k} \cl S \cong \cl S \otimes_{\max} \ell_{n+k}^\infty$. By using the definition of the quotient (and the projectivity of the maximal tensor product) the element $ \sum_{i=1}^{n+k} s_i \otimes \dot e_i  - \delta e\otimes \dot e$ is positive in $\cl S \otimes_{\max}  \ell^\infty_{n+k} / {\rm span}\{e_n,-e_k\}$. This proves our claim.
\end{proof}

\begin{lemma}\label{TR & tensor} Let $\cl A $ be a unital C*-subalgebra of $  \cl B$. The following are equivalent:
\begin{enumerate}
\item $\cl A$ has TR(n,k)-property in $\cl B$;

\item  we have a canonical order embedding
$
\cl A \otimes_{\max} \left( \ell_n^\infty \sqcup \ell_k^\infty \right) \subseteq 
\cl B \otimes_{\max} \left( \ell_n^\infty \sqcup \ell_k^\infty \right).
$
\end{enumerate}
\end{lemma}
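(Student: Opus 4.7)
The plan is to reformulate both conditions in a common tensor-theoretic language via Lemma \ref{Lem TR char} and then exploit the standard fact that a unital order embedding between Archimedean operator systems reflects strict positivity. First I would recall that $\ell_n^\infty \sqcup \ell_k^\infty \cong \ell_{n+k}^\infty / \mathrm{span}\{(e_n,-e_k)\}$ and denote by $\dot e_1,\ldots,\dot e_{n+k}$ the images of the canonical basis; these span $\ell_n^\infty \sqcup \ell_k^\infty$. Because the kernel is self-adjoint, any self-adjoint $u \in \cl A \otimes (\ell_n^\infty \sqcup \ell_k^\infty)$ admits a representation $u = \sum_{i=1}^{n+k} s_i \otimes \dot e_i$ with self-adjoint $s_i \in \cl A$. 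Lemma \ref{Lem TR char} then furnishes the bridge: such a $u$ is strictly positive in $\cl A \otimes_{\max}(\ell_n^\infty \sqcup \ell_k^\infty)$ if and only if there is a self-adjoint $a \in \cl A$ with $-s_1,\ldots,-s_n < a < s_{n+1},\ldots,s_{n+k}$; the analogous equivalence holds with $\cl B$ in place of $\cl A$.

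For $(2) \Rightarrow (1)$, given self-adjoint $x_1,\ldots,x_n,y_1,\ldots,y_k \in \cl A$ that interpolate in $\cl B$, I would set $s_i = -x_i$ for $i \leq n$ and $s_{n+j} = y_j$ for $j \leq k$. The bridge in $\cl B$ makes $u := \sum_{i=1}^{n+k} s_i \otimes \dot e_i \in \cl A \otimes (\ell_n^\infty \sqcup \ell_k^\infty)$ strictly positive in $\cl B \otimes_{\max} (\ell_n^\infty \sqcup \ell_k^\infty)$. Since by assumption $\cl A \otimes_{\max} (\ell_n^\infty \sqcup \ell_k^\infty) \hookrightarrow \cl B \otimes_{\max} (\ell_n^\infty \sqcup \ell_k^\infty)$ is an order embedding and the Archimedean order units coincide, this strict positivity descends (strict positivity of $u$ means $u - \delta(1\otimes 1) \geq 0$, and positivity is reflected). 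The bridge applied in $\cl A$ then yields the desired interpolant. For $(1) \Rightarrow (2)$ I would run the same chain in reverse: by Archimedeanness an inclusion of operator systems sharing a common order unit is an order embedding iff strict positivity is reflected, so it suffices to start from a self-adjoint $u \in \cl A \otimes (\ell_n^\infty \sqcup \ell_k^\infty)$ strictly positive in $\cl B \otimes_{\max}(\ell_n^\infty \sqcup \ell_k^\infty)$, write $u = \sum s_i \otimes \dot e_i$, invoke the bridge in $\cl B$ to produce an interpolating $b \in \cl B$, apply TR$(n,k)$ to obtain an interpolating $a \in \cl A$, and close the loop by the bridge applied in $\cl A$.

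The argument is essentially a translation through Lemma \ref{Lem TR char}, so I do not foresee a deep obstacle. The only points requiring care are the sign bookkeeping in passing from the $x_i$'s to the $s_i$'s, the verification that every self-adjoint element of $\cl A \otimes (\ell_n^\infty \sqcup \ell_k^\infty)$ admits a self-adjoint lift (which holds because the quotient kernel is spanned by a real self-adjoint vector, so taking real parts of any lift preserves the representative modulo the kernel), and the routine equivalence, via the common Archimedean unit $1\otimes 1$, between reflection of positivity and reflection of strict positivity.
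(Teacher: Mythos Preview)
Your proposal is correct and follows essentially the same route as the paper: both proofs identify $\ell_n^\infty \sqcup \ell_k^\infty$ with the quotient $\ell_{n+k}^\infty/\mathrm{span}\{(e_n,-e_k)\}$, reduce to self-adjoint representatives, and then translate back and forth between strict positivity of $\sum s_i\otimes\dot e_i$ and the existence of an interpolant via Lemma~\ref{Lem TR char}. If anything, your treatment is slightly more careful than the paper's in spelling out why reflecting positivity and reflecting strict positivity are equivalent under a common Archimedean unit.
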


\begin{proof} Before we start the proof we identify $  \ell_n^\infty \sqcup \ell_k^\infty \cong  \ell_{n+k}^\infty/{\rm span}\{ (e_n, -e_k) \} $. Note that $\{\dot e_i\}_{i=1}^{n+k}$ is not basis for the quotient, which can be called a frame. However if $\sum_{i} a_i \otimes \dot e_i$ is a self-adjoint element then we may suppose that each $a_i$ is self-adjoint.

\smallskip

(1) $\Rightarrow$ (2). To show that the canonical map is an order embedding we need to prove the following: if $\sum a_i \otimes \dot e_i$ is positive in $\cl B \otimes_{\max} \left(\ell_{n+k}^\infty/{\rm span}\{ (e_n, -e_k) \} \right)$ then it must be positive in $\cl A \otimes_{\max} \left(\ell_{n+k}^\infty/{\rm span}\{ (e_n, -e_k) \} \right)$. Here first note that we may suppose each $a_i$ is self-adjoint. Also, a moment of thought shows that we may suppose that $\sum a_i \otimes \dot e_i$ is a strictly positive element. The above lemma shows that there exists $b$ in $\cl B$ such that $- a_i< b < a_j$ for $i=1,...,n$ and $j = n+1,...,n+k$. As $\cl A$ has TR$(n,k)$-property in $\cl B$, it follows that there is an element $a \in \cl A$ with the same property. But this means, by the above lemma, that  $\sum a_i \otimes \dot e_i$ is a strictly positive element of   $\cl A \otimes_{\max} \left(\ell_{n+k}^\infty/{\rm span}\{ (e_n, -e_k) \} \right)$. This proves our claim.

\smallskip

(2) $\Rightarrow$ (1). Let $a_1,...,a_n < b < a_{n+1},...,a_{n+k}$ be given where $a_i \in \cl A$ for each $i$ and $b\in B$. By the above lemma the element
$$
-a_1 \otimes \dot e_1 - \cdots  - a_n \otimes \dot e_n + a_{n+1} \otimes \dot e_{n+1} + \cdots + a_{n+k} \otimes \dot e_{n+k}
$$
must be strictly positive in $\cl B \otimes_{\max} \left(\ell_{n+k}^\infty/{\rm span}\{ (e_n, -e_k) \} \right)$. The order isomorphism guarantees that it must be positive in $\cl A \otimes_{\max} \left(\ell_{n+k}^\infty/{\rm span}\{ (e_n, -e_k) \} \right)$ too. By the same lemma, there exists a self-adjoint $a \in \cl A$ with the desired property. This finishes our proof.
\end{proof}

\begin{proof}[proof of Theorem 0.3] As $\ell_2^\infty \sqcup \ell_2^\infty$ is a C*-nuclear operator system (Proposition \ref{NP_2}) we have a canonical complete order embedding $\cl A \otimes_{\max} (\ell_2^\infty \sqcup \ell_2^\infty) \subseteq \cl B \otimes_{\max}(\ell_2^\infty \sqcup \ell_2^\infty)$. So by the above lemma $\cl A$ has TR(2,2)-property in $\cl B$.
\end{proof}

\begin{proof}[proof of Theorem 0.4] (1) $\Leftrightarrow$ (2). By Theorem \ref{main WRI}, $\cl A$ is w.r.i$.$ in $\cl B$ if and only if we have complete order embedding
$$
\cl A \otimes_{\max} \left( \ell_2^\infty \sqcup \ell_3^\infty \right) \subseteq 
\cl B \otimes_{\max} \left( \ell_2^\infty \sqcup \ell_3^\infty \right).
$$
With the identification $M_m(\cl A \otimes_{\max} \left( \ell_2^\infty \sqcup \ell_3^\infty \right)) \cong M_m(\cl A) \otimes_{\max} \left( \ell_2^\infty \sqcup \ell_3^\infty \right)$, and similarly for $\cl B$, this is equivalent to saying that we have an order embedding
$$
M_m(\cl A )\otimes_{\max} \left( \ell_2^\infty \sqcup \ell_3^\infty \right) \subseteq 
M_m(\cl B) \otimes_{\max} \left( \ell_2^\infty \sqcup \ell_3^\infty \right)
$$
for every $m$. By the above lemma this is equivalent to $M_m(\cl A)$ having TR(2,3)-property in $M_m(\cl B)$ for each $m$, or $\cl A$ having the complete TR(2,3)-property in $\cl B$.

\smallskip

The equivalence of  (1) and (3) can be shown identically. This completes our proof.
\end{proof}

We turn our attention to Connes' Embedding Problem on the embedding of II$_1$-factors.
Kirchberg proved, in his seminal paper \cite{Kirchberg94}, that this conjecture is equivalent to Kirchberg's Conjecture which states that every every C*-algebra with the local lifting property has the weak expectation property. His conjecture is equivalent to $C^*(\bb F_\infty)$ having weak expectation property. In Theorem 0.5 we express this problem in terms of Tight Riesz interpolation property and Arveson's extension property under certain order conditions on the morphism. We will start with the following observation:

\begin{proposition} Let $\cl S$ be an operator system. If $\cl S \otimes_{\min} C^*(\bb F_\infty) = \cl S \otimes_{\max} C^*(\bb F_\infty)$ holds order isomorphically  then the equality holds completely order isomorphically.
\end{proposition}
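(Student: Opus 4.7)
The plan is to reduce the complete order equality to a family of $M_1$-level equalities for tensors with $M_n(C^*(\bb F_\infty))$, and then import each of these back from the hypothesis on $C^*(\bb F_\infty)$ using the local lifting property. First I would exploit the nuclearity of $M_n$ together with associativity of both tensor products to identify, for $\tau\in\{\min,\max\}$,
$$
M_n(\cl S \otimes_\tau C^*(\bb F_\infty)) \;\cong\; M_n(\cl S) \otimes_\tau C^*(\bb F_\infty) \;\cong\; \cl S \otimes_\tau M_n(C^*(\bb F_\infty)).
$$
Consequently, the statement $\cl S \otimes_{\min} C^*(\bb F_\infty) = \cl S \otimes_{\max} C^*(\bb F_\infty)$ as operator systems is equivalent to the $M_1$-level identity
$(\cl S \otimes_{\min} M_n(C^*(\bb F_\infty)))^+ = (\cl S \otimes_{\max} M_n(C^*(\bb F_\infty)))^+$ for every $n$.

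Next I would set up a comparison between $M_n(C^*(\bb F_\infty))$ and $C^*(\bb F_\infty)$. Since $M_n(C^*(\bb F_\infty))$ is separable and topologically generated by countably many unitaries, and $C^*(\bb F_\infty)$ is the universal unital C*-algebra on countably many unitaries, there is a surjective unital $*$-homomorphism $\pi : C^*(\bb F_\infty) \twoheadrightarrow M_n(C^*(\bb F_\infty))$. Kirchberg's theorem recalled in the preamble gives that $C^*(\bb F_\infty)$ has LLP, and the preamble also notes that LLP is preserved under matrix amplifications, so $M_n(C^*(\bb F_\infty))$ has LLP. Applying LLP to $\pi$ and to the identity on $M_n(C^*(\bb F_\infty))$, every finite-dimensional unital operator subsystem $\cl C_0 \subseteq M_n(C^*(\bb F_\infty))$ admits a ucp lift $\iota_0 : \cl C_0 \to C^*(\bb F_\infty)$ with $\pi \circ \iota_0 = \mathrm{id}_{\cl C_0}$.

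Finally I would close the argument with a short diagram chase. Given $X \in (\cl S \otimes_{\min} M_n(C^*(\bb F_\infty)))^+$, write $X \in \cl S \otimes \cl C_0$ for some finite-dimensional unital subsystem $\cl C_0$; by injectivity of the minimal tensor product $X$ remains positive in $\cl S \otimes_{\min} \cl C_0$. Functoriality of $\otimes_{\min}$ yields $(\mathrm{id}_{\cl S} \otimes \iota_0)(X) \in (\cl S \otimes_{\min} C^*(\bb F_\infty))^+$, which by hypothesis also lies in $(\cl S \otimes_{\max} C^*(\bb F_\infty))^+$. Applying the ucp map $\mathrm{id}_{\cl S} \otimes \pi$ preserves max-positivity, and the identity $\pi \circ \iota_0 = \mathrm{id}_{\cl C_0}$ returns $X$ itself, now recognised as an element of $(\cl S \otimes_{\max} M_n(C^*(\bb F_\infty)))^+$. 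The main obstacle is precisely the appeal to LLP: with a hypothetical global ucp section of $\pi$ the argument would be trivial, but no such section is known to exist, and it is LLP that lets us work around this by constructing sections on each finite-dimensional subsystem where the test element $X$ actually lives.
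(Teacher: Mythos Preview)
Your argument is correct and follows essentially the same route as the paper: reduce to first-level positivity in $\cl S \otimes M_n(C^*(\bb F_\infty))$ via the identification $M_n(\cl S \otimes_\tau C^*(\bb F_\infty)) \cong \cl S \otimes_\tau M_n(C^*(\bb F_\infty))$, pick a surjective $*$-homomorphism $\pi: C^*(\bb F_\infty)\twoheadrightarrow M_n(C^*(\bb F_\infty))$, use LLP of $M_n(C^*(\bb F_\infty))$ to lift the inclusion of a finite-dimensional subsystem containing the test element, and push and pull through the resulting diagram. The only cosmetic difference is that the paper also restricts to a finite-dimensional $\cl S_0\subset\cl S$ on the left tensorant, which is not needed for the argument.
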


\begin{proof} We first fix a positive element $X\geq 0$ in $M_n(\cl S \otimes_{\min} C^*(\bb F_\infty) )$ that lies in the algebraic tensor product $M_n(\cl S \otimes C^*(\bb F_\infty) )$. (In fact we do not require completion of the tensor product.) We wish to show that $X$ is positive in $M_n(\cl S \otimes_{\max} C^*(\bb F_\infty) )$. We will use the identification $M_n(\cl S \otimes_{\min} C^*(\bb F_\infty) ) \cong \cl S \otimes_{\min} M_n(C^*(\bb F_\infty) )$ which also holds for the maximal tensor product. Let $\cl S_0 \subset \cl S$ and $\cl T_0 \subset M_n(C^*(\bb F_\infty) )$ be finite dimensional operator subsystems such that $X\in \cl S_0 \otimes \cl T_0$. By the injectivity of the min, $X$ is positive in $\cl S_0 \otimes_{\min} \cl T_0$. Let $\pi: C^* (\bb F_\infty) \rightarrow M_n(C^*(\bb F_\infty) )$ be a surjective unital $*$-homomorphism. Since $ M_n(C^*(\bb F_\infty) )$ has the local lifting property, there exists a ucp map $\gamma: \cl T_0 \rightarrow  C^*(\bb F_\infty) $ such that $\pi \circ \gamma$ coincides with the inclusion of $\cl T_0$ into $ M_n(C^*(\bb F_\infty) )$. Consider the maps
$$
\cl S_0 \otimes_{\min} \cl T_0 \xrightarrow{i \otimes \gamma} \cl S \otimes_{\min} C^*(\bb F_\infty) = 
\cl S \otimes_{\max} C^*(\bb F_\infty) \xrightarrow{\id \otimes \pi} \cl S \otimes_{\max} M_n(C^*(\bb F_\infty) ).
$$
Note that the first and and the last maps are completely positive and the middle equality holds order isomorphically. So the composition of these maps are positive which implies that $X\geq 0$ in $ \cl S \otimes_{\max} M_n(C^*(\bb F_\infty) ) \cong M_n(\cl S \otimes_{\max} C^*(\bb F_\infty) )$. Since the canonical map from $\cl S \otimes_{\max} C^*(\bb F_\infty)$ to $\cl S \otimes_{\min} C^*(\bb F_\infty)$ is readily completely positive the result follows.
\end{proof}

\begin{proof}[proof of Theorem 0.5] (1) $\Leftrightarrow$ (2). By Lemma \ref{TR & tensor}, $C^*(\bb F_\infty)$ has TR(2,3)-property in $B(\cl H)$ if and only if we have an order inclusion $$C^*(\bb F_\infty) \otimes_{\max} \left( \ell_2^\infty \sqcup \ell_3^\infty \right) \subseteq B(\cl H) \otimes_{\max} \left( \ell_2^\infty \sqcup \ell_3^\infty \right).$$
Since $\ell_2^\infty \sqcup \ell_3^\infty$ has the lifting property we have a canonical complete order isomorphism
$$
B(\cl H) \otimes_{\min} \left( \ell_2^\infty \sqcup \ell_3^\infty \right) = B(\cl H) \otimes_{\max} \left( \ell_2^\infty \sqcup \ell_3^\infty \right).
$$
As the minimal tensor product is injective the above inclusion is equivalent to the statement that we have an order isomorphism 
$$
C^*(\bb F_\infty) \otimes_{\min} \left( \ell_2^\infty \sqcup \ell_3^\infty \right)
=C^*(\bb F_\infty) \otimes_{\max} \left( \ell_2^\infty \sqcup \ell_3^\infty \right).
$$
By the above lemma this equality holds completely order isomorphically, equivalently, by Theorem 5.7 of \cite{kavruk2012}, $C^*(\bb F_\infty)$ has WEP, in other words, Connes' embedding problem has an affirmative solution. 

(1) $\Leftrightarrow$ (5). For an operator system $\cl S$ and positive linear functionals $\varphi_i \in \cl S^*$ satisfying $\varphi_1 + \varphi_2 = \varphi_3 + \varphi_4  = \varphi_5 + \varphi_6  $ there corresponds a positive map 
$$
\varphi: \cl S \rightarrow \sqcap_{i=1}^3 \ell_2^\infty  = 
\{ (\alpha_i)_{i=1}^6: \; \alpha_1 + \alpha_2 = \alpha_3 +\alpha_4 = \alpha_5 + \alpha_6\} \subset \ell^\infty_6
$$
given by $s \mapsto (\varphi_1(s),...,\varphi_6(s))$. Conversely, for every positive maps from $\cl S $ to $ \sqcap_{i=1}^3 \ell_2^\infty$ we obtain such positive linear functionals. Keeping this observation in mind, the condition in (5) is equivalent to the statement that every positive maps from  $C^*(\bb F_\infty) $ to $\sqcap_{i=1}^3 \ell_2^\infty$ extends a positive map on $B(\cl H)$. Since operator system dual of $\sqcap_{i=1}^3 \ell_2^\infty$ is completely order isomorphic to $\sqcup_{i=1}^3 \ell_2^\infty$, by the representation of the maximal tensor product, the last condition is equivalent to the statement that every positive linear functional on $C^*(\bb F_\infty) \otimes_{\max} \left(\sqcup_{i=1}^3\ell_2^\infty \right)$ extends to a positive linear functional on $B(\cl H) \otimes_{\max} \left(\sqcup_{i=1}^3\ell_2^\infty \right)$. This is same as saying that we have an order embedding
$$
C^*(\bb F_\infty) \otimes_{\max} \left(\sqcup_{i=1}^3\ell_2^\infty \right) \subseteq
B(\cl H) \otimes_{\max} \left(\sqcup_{i=1}^3\ell_2^\infty \right).
$$
Simply following the steps of the above proof (1) $\Leftrightarrow$ (2), one can show that the last statement is equivalent to Connes' embedding conjecture. 

\smallskip

 (3) $\Leftrightarrow$ (5). The equivalence of these two statements is similar to proof of Theorem 0.2 (equivalence of (2) and (4)) so we skip it.
 
 \smallskip
 
  (1) $\Leftrightarrow$ (4). Let $R$ be the quotient map from  $B(\cl H)^*$ to $C^*(\bb F_\infty)^*$ given by the restriction. The condition in (4) is equivalent to the statement that every cp map $\ell^\infty_2 \sqcup \ell^\infty_3$ to $C^*(\bb F^\infty)^*$ lifts to a cp map on  $B(\cl H)^*$:

$$
\xymatrix{
\ell^\infty_2 \sqcup \ell^\infty_3 \ar[rr]^{\gamma} \ar[rrd]_{\tilde \gamma} & & C^*(\bb F_\infty)^*\\
& & B(\cl H)^{*} \ar[u]_{R} 
}
$$
By the representation of the maximal tensor product this is equivalent to saying that every positive linear functionals on $\ell^\infty_2 \sqcup \ell^\infty_3 \otimes_{\max}  C^*(\bb F_\infty) $ extends to a positive linear functional on  $\ell^\infty_2 \sqcup \ell^\infty_3 \otimes_{\max}  B(\cl H) $. A moment of thought shows that this condition is equivalent to the statement that we have an order inclusion
$$
\ell^\infty_2 \sqcup \ell^\infty_3 \otimes_{\max}  C^*(\bb F_\infty) \subseteq \ell^\infty_2 \sqcup \ell^\infty_3 \otimes_{\max}  B(\cl H).
$$
Now as in the above proof, (1) $\Leftrightarrow$ (2), the last statement is equivalent to Connes' embedding conjecture.

\smallskip

(1) $\Rightarrow $ (6). In fact, if $\cl A \subseteq \cl B$ is w.r.i$.$ in $\cl B$, then for every finite dimensional operator system $\cl T$, every cp map $\varphi$ from $\cl A$ to $\cl T$ extends to cp map on $\cl B$. To see this one can first take the adjoint of $\varphi$ twice, $\varphi^{**} : \cl A^{**} \rightarrow \cl T^{**}\cong \cl T$, then compose it with the map extending the canonical inclusion $\cl A \hookrightarrow \cl A^{**}$ on $\cl B$. Keeping this in mind, assuming that Connes' embedding problem has an affirmative answer, that is, $C^{*}(\bb F_\infty)$ is w.r.i$.$ in $B(\cl H)$, every cp map $\gamma : C^{*}(\bb F_\infty) \rightarrow M_3 \sqcap M_3$ extends to a cp map from $B(\cl H)$ to $M_3 \sqcap M_3$, where the pullback $M_3 \sqcap M_3$ performed via tracial states. It is not hard to see that this coincides with the statement appears in (6).

\smallskip (6) $\Rightarrow $ (1). Consider $ \ell^{\infty}_3 \xrightarrow{i} M_3 \xrightarrow{p} \ell^{\infty}_3$, where $i$ is diagonal inclusion of $\ell^{\infty}_3$ into $M_3$ and $p$ is the unique conditional expectation from $M_3$ onto diagonals. The condition on $(6)$ is same as saying that every cp map from $C^{*}(\bb F_\infty)$ to $M_3 \sqcap M_3$ extends to a cp map on $B(\cl H)$ where the pullback is performed via tracial states. Now, by considering $i$ and $p$ above, this implies that every map from $C^{*}(\bb F_\infty)$ to $\ell^{\infty}_3 \sqcap \ell^{\infty}_3$ extends to a cp map from $B(\cl H)$ to $\ell^{\infty}_3 \sqcap \ell^{\infty}_3$, where the pullback is performed via the states $w(\alpha_1, \alpha_2, \alpha_3) = (\alpha_1+ \alpha_2+ \alpha_3)/3$. The predual of the operator system $\ell^{\infty}_3 \sqcap \ell^{\infty}_3$ is $\ell^{\infty}_3 \sqcup \ell^{\infty}_3$, therefore, by the representation of the maximal tensor product, this is same as saying that every positive linear functional on $\ell^\infty_3 \sqcup \ell^\infty_3 \otimes_{\max}  C^*(\bb F_\infty) $ extends to a positive linear functional on  $\ell^\infty_3 \sqcup \ell^\infty_3 \otimes_{\max}  B(\cl H) $. As above, this implies that we have an order inclusion
$$
\ell^\infty_3 \sqcup \ell^\infty_3 \otimes_{\max}  C^*(\bb F_\infty) \subseteq \ell^\infty_3 \sqcup \ell^\infty_3 \otimes_{\max}  B(\cl H).
$$
Now simply following the steps of the above proof, (1) $\Leftrightarrow$ (2), the last statement is equivalent to Connes' embedding conjecture. 
\end{proof}

$ $

In the last part of this section we work on C*TR($n,k$)-property then we discuss some of its consequences. We start with the following observation:
\begin{proposition}\label{System TR}
Let $\cl B$ be a C*-algebra, $\cl S \subseteq \cl B$ be an operator subsystem and C*$\{\cl S\}$ be the C*-algebra generated by $\cl S$. The following are equivalent:
\begin{enumerate}
\item $\cl S$ has C*TR($n,k$)-property in $\cl B$;
\smallskip
\item the operator system structure on the tensor product $\cl S \otimes \left( \ell_2^\infty \sqcup \ell_3^\infty \right)$ arising from the inclusion  $\cl B \otimes_{\max} \left( \ell_2^\infty \sqcup \ell_3^\infty \right)$, say $\tau_1$, coincides with the structure arising from the inclusion  $C^*\{\cl S\} \otimes_{\max} \left( \ell_2^\infty \sqcup \ell_3^\infty \right)$, say $\tau_2$, order isomorphically.
\end{enumerate}
\end{proposition}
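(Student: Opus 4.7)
The plan is to reduce the equivalence to a dictionary between strict positivity and tight interpolation, by invoking Lemma \ref{Lem TR char} twice: once inside $\cl B$ and once inside $C^*\{\cl S\}$. Through the quotient identification $\ell_n^\infty \sqcup \ell_k^\infty \cong \ell_{n+k}^\infty / \mathrm{span}\{(e_n,-e_k)\}$, every self-adjoint element of $\cl S \otimes (\ell_n^\infty \sqcup \ell_k^\infty)$ is of the form $\sum_{i=1}^{n+k} s_i \otimes \dot e_i$ with $s_i \in \cl S_{sa}$, and the Lemma translates strict positivity of such an element inside $\cl B \otimes_{\max} (\ell_n^\infty \sqcup \ell_k^\infty)$ (respectively inside $C^*\{\cl S\} \otimes_{\max} (\ell_n^\infty \sqcup \ell_k^\infty)$) into the existence of an interpolant in $\cl B_{sa}$ (respectively in $C^*\{\cl S\}_{sa}$).

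Before either implication I would record two preliminaries. First, the $*$-embedding $C^*\{\cl S\} \hookrightarrow \cl B$ gives by functoriality of $\max$ a canonical cp map $C^*\{\cl S\} \otimes_{\max} (\ell_n^\infty \sqcup \ell_k^\infty) \to \cl B \otimes_{\max} (\ell_n^\infty \sqcup \ell_k^\infty)$, so the $\tau_2$-positive cone is automatically contained in the $\tau_1$-positive cone. Second, both $\tau_1$ and $\tau_2$ are Archimedean orderings on $\cl S \otimes (\ell_n^\infty \sqcup \ell_k^\infty)$ sharing the common order unit $1 \otimes 1$, so to obtain $\tau_1 = \tau_2$ it suffices to prove that the strictly $\tau_1$-positive and strictly $\tau_2$-positive cones coincide: indeed, for any $\tau_1$-positive $X$ each perturbation $X + \epsilon(1\otimes 1)$ would then be strictly $\tau_2$-positive, and Archimedeanness of $\tau_2$ would yield $X \in \mathrm{pos}(\tau_2)$.

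With this reduction in place, $(1)\Rightarrow (2)$ runs as follows: take $X=\sum s_i \otimes \dot e_i$ strictly $\tau_1$-positive, apply Lemma \ref{Lem TR char} inside $\cl B$ to produce an interpolant $b \in \cl B_{sa}$, invoke the C*TR$(n,k)$-property of $\cl S$ in $\cl B$ to replace $b$ by some $a \in C^*\{\cl S\}_{sa}$, then apply the same Lemma inside $C^*\{\cl S\}$ to recover strict $\tau_2$-positivity of $X$. Conversely, for $(2)\Rightarrow (1)$, given $x_1,\ldots,x_n < b < y_1,\ldots,y_k$ in $\cl B$ with $x_i, y_j \in \cl S_{sa}$, the element $-\sum_{i=1}^n x_i \otimes \dot e_i + \sum_{j=1}^k y_j \otimes \dot e_{n+j}$ is strictly $\tau_1$-positive by the Lemma, hence strictly $\tau_2$-positive by hypothesis, and a final application of the Lemma inside $C^*\{\cl S\}$ supplies the required interpolant in $C^*\{\cl S\}_{sa}$. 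The one delicate point is the Archimedean passage from strict to non-strict positivity, but this is exactly what the reduction in the second paragraph handles, so no further obstacle is expected.
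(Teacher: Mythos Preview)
Your proposal is correct and follows essentially the same approach as the paper: both arguments use the quotient identification $\ell_n^\infty \sqcup \ell_k^\infty \cong \ell_{n+k}^\infty/\mathrm{span}\{(e_n,-e_k)\}$ and invoke Lemma~\ref{Lem TR char} twice to translate strict positivity into interpolation, first in $\cl B$ and then in $C^*\{\cl S\}$. If anything, your write-up is slightly more careful than the paper's, since you make the Archimedean reduction (passing from equality of strictly positive cones to equality of positive cones) explicit, whereas the paper leaves this step implicit.
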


\begin{proof} (1) $\Rightarrow$ (2). Note that the canonical map $C^*\{\cl S\} \otimes_{\max} \left( \ell_n^\infty \sqcup \ell_k^\infty \right) \rightarrow \cl B \otimes_{\max} \left( \ell_n^\infty \sqcup \ell_k^\infty \right)$ is completely positive. So its restriction $\cl S \otimes_{\tau_2} \left( \ell_n^\infty \sqcup \ell_k^\infty \right) \rightarrow \cl S \otimes_{\tau_1} \left( \ell_n^\infty \sqcup \ell_k^\infty \right)$ is readily a completely positive map. Therefore we need to prove that the inverse is positive. We use the identification $\ell_n^\infty \sqcup \ell_k^\infty \cong \ell^\infty_{n+k}/{\rm span}\{(e_n,-e_k)\}$. Let $\sum_{i=1}^{n+k} s_i \otimes \dot e_i$ be a strictly positive element in $ \cl S \otimes_{\tau_1} \ell^\infty_{n+k}/{\rm span}\{(e_n,-e_k)\}$. Here we may suppose that each $s_i$ is self-adjoint. By the definition of $\tau_1$, it is strictly positive in $ \cl B \otimes_{\max} \ell^\infty_{n+k}/{\rm span}\{(e_n,-e_k)\}$. By Lemma \ref{Lem TR char}, there exists a self-adjoint $b\in \cl B$ such that $-s_i<b<s_j$ for $i=1,...,n$ and $j= n+1,...,n+k$. Since $\cl S$ has C*TR($n,k$)-property in $\cl B$ there exists self-adjoint $a\in\,$C*$\{\cl S\}$ with the same property. Again, by Lemma \ref{Lem TR char},   $\sum s_i \otimes \dot e_i$ is strictly positive in $ C^*\{\cl S\} \otimes_{\max} \ell^\infty_{n+k}/{\rm span}\{(e_n,-e_k)\}$, so by the definition of $\tau_2$, it is strictly positive in $ \cl S \otimes_{\tau_2} \ell^\infty_{n+k}/{\rm span}\{(e_n,-e_k)\}$. This proves our claim.

\smallskip

(2) $\Rightarrow$ (1). Consider $s_1,...,s_n < b < s_{n+1},...,s_{n+k}$ where $s_i\in \cl S$ and $b\in \cl B$. We wish to show that there exists $a\in C^*\{\cl S\}$ with the same property. By Lemma \ref{Lem TR char}, $$
-s_1 \otimes \dot e_1 -\cdots - s_n \otimes \dot e_n + s_{n+1} \otimes \dot e_{n+1} + \cdots + s_{n+k} \otimes \dot e_{n+k}   
$$ is strictly positive in $ \cl B \otimes_{\max} \ell^\infty_{n+k}/{\rm span}\{(e_n,-e_k)\}$. So the definition of $\tau_1$ and the order isomorphism of $\tau_1$ and $\tau_2$ imply that the above element is strictly positive in
$$
\cl S \otimes_{\tau_2} \ell^\infty_{n+k}/{\rm span}\{(e_n,-e_k)\} \subseteq: C^*(\cl S) \otimes_{\max} \ell^\infty_{n+k}/{\rm span}\{(e_n,-e_k)\}.
$$
Thus, By Lemma \ref{Lem TR char}, there exists $a \in C^*\{\cl S\}$ with the desired property.
\end{proof}

\begin{corollary}
If $\cl S$ is a nuclear operator system then for every inclusion $\cl S \subseteq \cl B$, where $\cl B$ is a unital C*-algebra, $\cl S$ has the complete C*TR$(n,k)$-property in $\cl B$.
\end{corollary}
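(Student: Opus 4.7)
The plan is to invoke Proposition \ref{System TR} at each matrix level and then use nuclearity to force the two tensor structures $\tau_1$ and $\tau_2$ there to coincide. By the definition of the complete $C^*$TR$(n,k)$-property it suffices to show, for every $m$, that $M_m(\cl S)$ has the $C^*$TR$(n,k)$-property in $M_m(\cl B)$; here one uses the routine identification $C^*\{M_m(\cl S)\}=M_m(C^*\{\cl S\})$ inside $M_m(\cl B)$, which is immediate since $M_m(\cl S)$ contains $e_{ij}\otimes s$ for every $s\in\cl S$ and products give $M_m\otimes C^*\{\cl S\}$.

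The first step is to observe that nuclearity passes to matrix amplifications: for any operator system $\cl T$ we have
$$M_m(\cl S)\otimes_{\max}\cl T \;=\; M_m\bigl(\cl S\otimes_{\max}\cl T\bigr) \;=\; M_m\bigl(\cl S\otimes_{\min}\cl T\bigr) \;=\; M_m(\cl S)\otimes_{\min}\cl T,$$
using the standard compatibility of $M_m$ with the max and min tensor products together with nuclearity of $\cl S$. Hence $M_m(\cl S)$ is itself a nuclear operator system.

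The heart of the argument is the following general claim: if $\cl S_0$ is a nuclear operator system and $\cl S_0\subseteq\cl A$ is a unital inclusion into a C*-algebra, then for every operator system $\cl T$ the canonical map $\cl S_0\otimes_{\max}\cl T \to \cl A\otimes_{\max}\cl T$ is a complete order embedding. To see this, consider the commuting diagram of completely positive maps
$$\xymatrix{
\cl S_0\otimes_{\max}\cl T \ar[r] \ar[d]_{=} & \cl A\otimes_{\max}\cl T \ar[d] \\
\cl S_0\otimes_{\min}\cl T \ar@{^{(}->}[r] & \cl A\otimes_{\min}\cl T
}$$
in which the left vertical map is a complete order isomorphism by nuclearity of $\cl S_0$ and the bottom map is a complete order embedding by injectivity of the minimal tensor product. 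The composition $\cl S_0\otimes_{\max}\cl T\to \cl A\otimes_{\min}\cl T$ is therefore a complete order embedding, which forces the top horizontal map to be a complete order embedding as well.

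Apply this claim with $\cl S_0 = M_m(\cl S)$, $\cl T = \ell_2^\infty\sqcup\ell_3^\infty$ (or more generally $\ell_n^\infty\sqcup\ell_k^\infty$), and $\cl A$ equal either to $M_m(C^*\{\cl S\})$ or to $M_m(\cl B)$. Both structures $\tau_2$ (from $M_m(C^*\{\cl S\})\otimes_{\max}(\ell_n^\infty\sqcup\ell_k^\infty)$) and $\tau_1$ (from $M_m(\cl B)\otimes_{\max}(\ell_n^\infty\sqcup\ell_k^\infty)$) therefore coincide with the intrinsic max structure on $M_m(\cl S)\otimes(\ell_n^\infty\sqcup\ell_k^\infty)$, so in particular $\tau_1=\tau_2$ completely order isomorphically. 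Proposition \ref{System TR} now gives $M_m(\cl S)$ the $C^*$TR$(n,k)$-property in $M_m(\cl B)$ for each $m$, completing the proof.

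The only potential obstacle is bookkeeping: verifying the identification $C^*\{M_m(\cl S)\}=M_m(C^*\{\cl S\})$ and checking that Proposition \ref{System TR} is being applied at the correct matrix level. The substantive content is entirely concentrated in the nuclearity-plus-injectivity-of-min sandwich that trivialises the comparison of $\tau_1$ and $\tau_2$.
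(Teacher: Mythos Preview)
Your proof is correct and follows the approach the paper intends: the corollary is stated in the paper without proof, immediately after Proposition \ref{System TR}, and the implicit argument is precisely the nuclearity sandwich you carry out (both $\tau_1$ and $\tau_2$ lie between $\min$ and $\max$ on $\cl S\otimes(\ell_n^\infty\sqcup\ell_k^\infty)$, and nuclearity collapses these). Your treatment of the matrix levels via $M_m(\cl S)$ is a bit more explicit than strictly necessary---one could instead observe that the complete order isomorphism $\tau_1=\tau_2$ at the ground level already yields, via the identification $M_m(\cl S\otimes_{\tau_i}\cl T)\cong M_m(\cl S)\otimes_{\tau_i}\cl T$, the order isomorphism needed to invoke Proposition \ref{System TR} at each matrix level---but your route is entirely sound and the bookkeeping you flag (in particular $C^*\{M_m(\cl S)\}=M_m(C^*\{\cl S\})$) is handled correctly.
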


\begin{proof}[proof of Theorem 0.6] We let $\cl A$ denote the C*-algebra generated by $\cl S$ in $\cl B$. Assume first that $\cl S$ has the complete C*TR($n,k$)-property in $\cl B$.  By the above proposition we have a complete order isomorphism $\otimes_{\tau_1} = \otimes_{\tau_2}$ where
$$
\cl S \otimes_{\tau_1} \left( \ell_n^\infty \sqcup \ell_k^\infty  \right) \subseteq :
\cl B \otimes_{\max} \left( \ell_n^\infty \sqcup \ell_k^\infty  \right)
\subseteq \cl B \otimes_{\max} \left( \ell_n^\infty * \ell_k^\infty  \right)\,
$$
and
$$
\cl S \otimes_{\tau_2} \left( \ell_n^\infty \sqcup \ell_k^\infty  \right) \subseteq :
\cl A \otimes_{\max} \left( \ell_n^\infty \sqcup \ell_k^\infty  \right)
\subseteq \cl A \otimes_{\max} \left( \ell_n^\infty * \ell_k^\infty  \right).
$$
Here we observe that $\cl S \otimes_{\tau_2} \left( \ell_n^\infty \sqcup \ell_k^\infty  \right)$ must be a unitary operator subsystem of $ \cl A \otimes_{\max} \left( \ell_n^\infty * \ell_k^\infty  \right)$. We leave the elementary but tedious verification of this to the reader. Therefore, by Proposition 5.6 of \cite{kavruk2011}, $\cl A \otimes_{\max} \left( \ell_n^\infty * \ell_k^\infty  \right)$ must be the enveloping C*-algebra of $\cl S \otimes_{\tau_2} \left( \ell_n^\infty \sqcup \ell_k^\infty  \right)$. Now the rigidity of the enveloping C*-algebra ensures that the canonical unital $*$-homomorphism
$$
\cl A \otimes_{\max} \left( \ell_n^\infty * \ell_k^\infty  \right) \longrightarrow \cl B \otimes_{\max} \left( \ell_n^\infty * \ell_k^\infty  \right)
$$
is injective. Thus we have a canonical C*-algebra inclusion $\cl A \otimes_{\max} \left( \ell_n^\infty * \ell_k^\infty  \right) \subseteq  \cl B \otimes_{\max} \left( \ell_n^\infty * \ell_k^\infty  \right)$. Since $\ell_n^\infty \sqcup \ell_k^\infty  \subseteq \ell_n^\infty * \ell_k^\infty $  is an essential operator subsystem it follows that we have complete order inclusion $\cl A \otimes_{\max} \left( \ell_n^\infty \sqcup \ell_k^\infty \right) \subseteq \cl B \otimes_{\max} \left( \ell_n^\infty \sqcup \ell_k^\infty \right)$. So, by Lemma \ref{TR & tensor}, $\cl A$ has the complete TR($n,k$)-property in $\cl B$.

\smallskip

Conversely suppose that $\cl A$ has the complete TR($n,k$)-property in $\cl B$. By Lemma \ref{TR & tensor}, we have a complete order inclusion
$$
\cl A \otimes_{\max} \left( \ell_n^\infty \sqcup \ell_k^\infty \right) \subseteq \cl B \otimes_{\max} \left( \ell_n^\infty \sqcup \ell_k^\infty \right).
$$
It follows that the operator system structure on $\cl S \otimes_{} \left( \ell_n^\infty \sqcup \ell_k^\infty \right)$ arising from the inclusion $\cl B \otimes_{\max} \left( \ell_n^\infty \sqcup \ell_k^\infty \right)$ coincides with the structure arising from the inclusion $\cl A \otimes_{\max} \left( \ell_n^\infty \sqcup \ell_k^\infty \right)$ completely order isomorphically. So $\cl S$ has the complete C*TR($n,k$)-property in $\cl B$ by Proposition \ref{System TR}. 
\end{proof}

\begin{remark} In the statement of Theorem 0.6 with $\cl S \subseteq C^*\{\cl S\} \subseteq \cl B$ we require that $\cl S$ is a unitary operator subsystem of $C^*\{\cl S\}$. Without this condition we don't know any example of $\cl S \subseteq \cl B$ where $\cl S$ has a complete C*TR($n,k$)-property in $\cl B$ but $C^*\{\cl S\}$ does not have TR($n,k$)-property in $\cl B$. Also, if for every pair $\cl S \subseteq \cl B$ with $\cl S$ having complete C*TR($n,k$)-property in $\cl B$ if it is the case that $C^*\{\cl S\}$ has the TR($n,k$)-property in $\cl B$ then this solves Connes' Embedding Problem affirmatively. In fact we can fix a separable nuclear operator system $\cl S$ with the inclusions $\cl S \subseteq C^*_u(\cl S) \subseteq B(\cl H)$. Nuclearity assures that $\cl S$ has the complete C*TR(2,3)-property in $B(\cl H)$. So, with the previous assumption, $C_u^*(\cl S)$ has the complete TR(2,3)-property in $B(\cl H)$. This is same as saying that  $C_u^*(\cl S)$ is w.r.i$.$ in $B(\cl H)$, equivalently $C_u^*(\cl S)$ has WEP. Therefore, by Proposition 6.16 of \cite{kavruk2011}, Connes' Embedding Problem is affirmative. Finally it is worth mentioning that there exists operator systems such that for every embedding into a C*-algebra the only unitaries in the image are scalar multiples of the identity. To see this one can consider a universal operator system $\cl S$ in the sense of Kirchberg and Wassermann \cite{KW} and observe that the embedding of $\cl S$ into $C_u^*(\cl S)$ cannot hit any non-trivial unitary.
\end{remark}

\textbf{Acknowledgement.} This article is a part of study of relative weak injectivity for general operator systems. As the properties appear in the C*-algebra context fail for general operator systems we publish these results separately. In an upcoming paper we focus on w.r.i$.$ for operator systems and discuss weak rigitidy in this setting.

$ $

{\sc  Department of Mathematics \& Applied Mathematics,

Virginia Commonwealth University

Richmond, VA 23220, U.S.A.}

{\it E-mail address:} askavruk@vcu.edu


\begin{thebibliography}{10}


\bibitem{AT} C. D. Aliprantis, R. Tourky,
\newblock Cones and duality 
\newblock \textit{volume 84 of Graduate Studies
in Mathematics}. American Mathematical Society, Providence, RI, 2007


\bibitem{Arveson} W. B. Arveson, Subalgebras of C*-algebras I, Acta Math. 123 (1969) 141-224.


\bibitem{Blackadar} B$.$ E$.$ Blackadar, Weak Expectations and Injectivity in Operator Algebras, \textit{Proc. Amer. Math. Soc.} 68 (1978), 49-53. 2.

\bibitem{Blecher} D$.$ P$.$ Blecher, B$.$ L$.$ Duncan, Nuclearity-related properties for nonselfadjoint algebras, \textit{J. Op. Theory}
V$.$ 65, Issue 1, Winter 2011

\bibitem{BO} N. P. Brown, N. Ozawa,
\newblock C*-algebras and Finite-dimensional Approximations,
\newblock \textit{ volume 88 of Graduate Studies in Mathematics}. American Mathematical Society, 2008


\bibitem{Boca91} F. Boca,
\newblock Free products of completely positive maps and spectral sets, 
\newblock \textit{J. Functional Analysis} 97 (1991), 251-263.

\bibitem{ChoiEffros77}  M. Choi, E. Effros, Injectivity and operator spaces, \textit{J. Functional Analysis} 24 (1977) 156-209.


%\bibitem{ChoiEffros} M. Choi, E. Effros, Nuclear C*-algebras and injectivity- the general case,
%Indiana Un. Math. J., 26 (1977)

%\bibitem{ChoiEffros0} M. Choi, E. Effros, Nuclear C*-algebras and the approximation problem, {\it Amer. J. Math.} 100 (1978)



\bibitem{Connes} A. Connes, Classification of injective factors, cases II$_1$ , II$_{\infty}$ , III$_\lambda$ , $\lambda \neq 1$, \textit{Ann. Math.} 104 (1976) 73-116.


%\bibitem{Davidson} K.R. Davidson, 
%\newblock C* -algebras by example, 
%\newblock \textit{Fields Institute Monographs 6}, American Mathematical Society, Providence, RI, 1996


\bibitem{Effros} E. Effros, Aspects of non-commutative order,
in C*-algebras and applications to physics (Proc. 2nd Japan-USA Seminar, Los Angeles, 1977), eds.


\bibitem{Effros-Haagerup} E. Effros, U. Haagerup, Lifting problems and local reflexivity for C*-algebras, \textit{Duke Math. J.} 52 (1985), no. 1, 103-128


%\bibitem{EffrosLance} E. Effros, C. Lance, Tensor products of operator algebras, {\it Adv. in Math.}, 25 (1977), pp. 1-34



\bibitem{Effros-Ruan Book} E. Effros, Z. Ruan,
{\it Operator Spaces}, London Math. Soc. Monographs, New Series 23, Oxford University Press, New York 2000.

%\bibitem{farenick--kavruk--paulsen2011}
%D.~Farenick, A.~S. Kavruk, and V.~I. Paulsen.
%\newblock C$^*$-algebras with the weak expectation property and a multivariable
%  analogue of {A}ndo's theorem on the numerical radius.
%\newblock {\em J. Operator Theory}, to appear.



\bibitem{FKTT Char.WEP} D.~Farenick, A.~S. Kavruk, V.~I. Paulsen and I.~G. Todorov, Characterisations of the weak expectation property, \textit{New York Journal of Mathematics} 2015


\bibitem{FKPT-discrete}
D.~Farenick, A.~S. Kavruk, V.~I. Paulsen and I.~G. Todorov,
\newblock Operator systems from discrete groups,
{\it Comm. Math. Phys.}, Volume 329, Issue 1 (2014), Page 207-238)


\bibitem{farenick--paulsen2011}
D.~Farenick, V.~I. Paulsen,
\newblock Operator system quotients of matrix algebras and their tensor products, {\it Mathematica Scandinavica}, Vol. 111 (2012)

\bibitem{fritz} T. Fritz, 
\newblock Operator system structures on the unital direct sum of C*-algebras, 
\newblock Rocky Mountain
J$.$ of Math$.$ V$.$ 44, Number 3, 2014

\bibitem{GS} I. Goldbring, T. Sinclair,
\newblock Omitting types in operator systems,
\newblock Indiana Univ. Math. J., to appear.


%\bibitem{TF} T. Fritz.
%\newblock Tsirelson's problem and Kirchberg's conjecture
%\newblock Rev. Math. Phys. 24, 1250012 (2012) (67 pages)

\bibitem{Han} K. H. Han, 
\newblock On maximal tensor products and quotient maps of operator systems, \newblock \textit{J. Math. Anal. Appl.} 384(2):12 (2011)

%\bibitem{Han2} Han, Tensor products of AOU-spaces

\bibitem{Haagerup} U. Haagerup, Self-polar forms, conditional expectations and the weak expectation property
for C∗-algebras. Unpublished manuscript (1995).


%\bibitem{Han-Paulsen} K. H. Han, V. I. Paulsen,
%An approximation theorem for nuclear operator systems,
%{\it J. Functional Analysis}
%V 261, Issue 4 (2011)


\bibitem{Harpe} P. de La Harpe, \textit{Topics in geometric group theory}, {\rm The University of Chicago Press}, (2000)




\bibitem{JNPPSW}
M. Junge, M. Navascues, C. Palazuelos, D. Perez-Garcia, V. B. Scholz, R. F. Werner,
Connes' embedding problem and Tsirelson's problem,
{\it J. Math. Phys.} 52, 012102 (2011)



%\bibitem{Kadison} R.V. Kadison, 
%\newblock A representation theory for commutative topological algebra,
%\newblock \textit{Mem. Amer. Math. Soc.} 1951 (7) (1951).

 
\bibitem{kavruk2011}
A.~S. Kavruk,
\newblock Nuclearity related properties in operator systems.
{\it J. Op. Theory}, V 71 Issue 1 (2014) 


\bibitem{Kavruk NP} A.~S. Kavruk, On a non-commutative analogue of a classical result of Namioka and Phelps, \textit{J. Functional Analysis}
Vol 269, Issue 10, 15 November 2015, Pages 3282-3303



\bibitem{kavruk2012}
A.~S. Kavruk,
\newblock The weak expectation property and Riesz interpolation,
\newblock {\em preprint (arXiv:1201.5414)}, 2012.





\bibitem{KPTT Nuclearity}
A.~S. Kavruk, V.~I. Paulsen, I.~G. Todorov, and M.~Tomforde,
\newblock Quotients, exactness and nuclearity in the operator system category, {\it Adv. Math.} Volume 235, 1 March 2013, Pages 321-360

\bibitem{KPTT Tensor} A.~S. Kavruk, V.~I. Paulsen, I.~G. Todorov, and M.~Tomforde,
\newblock Tensor Products of Operator Systems,
{\it J. Functional Analysis}, Vol 261, Issue 2, (2011)


\bibitem{kerrli} D. Kerr and H. Li \newblock On Gromov-Hausdorff convergence for operator metric spaces, \newblock \textit{J. Operator Theory,} 62 (2009) 83-109.


\bibitem{Kirchberg94} E. Kirchberg, On non-semisplit extensions, tensor products and exactness of group C*-algebras, {\it Invent. Math.} 112 (1993) 449-489.

%\bibitem{KirchbergCAR} E. Kirchberg, On subalgebras of the Car-algebra, \textit{J. Functional Analysis}, V. 129, Issue 1, 1995, 35-63


\bibitem{Kirchberg-Presentation} E. Kirchberg,
Some properties of QWEP C∗-algebras, presentation at University of Copenhagen, 2012

\bibitem{KW} E. Kirchberg, S. Wassermann,
C*-Algebras Generated by Operator Systems,
{\it J. Functional Analysis} V 155, Issue 2, (1998)


\bibitem{Lance} C. Lance, \newblock Tensor products and nuclear C*-algebras, \newblock \textit{Proceedings of Symposia in Pure Mathematics,} Vol.
38 (1982) 379-399.


\bibitem{Lance2} C. Lance, On nuclear C*-algebras, \textit{J. Functional Analysis} 12(1973), 157-176.


\bibitem{JIAN LIANG} J$.$ Liang, Operator-Valued Kirchberg Theory and Its Connection to Tensor Norms and Correspondence, \textit{Doctoral Dissertation}, University of Illinois Urbana-Champaign (2015) 





\bibitem{NP} I$.$ Namioka and R$.$ R$.$ Phelps,
\newblock Tensor products of compact convex sets,
\newblock Pacific J. Math. Volume 31, Number 2 (1969), 469-480

%\bibitem{ozawa} N. Ozawa.
%\newblock On the QWEP conjecture.
%\newblock \textit{Int. J. Math.} 15 (2004), no. 5, 501Ð530.


%\bibitem{OZAWA-duality}
%N. Ozawa,
%\newblock About the Connes embedding conjecture -algebraic approaches, {\it Jpn. J. Math.} 8 (2013), 147-183


\bibitem{PaulsenBook} V. I. Paulsen, {\it Completely bounded maps and operator algebras}, Cambridge Studies in Advanced Mathematics 78, Cambridge University Press, 2002.

\bibitem{Paulsen WEP} V. I. Paulsen, Weak Expectations and the Injective Envelope, \textit{Trans. Amer. Math. Soc.} V. 363 (2011) 4735-4755

\bibitem{Paulsen-Tomforde} V. I. Paulsen and M. Tomforde, Vector spaces with an order unit, Indiana Univ. Math. J.,
58 (3) December 2007

%\bibitem{Paulsen-Tomforde-Todorov} 
%V.~I. Paulsen, I.~G. Todorov, and M.~Tomforde,
%Operator system structures on ordered spaces, {\it Proc. of the LMS}, 102(1)(2011), 25-49. 



\bibitem{pedersen} G. K. Pedersen,
Pullback and Pushout Constructions in
C*-Algebra Theory, J. Functional Analysis 167, 243-344 (1999)

\bibitem{pisier_intr}
G. Pisier, {\it Introduction to Operator Space Theory}, {\rm
Cambridge University Press, 2003}


%\bibitem{Riesz} F. Riesz,
%\newblock Sur Quelques Notions Fondamentales Dans la Theorie Generale Des Operations Lineaires,
%\newblock \textit{Anns. of Math.} 41 (1940) 174-206

%\bibitem{SmithWard} R. R. Smith and J. D. Ward, Matrix ranges for Hilbert space operators, Amer. J. Math., 102 (1980)
%1031-1081.

\bibitem{Teller} J. R. Teller,
On Partially Ordered Groups Satisfying the Riesz Interpolation Property,
\textit{Proc. Amer. Math. Soc.}
Vol. 16, No. 6 (Dec., 1965), pp. 1392-1400

%\bibitem{Xabli} B. Xhabli, 
%The super operator system Structures and their applications in quantum entanglement theory, {\it J. Functional Analysis}
%V 262, Issue 4, (2012)
\end{thebibliography}
\end{document}